
\documentclass[11pt]{article} 
\usepackage[leqno]{amsmath}
\usepackage{amssymb,amsthm,url}
\usepackage[letterpaper,margin=1in]{geometry}
\usepackage{graphicx}
\usepackage{color}
\numberwithin{equation}{section}
\numberwithin{figure}{section}
\usepackage{enumitem}		
\usepackage{stmaryrd}
\usepackage{shuffle}
\usepackage{pict2e}
\usepackage{tikz}
\usetikzlibrary{arrows}
\usepackage{bbm}



\newlength{\lyxlabelwidth}      

\definecolor{GREEN}{RGB}{0, 180, 0}
\definecolor{BLUE}{RGB}{0, 0, 180}

\DeclareMathOperator{\Prob}{Prob}
\DeclareMathOperator{\Expect}{Expect}
\newcommand\f{\mathbf{f}}
\newcommand\fo{\overset{\approx}{\f}}
\newcommand\g{\mathbf{g}}
\newcommand\hatk{\check{K}}

\DeclareMathOperator{\End}{End}
\DeclareMathOperator{\id}{id}
\DeclareMathOperator{\Proj}{Proj} 
\DeclareMathOperator{\gr}{gr}
\DeclareMathOperator{\im}{im}
\DeclareMathOperator{\sspan}{span}

\newcommand\calsh{\mathcal{S}}

\DeclareMathOperator{\Anc}{Anc}

\DeclareMathOperator{\std}{std}

\newcommand\T{\mathbf{T}}

\newcommand\cppmap{\theta}
\newcommand\U{\mathbf{U}}
\newcommand\D{\mathbf{D}}

\DeclareMathOperator{\opt}{T} 
\DeclareMathOperator{\oper}{2R} 
\DeclareMathOperator{\opter}{T2R} 
\DeclareMathOperator{\opbinter}{BinT2R} 
\newcommand\ter{\opter}
\newcommand\trer{\opt r \oper}
\newcommand\binter{\opbinter}

\DeclareMathOperator{\opbintob}{BinT/B} 
\DeclareMathOperator{\optober}{T/B2R} 
\DeclareMathOperator{\optrintober}{TrinT/B2R} 
\newcommand\tober{\optober}
\newcommand\bintobrer{\opbintob r \oper}
\newcommand\trintober{\optrintober}

\DeclareMathOperator{\optab}{T+B} 
\DeclareMathOperator{\optaber}{T+B2R} 
\newcommand\taber{\optaber}
\newcommand\tabrer{\optab r \oper}

\newcommand\blp{\beta_{\lambda}^{P}}
\newcommand\bld{\beta_{\lambda}^{D}}

\newcommand\cale{\mathcal{E}}
\newcommand\calp{\mathcal{P}}
\newcommand\calu{\mathcal{U}} 

\newcommand\calh{\mathcal{H}}
\newcommand\calhn{\mathcal{H}_n}
\newcommand\calhdual{\mathcal{H}^*}
\newcommand\calhndual{\mathcal{H}_n^*}

\newcommand\calb{\mathcal{B}}
\newcommand\calbn{\mathcal{B}_n}
\newcommand\hatcalb{\check{\mathcal{B}}}

\newcommand\calbdual{\mathcal{B}^*}

\newcommand\fqsym{\mathbf{FQSym}}
\newcommand\sym{\mathbf{Sym}}
\newcommand\iprod{\cdot}
\newcommand\sn{\mathfrak{S}_n}

\newcommand\sk{\mathfrak{S}_{k}}
\newcommand\skj{\mathfrak{S}_{k-j}}
\newcommand\sj{\mathfrak{S}_j}

\date{}

\theoremstyle{plain}
\newtheorem{thm}[equation]{\protect\theoremname}
  \theoremstyle{definition}
  \newtheorem{defn}[equation]{\protect\definitionname}
  \theoremstyle{plain}
  \newtheorem{prop}[equation]{\protect\propositionname}
  \theoremstyle{plain}
  \newtheorem{cor}[equation]{\protect\corollaryname}
  \theoremstyle{plain}
  \newtheorem{lem}[equation]{\protect\lemmaname}
  \theoremstyle{remark}
  \newtheorem*{rem*}{\protect\remarkname}
  \theoremstyle{remark}
  \newtheorem*{rems*}{\protect\remarksname}
  \theoremstyle{definition}
  \newtheorem{example}[equation]{\protect\examplename}

\newcommand{\lrtreeone}{
\begin{picture}  (9,6) (-2,3)     
\put(2.5,9){\line(-1, -2){4}}
\put(0.5,5){\line(1, -2){2}}
\put(1.5,3){\line(-1, -2){1}}
\put(2.5,9){\line(1, -2){4}}
\put(2.5,9){\color{blue}\circle*{1}}
\put(0.5,5){\color{red}\circle*{1}}
\put(1.5,3){\color{green}\circle*{1}}
\end{picture}
}

\newcommand{\lrtreetwo}{
\begin{picture}  (9,6) (-2,3)     
\put(2.5,9){\line(-1, -2){4}}
\put(0.5,5){\line(1, -2){2}}
\put(2.5,9){\line(1, -2){4}}
\put(2.5,9){\color{blue}\circle*{1}}
\put(0.5,5){\color{green}\circle*{1}}
\end{picture}
}

\newcommand{\lrtreethree}{
\begin{picture}  (9,6) (-2,3)     
\put(2.5,9){\line(-1, -2){4}}
\put(0.5,5){\line(1, -2){2}}
\put(-0.5,3){\line(1, -2){1}}
\put(2.5,9){\line(1, -2){4}}
\put(2.5,9){\color{blue}\circle*{1}}
\put(0.5,5){\color{green}\circle*{1}}
\put(-0.5,3){\color{red}\circle*{1}}
\end{picture}
}
\newcommand{\lrtreefour}{
\begin{picture}  (9,6) (-2,3)    
\put(2.5,9){\line(-1, -2){4}}
\put(0.5,5){\line(1, -2){2}}
\put(1.5,3){\line(-1, -2){1}}
\put(2.5,9){\line(1, -2){4}}
\put(2.5,9){\color{blue}\circle*{1}}
\put(0.5,5){\color{green}\circle*{1}}
\put(1.5,3){\color{red}\circle*{1}}
\end{picture}
}
\newcommand{\lrtreefive}{
\begin{picture}  (9,6) (-2,3)     
\put(2.5,9){\line(-1, -2){4}}
\put(0.5,5){\line(1, -2){1.5}}
\put(4.5,5){\line(-1, -2){1.5}}
\put(2.5,9){\line(1, -2){4}}
\put(2.5,9){\color{blue}\circle*{1}}
\put(0.5,5){\color{green}\circle*{1}}
\put(4.5,5){\color{red}\circle*{1}}
\end{picture}
}

\newcommand{\firing}{
\begin{picture} (15,20) (-3,-9) 
\put(5,7){\circle*{1}} 
\put(5,8){\tiny boss} 
\put(5,7){\line(-3, -4){3}}
\put(2,3){\circle*{1}}
\put(0,3){\tiny A} 
\put(2,3){\line(0, -1){4}}
\put(2,-1){\circle*{1}}
\put(0,-1){\tiny B} 
\put(5,7){\line(3, -4){3}}
\put(8,3){\circle*{1}}
\put(9,3){{\color{blue}\tiny ?}} 
\put(8,3){\line(-3, -4){3}}
\put(5,-1){\circle*{1}}
\put(4.5,-3){\tiny D} 
\put(8,3){\line(0, -1){4}}
\put(8,-1){\circle*{1}}
\put(7.5,-3){\tiny E} 
\put(8,3){\line(3, -4){3}}
\put(11,-1){\circle*{1}}
\put(12,-1.5){\tiny F} 
\put(11,-1){\line(0, -1){3}}
\put(11,-4){\circle*{1}}
\put(12,-4.5){\tiny G} 
\end{picture}
}

\newcommand{\firingone}{
\begin{picture} (15,20) (-3,-9) 
\put(5,7){\circle*{1}} 
\put(5,8){\tiny boss} 
\put(5,7){\line(-3, -4){3}}
\put(2,3){\circle*{1}}
\put(0,3){\tiny A} 
\put(2,3){\line(0, -1){4}}
\put(2,-1){\circle*{1}}
\put(0,-1){\tiny B} 
\put(5,7){\line(3, -4){3}}
\put(8,3){\circle*{1}}
\put(9,3){{\color{blue}\tiny D}}  
\put(8,3){\line(0, -1){4}}
\put(8,-1){\circle*{1}}
\put(7.5,-3){\tiny E} 
\put(8,3){\line(3, -4){3}}
\put(11,-1){\circle*{1}}
\put(12,-1.5){\tiny F} 
\put(11,-1){\line(0, -1){3}}
\put(11,-4){\circle*{1}}
\put(12,-4.5){\tiny G} 
\end{picture}
}

\newcommand{\firingtwo}{
\begin{picture} (15,20) (-3,-9) 
\put(5,7){\circle*{1}} 
\put(5,8){\tiny boss} 
\put(5,7){\line(-3, -4){3}}
\put(2,3){\circle*{1}}
\put(0,3){\tiny A} 
\put(2,3){\line(0, -1){4}}
\put(2,-1){\circle*{1}}
\put(0,-1){\tiny B} 
\put(5,7){\line(3, -4){3}}
\put(8,3){\circle*{1}}
\put(9,3){{\color{blue}\tiny E}}  
\put(8,3){\line(-3, -4){3}}
\put(5,-1){\circle*{1}}
\put(4.5,-3){\tiny D} 
\put(8,3){\line(3, -4){3}}
\put(11,-1){\circle*{1}}
\put(12,-1.5){\tiny F} 
\put(11,-1){\line(0, -1){3}}
\put(11,-4){\circle*{1}}
\put(12,-4.5){\tiny G} 
\end{picture}
}

\newcommand{\firingthree}{
\begin{picture} (15,20) (-3,-9) 
\put(5,7){\circle*{1}} 
\put(5,8){\tiny boss} 
\put(5,7){\line(-3, -4){3}}
\put(2,3){\circle*{1}}
\put(0,3){\tiny A} 
\put(2,3){\line(0, -1){4}}
\put(2,-1){\circle*{1}}
\put(0,-1){\tiny B} 
\put(5,7){\line(3, -4){3}}
\put(8,3){\circle*{1}}
\put(9,3){{\color{blue}\tiny F}}  
\put(8,3){\line(-3, -4){3}}
\put(5,-1){\circle*{1}}
\put(4.5,-3){\tiny D} 
\put(8,3){\line(0, -1){4}}
\put(8,-1){\circle*{1}}
\put(7.5,-3){\tiny E} 
\put(8,3){\line(3, -4){3}}
\put(11,-1){\circle*{1}}
\put(12,-1.5){{\color{blue}\tiny ?}}  
\put(11,-1){\line(0, -1){3}}
\put(11,-4){\circle*{1}}
\put(12,-4.5){\tiny G} 
\end{picture}
}

\newcommand{\firingthreeb}{
\begin{picture} (15,20) (-3,-9) 
\put(5,7){\circle*{1}} 
\put(5,8){\tiny boss} 
\put(5,7){\line(-3, -4){3}}
\put(2,3){\circle*{1}}
\put(0,3){\tiny A} 
\put(2,3){\line(0, -1){4}}
\put(2,-1){\circle*{1}}
\put(0,-1){\tiny B} 
\put(5,7){\line(3, -4){3}}
\put(8,3){\circle*{1}}
\put(9,3){{\color{blue}\tiny F}}  
\put(8,3){\line(-3, -4){3}}
\put(5,-1){\circle*{1}}
\put(4.5,-3){\tiny D} 
\put(8,3){\line(0, -1){4}}
\put(8,-1){\circle*{1}}
\put(7.5,-3){\tiny E} 
\put(8,3){\line(3, -4){3}}
\put(11,-1){\circle*{1}}
\put(12,-1.5){{\color{blue}\tiny G}} 
\end{picture}
}

\newcommand{\firingfour}{
\begin{picture} (15,20) (-3,-9) 
\put(5,7){\circle*{1}} 
\put(5,8){\tiny boss} 
\put(5,7){\line(-3, -4){3}}
\put(2,3){\circle*{1}}
\put(0,3){\tiny A} 
\put(2,3){\line(0, -1){4}}
\put(2,-1){\circle*{1}}
\put(0,-1){\tiny B} 
\put(5,7){\line(3, -4){3}}
\put(8,3){\circle*{1}}
\put(9,3){{\color{blue}\tiny G}}  
\put(8,3){\line(-3, -4){3}}
\put(5,-1){\circle*{1}}
\put(4.5,-3){\tiny D} 
\put(8,3){\line(0, -1){4}}
\put(8,-1){\circle*{1}}
\put(7.5,-3){\tiny E} 
\put(8,3){\line(3, -4){3}}
\put(11,-1){\circle*{1}}
\put(12,-1.5){\tiny F} 
\end{picture}
}

\newcommand{\basisone}{
\begin{picture} (5,10) 
\put(2.5,9){\circle*{1}} 
\end{picture}
}

\newcommand{\basistwo}{
\begin{picture} (5,10) 
\put(2.5,9){\circle*{1}} 
\put(2.5,9){\line(-1, -2){2}}
\put(0.5,5){\circle*{1}}
\end{picture}
}

\newcommand{\basisthree}{
\begin{picture} (5,10) 
\put(2.5,9){\circle*{1}} 
\put(2.5,9){\line(1, -2){2}}
\put(4.5,5){\circle*{1}}
\end{picture}
}

\newcommand{\basisfour}{
\begin{picture} (5,10) 
\put(2.5,9){\circle*{1}} 
\put(2.5,9){\line(1, -2){2}}
\put(4.5,5){\circle*{1}}
\put(4.5,5){\line(0, -1){3}}
\put(4.5,2){\circle*{1}}
\end{picture}
}

\newcommand{\basisfive}{
\begin{picture} (5,10) 
\put(2.5,9){\circle*{1}} 
\put(2.5,9){\line(-1, -2){2}}
\put(0.5,5){\circle*{1}}
\put(2.5,9){\line(1, -2){2}}
\put(4.5,5){\circle*{1}}
\end{picture}
}

\newcommand{\basissix}{
\begin{picture} (5,10) 
\put(2.5,9){\circle*{1}} 
\put(2.5,9){\line(-1, -2){2}}
\put(0.5,5){\circle*{1}}
\put(2.5,9){\line(1, -2){2}}
\put(4.5,5){\circle*{1}}
\put(4.5,5){\line(0, -1){3}}
\put(4.5,2){\circle*{1}}
\end{picture}
}

\newcommand{\treeex}{
\begin{picture} (12,6) 
\put(5,7){\circle*{1}} 
\put(5,7){\line(-3, -4){3}}
\put(2,3){\circle*{1}}
\put(2,3){\line(0, -1){4}}
\put(2,-1){\circle*{1}}
\put(5,7){\line(3, -4){3}}
\put(8,3){\circle*{1}}
\put(8,3){\line(-3, -4){3}}
\put(5,-1){\circle*{1}}
\put(8,3){\line(0, -1){4}}
\put(8,-1){\circle*{1}}
\put(8,3){\line(3, -4){3}}
\put(11,-1){\circle*{1}}
\put(11,-1){\line(0, -1){3}}
\put(11,-4){\circle*{1}}
\end{picture}
}

\newcommand{\coprodone}{
\begin{picture} (12,6) 
\put(5,7){\circle*{1}} 
\put(5,7){\line(-3, -4){3}}
\put(2,3){\circle*{1}}
\put(5,7){\line(3, -4){3}}
\put(8,3){\circle*{1}}
\put(8,3){\line(-3, -4){3}}
\put(5,-1){\circle*{1}}
\put(8,3){\line(0, -1){4}}
\put(8,-1){\circle*{1}}
\put(8,3){\line(3, -4){3}}
\put(11,-1){\circle*{1}}
\put(11,-1){\line(0, -1){3}}
\put(11,-4){\circle*{1}}
\end{picture}
}

\newcommand{\coprodtwo}{
\begin{picture} (12,6) 
\put(5,7){\circle*{1}} 
\put(5,7){\line(-3, -4){3}}
\put(2,3){\circle*{1}}
\put(2,3){\line(0, -1){4}}
\put(2,-1){\circle*{1}}
\put(5,7){\line(3, -4){3}}
\put(8,3){\circle*{1}}
\put(8,3){\line(0, -1){4}}
\put(8,-1){\circle*{1}}
\put(8,3){\line(3, -4){3}}
\put(11,-1){\circle*{1}}
\put(11,-1){\line(0, -1){3}}
\put(11,-4){\circle*{1}}
\end{picture}
}

\newcommand{\coprodthree}{
\begin{picture} (12,6) 
\put(5,7){\circle*{1}} 
\put(5,7){\line(-3, -4){3}}
\put(2,3){\circle*{1}}
\put(2,3){\line(0, -1){4}}
\put(2,-1){\circle*{1}}
\put(5,7){\line(3, -4){3}}
\put(8,3){\circle*{1}}
\put(8,3){\line(-3, -4){3}}
\put(5,-1){\circle*{1}}
\put(8,3){\line(3, -4){3}}
\put(11,-1){\circle*{1}}
\put(11,-1){\line(0, -1){3}}
\put(11,-4){\circle*{1}}
\end{picture}
}

\newcommand{\coprodfour}{
\begin{picture} (12,6) 
\put(5,7){\circle*{1}} 
\put(5,7){\line(-3, -4){3}}
\put(2,3){\circle*{1}}
\put(2,3){\line(0, -1){4}}
\put(2,-1){\circle*{1}}
\put(5,7){\line(3, -4){3}}
\put(8,3){\circle*{1}}
\put(8,3){\line(-3, -4){3}}
\put(5,-1){\circle*{1}}
\put(8,3){\line(0, -1){4}}
\put(8,-1){\circle*{1}}
\put(8,3){\line(3, -4){3}}
\put(11,-1){\circle*{1}}
\end{picture}
}

\newcommand{\hookwalk}{
\begin{picture} (15,20) (-3,-9) 
\put(5,7){\circle*{1}} 
\put(5,7){\line(-3, -4){3}}
\put(2,3){\circle*{1}}
\put(2,3){\line(0, -1){4}}
\put(2,-1){\circle*{1}}
\put(5,7){\line(3, -4){3}}
\put(8,3){\circle*{1}}
\put(9,3){v} 
\put(8,3){\line(-3, -4){3}}
\put(5,-1){{\color{blue}\circle*{1}}}
\put(8,3){\line(0, -1){4}}
\put(8,-1){{\color{blue}\circle*{1}}}
\put(8,3){\line(3, -4){3}}
\put(11,-1){{\color{blue}\circle*{1}}}
\put(11,-1){\line(0, -1){3}}
\put(11,-4){{\color{blue}\circle*{1}}}
\end{picture}
}

\newcommand{\hookwalkone}{
\begin{picture} (15,20) (-3,-9) 
\put(5,7){\circle*{1}} 
\put(5,7){\line(-3, -4){3}}
\put(2,3){\circle*{1}}
\put(2,3){\line(0, -1){4}}
\put(2,-1){\circle*{1}}
\put(5,7){\line(3, -4){3}}
\put(8,3){\circle*{1}}
\put(8,3){\line(-3, -4){3}}
\put(5,-1){\circle*{1}}
\put(4.2,-3.2){v}
\put(8,3){\line(0, -1){4}}
\put(8,-1){\circle*{1}}
\put(8,3){\line(3, -4){3}}
\put(11,-1){\circle*{1}}
\put(11,-1){\line(0, -1){3}}
\put(11,-4){\circle*{1}}
\end{picture}
}

\newcommand{\hookwalktwo}{
\begin{picture} (15,20) (-3,-9) 
\put(5,7){\circle*{1}} 
\put(5,7){\line(-3, -4){3}}
\put(2,3){\circle*{1}}
\put(2,3){\line(0, -1){4}}
\put(2,-1){\circle*{1}}
\put(5,7){\line(3, -4){3}}
\put(8,3){\circle*{1}}
\put(8,3){\line(-3, -4){3}}
\put(5,-1){\circle*{1}}
\put(7.2,-3.2){v} 
\put(8,3){\line(0, -1){4}}
\put(8,-1){\circle*{1}}
\put(8,3){\line(3, -4){3}}
\put(11,-1){\circle*{1}}
\put(11,-1){\line(0, -1){3}}
\put(11,-4){\circle*{1}}
\end{picture}
}

\newcommand{\hookwalkthree}{
\begin{picture} (15,20) (-3,-9) 
\put(5,7){\circle*{1}} 
\put(5,7){\line(-3, -4){3}}
\put(2,3){\circle*{1}}
\put(2,3){\line(0, -1){4}}
\put(2,-1){\circle*{1}}
\put(5,7){\line(3, -4){3}}
\put(8,3){\circle*{1}}
\put(8,3){\line(-3, -4){3}}
\put(5,-1){\circle*{1}}
\put(8,3){\line(0, -1){4}}
\put(8,-1){\circle*{1}}
\put(8,3){\line(3, -4){3}}
\put(11,-1){\circle*{1}}
\put(12,-1.5){v}
\put(11,-1){\line(0, -1){3}}
\put(11,-4){\circle*{1}}
\end{picture}
}

\newcommand{\hookwalkfour}{
\begin{picture} (15,20) (-3,-9) 
\put(5,7){\circle*{1}} 
\put(5,7){\line(-3, -4){3}}
\put(2,3){\circle*{1}}
\put(2,3){\line(0, -1){4}}
\put(2,-1){\circle*{1}}
\put(5,7){\line(3, -4){3}}
\put(8,3){\circle*{1}}
\put(8,3){\line(-3, -4){3}}
\put(5,-1){\circle*{1}}
\put(8,3){\line(0, -1){4}}
\put(8,-1){\circle*{1}}
\put(8,3){\line(3, -4){3}}
\put(11,-1){\circle*{1}}
\put(11,-1){\line(0, -1){3}}
\put(11,-4){\circle*{1}}
\put(12,-4.5){v} 
\end{picture}
}

\setlength{\unitlength}{1mm}

%
{\settowidth{\lyxlabelwidth}{#2}
\begin{description}[font=\normalfont,style=sameline,
leftmargin=\lyxlabelwidth,#1]}
{\end{description}}
  \providecommand{\remarksname}{Remarks}

\makeatother

  \providecommand{\corollaryname}{Corollary}
  \providecommand{\definitionname}{Definition}
  \providecommand{\examplename}{Example}
  \providecommand{\lemmaname}{Lemma}
  \providecommand{\propositionname}{Proposition}
  \providecommand{\remarkname}{Remark}
\providecommand{\theoremname}{Theorem}

\begin{document}

\title{Markov Chains from Descent Operators on Combinatorial Hopf Algebras}

\author{C.Y. Amy Pang%
\thanks{amypang@hkbu.edu.hk%
}}

\maketitle
\maketitle
\begin{abstract}
We develop a general theory for Markov chains whose transition probabilities
are the coefficients of descent operators on combinatorial Hopf algebras.
These model the breaking-then-recombining of combinational objects.
Examples include the various card-shuffles of Diaconis, Fill and Pitman,
Fulman's restriction-then-induction chains on the representations
of the symmetric group, and a plethora of new chains on trees, partitions
and permutations. The eigenvalues of these chains can be calculated
in a uniform manner using Hopf algebra structure theory, and there
is a simple expression for their stationary distributions. For an
important subclass of chains analogous to the top-to-random shuffle,
we derive a full right eigenbasis, from which follow exact expressions
for expectations of certain statistics of interest. This greatly generalises
the coproduct-then-product chains previously studied in joint work
with Persi Diaconis and Arun Ram.
\end{abstract}

\section{Introduction}

There has long been interest in using algebra and combinatorics to
study Markov chains \cite{paseplift,sandpile,randomtorandom}. One
highly successful technique is the theory of random walks on groups
\cite{randomwalksongroups,randomwalksongroups2} and on monoids \cite{hyperplanewalk,lrb,rtrivialmonoids}.
The transition matrix of one of these chains is the matrix for a multiplication
map in the group algebra or the monoid algebra. The eigenvalues, and
sometimes eigenfunctions, of the transition matrix can be calculated
in terms of the representation theory of the group or monoid. Such
algebraic data has implications for the long term behaviour of the
chain, such as its stationary distribution and convergence rate.

The purpose of this paper is to execute similar eigen-data analysis
when the transition matrix comes from a \emph{descent operator} on
a \emph{combinatorial Hopf algebra}, instead of from multiplication
in a group or monoid. The chains describe the breaking, or breaking-then-recombining,
of some combinatorial object, such as a graph or a tree. To illustrate
the general theory, this paper will concentrate on the following two
examples.
\begin{example}[A chain on organisational structures]
\label{ex:rooted-trees} A company starts with $n_{0}$ employees
in a tree structure, where each employee except the boss has exactly
one direct superior. The boss is the direct superior of the department
heads. For example, Figure \ref{fig:cktrees-company} is a company
with $n_{0}=8$. It has two departments: A heads the accounting department,
consisting of himself and B, and C heads the consulting department,
of C, D, E, F and G. And C is the direct superior to D, E and F.

Let $q$ be a parameter between 0 and 1. The monthly performance of
each employee is, independently, uniformly distributed between 0 and
1, and each month all employees with performance below $1-q$ are
fired. Each firing causes a cascade of promotions; for the specifics,
see the third paragraph of Section \ref{sec:cktrees} and Figure \ref{fig:cktrees-firing}.
The chain keeps track of the tree structure of the company, but does
not see which employee is taking which position. 

It is clear that the stationary distribution of the chain is concentrated
at the company consisting of only the boss. The eigenvalues of this
chain are 1 and $q^{n'}$, for $2\leq n'\leq n_{0}$, and their multiplicities
are the number of ``connected'' subsets of size $n'$ of the starting
tree containing the boss. (So, for the chain starting at the tree
in Figure \ref{fig:cktrees-company}, the eigenvalue $q^{3}$ has
multiplicity 5, corresponding to the subsets of the boss with A and
B, with A and C, with C and D, with C and E, and with C and F.) Theorem
\ref{thm:spectrum-cktrees}.iii gives a formula for a full right eigenbasis
of this chain. One application of these expressions is as follows:
suppose the company in Figure \ref{fig:cktrees-company} has a project
that requires $s_{1}$ accountants and $s_{2}$ consultants. The expected
number of such teams falls roughly by a factor of $q^{1+s_{1}+s_{2}}$
monthly (see Corollary \ref{cor:expectation-cktrees} for the precise
statement). Such results are obtained by relating this chain to a
decorated version of the Connes-Kreimer Hopf algebra of trees. 
\end{example}
\begin{figure}
\setlength{\unitlength}{2mm} \begin{center} \begin{picture} (15,12) (-3,-5)   \put(5,7){\circle*{1}}  \put(5,8){boss}  \put(5,7){\line(-3, -4){3}} \put(2,3){\circle*{1}} \put(0,3){A}  \put(2,3){\line(0, -1){4}} \put(2,-1){\circle*{1}} \put(0,-1){B}  \put(5,7){\line(3, -4){3}} \put(8,3){\circle*{1}} \put(9,3){C}  \put(8,3){\line(-3, -4){3}} \put(5,-1){\circle*{1}} \put(4.5,-3){D}  \put(8,3){\line(0, -1){4}} \put(8,-1){\circle*{1}} \put(7.5,-3){E}  \put(8,3){\line(3, -4){3}} \put(11,-1){\circle*{1}} \put(12,-1.5){F}  \put(11,-1){\line(0, -1){3}} \put(11,-4){\circle*{1}} \put(12,-4.5){G}  \put(-3,-5){$ \underbrace{\hspace{13mm}}_\text{accounting} $}  \put(5,-5){$ \underbrace{\hspace{13mm}}_\text{consulting} $}  \end{picture} \end{center}\protect\caption{\label{fig:cktrees-company}A company with a tree structure}
\end{figure}
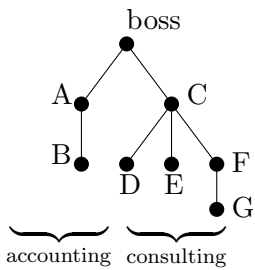

\begin{example}[Relative time on a to-do list]
 \label{ex:fqsym} You keep an electronic to-do list of $n$ tasks.
Each day, you complete the task at the top of the list, and are handed
a new task, which you add to the list in a position depending on its
urgency (more urgent tasks are placed closer to the top). Assume the
incoming tasks are equally distributed in urgency relative to the
$n-1$ tasks presently on the list, so they are each inserted into
the list in a uniform position. To produce from this process a Markov
chain on permutations, relabel the tasks by $\{1,\dots,n\}$ at the
end of each day so that 1 indicates the newest addition to the list,
2 indicates the next newest task, i.e. the newest amonst the $n-1$
tasks not labelled 1, and so on, so that $n$ is the task spending
the longest time on the list. 

Figure \ref{fig:todoexample} shows a possible trajectory of this
chain over three days. On the first day, the top task, labelled 3,
is completed, and tasks 1, 5, 4, 2 remain, in that order. In this
example, the random urgency of the new incoming task places it in
position 3. This incoming task is now the task spending the shortest
time on the list, so it is relabelled 1. The task spending the next
shortest time on the list is that previously labelled 1, and its label
is now changed to 2. Similarly, the task spending the third-shortest
time on the list is that previously labelled 2, and its label is now
changed to 3, and so on. The next two days are similar.

\begin{figure}
\def\arraystretch{1}
\begin{center}
\begin{tikzpicture} 
\node (A) at (0,0) {$\begin{matrix}3\\1\\5\\4\\2\end{matrix}$}; 
\node (B) at (4,0)  {$\begin{matrix}2\\5\\1\\4\\3\end{matrix}$}; 
\node (C) at (8,0)  {$\begin{matrix}5\\2\\4\\1\\3\end{matrix}$};  
\node (D) at (12,0) {$\begin{matrix}1\\3\\5\\2\\4\end{matrix}$}; 
\node (E) at (2,-2) {$\begin{matrix}1\\5\\ \rightarrow \hspace{5mm}\\4\\2\end{matrix}$}; 
\node (F) at (6,-2)  {$\begin{matrix}5\\1\\4\\ \rightarrow \hspace{5mm}\\3\end{matrix}$}; 
\node (G) at (10,-2)  {$\begin{matrix}\rightarrow \hspace{5mm}\\ 2\\5\\3\\4\end{matrix}$}; 
\draw[->] (A) -- (E); 
\draw[->] (E) -- (B); 
\draw[->] (B) -- (F); 
\draw[->] (F) -- (C); 
\draw[->] (C) -- (G); 
\draw[->] (G) -- (D); 
\end{tikzpicture}
\par\end{center}

\protect\caption{\label{fig:todoexample} A possible three-day trajectory of the Markov
chain of ``relative time on a to-do list'', for $n=5$. The horizontal
arrows indicate the (random) positions of incoming tasks.}

\end{figure}
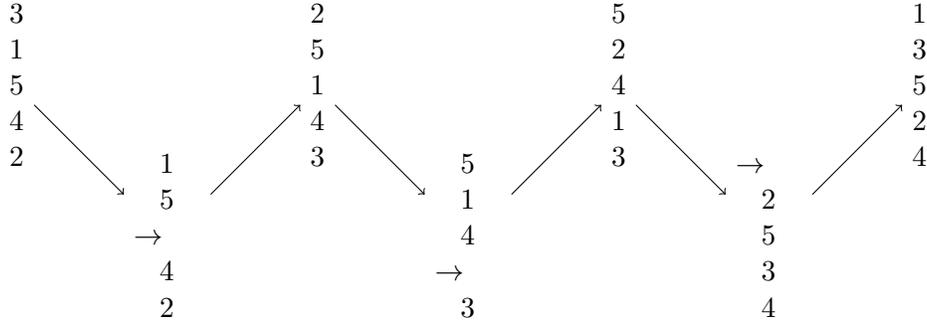

The stationary distribution of this chain is the uniform distribution.
Its eigenvalues are $0,\frac{1}{n},\frac{2}{n},\dots,\frac{n-2}{n},1$,
and the multiplicity of $\frac{j}{n}$ is $(n-j)!-(n-j-1)!$, the
number of permutations of $n$ objects fixing pointwise $1,2,\dots,j$
but not $j+1$. Theorem \ref{thm:spectrum-fqsym} gives a full right
eigenbasis indexed by such permutations. One consequence of such eigenvectors
(Corollary \ref{cor:probestimate-fqsym}) is as follows: assume the
tasks started in ascending order (i.e. newest at the top and oldest
at the bottom). After $t$ days, consider the $n-j$ tasks at the
bottommost positions of the list. The position of the newest task
among these is distributed as follows: 
\begin{align*}
\mbox{position }j+1 & \mbox{ with probability }\frac{1}{n-j}\left(1+\frac{j}{n}(n-j-1)\right);\\
\mbox{positions }j+2,j+3,\dots,n & \mbox{ each with probability }\frac{1}{n-j}\left(1+\frac{j}{n}\right).
\end{align*}

Remarkably, the above formula also gives the distribution of the card
of smallest value after $t$ top-to-random shuffles of \cite{toptorandom}
- this is because the probability distribution of a deck under top-to-random
shuffling, after $t$ steps starting at the identity permutation,
agrees with those of the to-do list chain \cite[Th. 3.14]{hopfchainlift}.
(Note however that the trajectories for the two chains do not correspond.)
So the new to-do list chain opens up a whole new viewpoint to study
the top-to-random shuffle. \cite[Th. 3.11]{hopfchainlift} also relates
the to-do list chain to the restriction-then-induction chains of Fulman
\cite{jasonrepdownupchains} (see below), namely that observing the
RSK shape of the to-do list chain gives the restriction-then-induction
chain. 
\end{example}
The descent operators $m\Delta_{P}$ are variants of the coproduct
$\Delta$ followed by the product $m$, indexed by a probability distribution
$P$ on compositions. In terms of the associated Markov chains, $P$
is the distribution of the piece sizes in the breaking step. For example,
when $P$ is concentrated at the composition $(1,n-1)$, the chain
models removing then reattaching a piece of size 1, analogous to the
``top-to-random'' card-shuffle. The associated descent operator
is $\ter_{n}:=\frac{1}{n}m(\Proj_{1}\otimes\id)\Delta$, where $\Proj_{1}$
denotes projection to the degree 1 subspace. The case where $P$ is
a multinomial distribution was covered in \cite{hopfpowerchains};
the present paper is a vast extension of that framework. As in \cite{hopfpowerchains},
all our results rely heavily on the structure theory of cocommutative
Hopf algebras; however, unlike \cite{hopfpowerchains}, some basic
Markov chain theory plays an essential part in the proofs of entirely
algebraic statements (Theorems \ref{thm:evalues}, \ref{thm:tob2r-evectors-cocommutative}
and \ref{thm:tab2r-evectors-cocommutative}).

This paper is organised as follows: Section \ref{sec:Background}
skims the necessary probabilistic and algebraic background, including
much non-standard, probability-inspired notation regarding descent
operators. Section \ref{sec:Markov-Chains-from-descent-operators}
finds the eigenvalues, multiplicities, and stationary distributions
of all descent operator Markov chains. Section \ref{sec:partsof1}
proves an eigenbasis formula for the chains driven by $\ter_{n}$
and related maps, and establishes a ``recursive lumping'' property
for such chains. Sections \ref{sec:cktrees} and \ref{sec:fqsym}
applies these general results to, respectively, the decorated Connes-Kreimer
Hopf algebra of trees and the Malvenuto-Reutenauer algebra of permutations
to obtain the results of Examples \ref{ex:rooted-trees} and \ref{ex:fqsym}
above. 

The main results of this paper were announced in the extended abstract
\cite{cpmcfpsac}.

Below are some vignettes of $\ter_{n}$ chains on various Hopf algebras,
to demonstrate the diverse range of chains that this framework covers.
Note that the long term behaviour varies greatly depending on the
properties of the associated Hopf algebra. 
\begin{description}
\item [{Card-shuffling}] The descent operators $m\Delta_{P}$ on the shuffle
algebra (see Example \ref{ex:shufflealg-bg}) induce precisely the
``cut and interleave'' shuffles of \cite{cppriffleshuffle}: cut
the deck according to the distribution $P$, then drop the cards one
by one from the bottom of piles chosen with probability proportional
to pile size. In particular, $\ter_{n}$ corresponds to the much-studied
``top-to-random'' shuffle \cite{toptorandom,entropyter}: take the
top card off the deck, then re-insert it at a uniformly chosen position.
(The time-reversal of this chain is the very important Tsetlin library
model \cite{tsetlin} of storage allocation, see \cite{tsetlinlibrary}
for an overview of the many results on this chain and pointers to
diverse applications.) A multi-graded version of Theorem \ref{thm:evalues}
recovers (the unweighted case of) \cite{randomtotopevalues} on the
spectrum of the top-to-random shuffle: if all $n$ cards in the deck
are distinct, then the multiplicity of the eigenvalue $\frac{j}{n}$
is the number of permutations of $n$ objects with $j$ fixed points.
From the eigenbasis formula of Theorem \ref{thm:tob2r-evectors-cocommutative},
one obtains explicit expectation formulae for a variety of ``pattern
functions'' on the bottom $n-j$ cards of the deck. For example,
the case $n-j=2$ gives the following analogue of \cite[Ex. 5.8]{hopfpowerchains}:
after $t$ top-to-random shuffles of a deck of $n$ distinct cards,
starting in ascending order, the probability that the bottommost card
has lower value than the card immediately above is $\left(1-\left(\frac{n-2}{n}\right)^{t}\right)\frac{1}{2}$. 
\item [{Restriction-then-induction,~or~box~moves}] \label{ex:snhookwalk}
The descent operators on the Schur basis of the algebra of symmetric
functions \cite[Sec. 7.10]{stanleyec2} induce the chains of \cite{jasonrepdownupchains},
on the irreducible representations of the symmetric group $\sn$.
In the case of $\ter_{n}$, this chain is: restrict to $\mathfrak{S}_{n-1}$,
induce back to $\sn$, and pick an irreducible constituent with probability
proportional to the dimension of its isotypic component. Because the
Littlewood-Richardson coefficients involved in this case are particularly
simple, this chain has a neat interpretation in terms of partition
diagrams: remove a random box using the hook walk of \cite{hookwalk},
then add a random box according to the complementary hook walk of
\cite{hookwalk2}. See \cite[Sec. 3.2.1]{hopfchainlift} for a detailed
derivation.

The unique stationary distribution of this chain is the ubiquitous
Plancherel measure $\pi(x)=\frac{(\dim x)^{2}}{n!}$. The eigenfunctions
show that, after $t$ moves, the expected character ratio $\frac{\chi(\sigma)}{\chi(\id)}$
on an $n'$-cycle $\sigma$ is $\left(\frac{n-n'}{n}\right)^{t}$
times its initial value. \cite{randomcharratios} uses similar results
to study central limit theorems for character ratios.

\item [{Rock-breaking~and~coupon~collection}] \label{ex:rockchipping}
Work in the elementary basis of the algebra of symmetric functions.
The states of the descent operator chains are partitions, viewed as
a multiset of integers recording the sizes of a collection of rocks.
\cite[Sec. 4]{hopfpowerchains} analysed the chain where each rock
breaks independently according to the binomial distribution. The $\ter_{n}$
chain is as follows: pick a rock in the collection with probability
proportional to its size, and chip off a piece of size 1 from this
rock. As noted in \cite{couponcollectioncompletegraphs}, starting
the $\ter_{n}$ chain from one single rock gives a rephrasing of the
classical coupon-collection problem: the number of chipped-off pieces
correspond to the different coupons collected, and the size of the
large rock corresponds to the number of uncollected coupons.

It is clear that both binomial-breaking and chipping have the same
stationary distribution - concentrated at rocks all of size 1. Indeed,
because the algebra of symmetric functions is both commutative and
cocommutative, both chains have a common right eigenbasis. One consequence
for the $\ter_{n}$ chain (analogous to \cite[Cor. 4.10]{hopfpowerchains})
reads: after $t$ steps starting from $\lambda$, the probability
that there remains a rock of size at least $n'>1$, is at most $\left(\frac{|\lambda|-n'}{|\lambda|}\right)^{t}\sum_{i}\binom{\lambda_{i}}{n'}$.
Or, in the coupon-collection formulation, the probability of still
needing $n'$ or more of the $n$ different coupons is at most $\left(\frac{n-n'}{n}\right)^{t}\binom{n}{n'}$.

\item [{Mining}] \label{ex:subwordcxs} The operator $\ter_{n}$ on the
Hopf algebra of subword complexes \cite{subwordcxs} gives rise to
an intriguing variant of rock-chipping, as sketched in \cite[App. B]{subwordcxs}.
Here is a simplified version which sprouted from discussion with those
authors. The states of this chain are collections of ``gems'', where
each gem is a distinct direction in $\mathbb{R}^{n}$ (or an element
of the projective space $\mathbb{P}^{n}$). The linear dependences
between these directions indicate which gems are ``entangled'' in
rock and must be ``mined''. At each time step, pick a gem $v$ in
the collection to mine, and pick a complementary subspace $W$ spanned
by some other gems in the collection. Then remove from the collection
all gems which are not in $v\cup W$. (The probability of selecting
the ``2-flat decomposition'' $(v,W)$ is proportional to the number
of bases of $\mathbb{R}^{n}$ using the gems in $v\cup W$ - we aim
to find a more natural interpretation in future work.)

This chain is absorbing at any state with $n$ linearly independent
gems, and there are usually multiple such states. For example, starting
with three non-collinear gems in $\mathbb{R}^{2}$, any pair of them
is a possible absorbing state. This is the first instance of a descent
operator Markov chain with multiple absorbing states, which merits
further investigation.

\item [{Phylogenetic~trees}] \label{ex:planarbinarytrees} Following \cite{phylogenetictreesusan},
a phylogenetic tree is a rooted tree on labelled vertices, recording
the ancestry connections of several biological species. Since much
ancestry data is conjectural, it is natural to consider random walks
on the set of such trees. In the simplest models, each species has
at most two descendants, and this can be represented by complete binary
trees (each internal vertex has exactly two children, and only internal
vertices are labelled). If, in addition, left and right children are
distinguished (i.e. the tree is planar), then these trees form a basis
for (a labelled version of) $YSym$, the graded dual to the Loday-Ronco
Hopf algebra \cite{lodayroncotrees,lrstructure}.

The $\ter_{n}$ chain on this algebra is on such trees with $n$ internal
vertices: remove the leftmost leaf and combine its parent with its
sibling, keeping the label of the sibling. Next, uniformly choose
a leaf, give it the label of the parent, and add to it two children.
This is a variant of the chain of \cite{treechainaldous,treechainaldous2},
where we have restricted the edge contraction and insertion operations
to specific edges. Figure \ref{fig:lrtree-chain} demonstrates the
possible moves from a particular state in the case $n=3$.

The unique stationary distribution is the number of ways to label
the vertices with $\{1,\dots,n\}$, each label occuring once, such
that a parent has a smaller label than both its children. Since this
algebra is not commutative, Theorem \ref{thm:tob2r-evectors-cocommutative}
provides many right eigenfunctions but not a complete basis. \cite{lodayroncochains}
proved using one eigenfunction that, after $t$ steps, the probability
of the root having no right children is bounded above by $\left(1+\left(\frac{n-2}{n}\right)^{t}\right)\frac{1}{2}$.

Since $YSym$ is a quotient of the Malvenuto-Reutenauer Hopf algebra
of permutations via taking the decreasing tree, by \cite[Th. 3.6]{hopfchainlift},
the unlabelled version of this chain on trees precisely records the
decreasing trees for permutations under the to-do list chain of Example
\ref{ex:fqsym} / Section \ref{sec:fqsym}. 

\end{description}
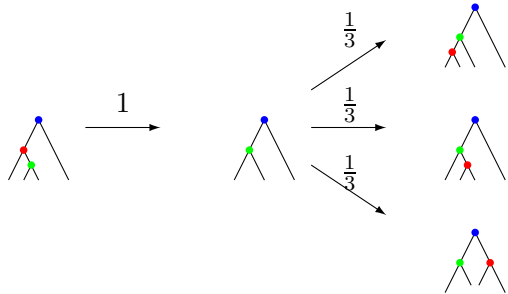
\begin{figure}
\setlength{\unitlength}{1mm} \begin{center} \begin{picture} (58,45) (5,180)  \put(0,190){\lrtreeone} \put(12,195){\vector(1, 0){10}} \put(16,197){$1$} \put(30,190){\lrtreetwo} \put(42,200){\vector(3, 2){10}} \put(46,207){$\frac{1}{3}$} \put(58,205){\lrtreethree} \put(42,195){\vector(1, 0){10}} \put(46,197){$\frac{1}{3}$} \put(58,190){\lrtreefour} \put(42,190){\vector(3, -2){10}} \put(46,188){$\frac{1}{3}$} \put(58,175){\lrtreefive} \end{picture} \end{center}\protect\caption{\label{fig:lrtree-chain}One step of the $\ter_{3}$ chain on phylogenetic
trees }
\end{figure}

Acknowledgements: The author would like to thank Persi Diaconis,
Franco Saliola and Nantel Bergeron for much help throughout this project,
both with the mathematics and the exposition. Thank you also to Sami
Assaf, Thomas Lam, Jean-Christophe Novelli and Nathan Williams for
inspiring this line of research, and to Marcelo Aguiar, Francois Bergeron,
Cesar Ceballos, Alex Chan, Eric Chan, Darij Grinberg, Zachary Hamaker,
Matthieu Josuat-Verges, Servando Pineda and Stephanie van Willengenburg
for many good ideas and helpful discussions. Thanks to Ian Grojnowski
for his help with the first proof of Theorem \ref{thm:evalues}, and
to the anonymous referee of \cite{cpmcfpsac} for supplying most of
the second proof of the same theorem. Some of the ideas in Section
\ref{sec:cktrees} arose in discussion with Daniel Ahlberg, Amir Dembo,
Lea Popovic, Amir Sepehri, Yannic Vargas and James Zhao.

Since this is the author's main piece of work during her postdoctoral
stay in Montreal, this seems a good place to thank all the members
and visitors of LaCIM, and her colleagues and students at McGill,
for an unimaginably wonderful two years. A special mention goes to
Mathieu Guay-Paquet and Franco Saliola for their invaluable advice
and support, both mathematical and personal.

\section{Background and Notation\label{sec:Background}}

Since this paper is intended for multiple audiences, this section
provides both probabilistic (Sections \ref{sub:Markov-chain-background}
and \ref{sub:doobtransform}) and algebraic (Sections \ref{sub:cha-background}
and \ref{sub:Descent-operators-background}) preliminaries. Two points
are of particular interest: Section \ref{sub:doobtransform} outlines
an obscure use of the Doob transform to create transition matrices
out of non-negative matrices, which underlies the construction of
the descent operator chains. Section \ref{sub:Descent-operators-background}
is a non-standard treatment of the descent operators, in order to
facilitate our new probabilistic connections. Readers familiar with
these operators are encouraged to skim this section nonetheless.

\subsection{Linear algebra notation\label{sub:Linear-algebra-background}}

Given a matrix $K$, let $K(x,y)$ denote its entry in row $x$, column
$y$, and write $K^{T}$ for the transpose of $K$, so $K^{T}(x,y)=K(y,x)$.

For a vector space $V$ with basis $\calb$, and a linear map $\T:V\rightarrow V$
, write $\left[\T\right]_{\calb}$ for the matrix of $\T$ with respect
to $\calb$, satisfying
\[
\T(x)=\sum_{y\in\calb}\left[\T\right]_{\calb}(y,x)y.
\]
$V^{*}$ is the \emph{dual vector space} to $V$, the set of linear
functions from $V$ to $\mathbb{R}$. (Because of the probability
applications, take $\mathbb{R}$ to be the ground field of all vector
spaces.) Its natural basis is $\calbdual:=\left\{ x^{*}|x\in\calb\right\} $,
where $x^{*}$ satisfies $x^{*}(x)=1$, $x^{*}(y)=0$ for all $y\in\calb$,
$y\neq x$. The \emph{dual map} to $\T:V\rightarrow V$ is the linear
map $\T^{*}:V^{*}\rightarrow V^{*}$ satisfying $(\T^{*}f)(v)=f(\T v)$
for all $v\in V,f\in V^{*}$. Dualising a linear map is equivalent
to transposing its matrix: $\left[\T^{*}\right]_{\calbdual}=\left[\T\right]_{\calb}^{T}$.

\subsection{Markov chains\label{sub:Markov-chain-background}}

All Markov chains in this work are in discrete time, are time-homogeneous
and have a finite state space $\Omega$. Hence they are each described
by an $|\Omega|$-by-$|\Omega|$ \emph{transition matrix} $K$. We
follow the probability community's convention that the rows index
the source state and the columns index the destination state, so the
probability of moving from $x$ to $y$ is $K(x,y)$. (Combinatorialists
sometimes take the opposite convention, for example \cite{rtrivialmonoids}.)
Note that a matrix $K$ specifies a Markov chain in this manner if
and only if $K(x,y)\geq0$ for all $x,y\in\Omega$, and $\sum_{y\in\Omega}K(x,y)=1$
for each $x\in\Omega$. We refer the reader to \cite{markovmixing}
for the basics of Markov chain theory.

This paper is primarily interested in the \emph{stationary distributions}
of a Markov chain, and its \emph{left and right eigenfunctions}. These
are functions $\pi,\f,\g:\Omega\rightarrow\mathbb{R}$ satisfying
respectively 
\[
\sum_{x\in\Omega}\pi(x)K(x,y)=\pi(y),\quad\sum_{x\in\Omega}\g(x)K(x,y)=\beta\g(y),\quad\sum_{y\in\Omega}K(x,y)\f(y)=\beta\f(x).
\]
(For brevity, we will occasionally write $\beta$-eigenfunction to
mean that the eigenvalue is $\beta$, and similarly for eigenvectors
of linear maps.) So the stationary distributions are precisely the
distributions that are left 1-eigenfunctions. \cite[Sec. 2.1]{hopfpowerchains}
lists many applications of both left and right eigenfunctions; this
work will concentrate on their Use A, which is immediate from the
definitions of expectation and eigenfunction: 
\begin{prop}[Expectations from right eigenfunctions]
\label{prop:eigenfunction-expectation}  The expected value of a
right eigenfunction $\f$ with eigenvalue $\beta$ is 
\[
\Expect(\f(X_{t})|X_{0}=x_{0}):=\sum_{y\in\Omega}K^{t}(x_{0},y)\f(y)=\beta^{t}\f(x_{0}).
\]
\qed
\end{prop}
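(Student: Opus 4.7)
The plan is to observe that the statement reduces to an elementary matrix identity: if $\f$ (viewed as a column vector indexed by $\Omega$) satisfies $K\f = \beta\f$, then $K^t\f = \beta^t\f$, and the left-hand sum $\sum_{y\in\Omega} K^t(x_0,y)\f(y)$ is precisely the $x_0$-entry of $K^t\f$. So the argument is a straightforward induction on $t$.

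First, I would dispense with the base case $t=0$: here $K^0 = I$, so $\sum_y K^0(x_0,y)\f(y) = \f(x_0) = \beta^0 \f(x_0)$. Next, assuming the formula holds for some $t \geq 0$, I would write
\[
\sum_{y\in\Omega} K^{t+1}(x_0,y)\f(y) = \sum_{y\in\Omega}\sum_{z\in\Omega} K(x_0,z)K^t(z,y)\f(y) = \sum_{z\in\Omega} K(x_0,z) \bigl(\beta^t \f(z)\bigr),
\]
where the last equality uses the inductive hypothesis applied with $z$ in place of $x_0$. Factoring out $\beta^t$ and applying the right-eigenfunction defining relation $\sum_z K(x_0,z)\f(z) = \beta\f(x_0)$ then yields $\beta^{t+1}\f(x_0)$, completing the induction.

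The first equality of the proposition, $\Expect(\f(X_t) \mid X_0 = x_0) = \sum_y K^t(x_0,y)\f(y)$, is simply the definition of expectation combined with the fact that $K^t(x_0,y)$ is the $t$-step transition probability from $x_0$ to $y$ (a standard consequence of the Chapman--Kolmogorov equations, which in turn follows from the Markov property). There is no genuine obstacle here; the only point to be careful about is the direction of the matrix product, which is consistent with the convention fixed in Section \ref{sub:Markov-chain-background} that rows index source states and columns index destination states.
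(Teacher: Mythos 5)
Your proof is correct, and it simply spells out in full the argument the paper treats as immediate: the paper gives no proof beyond noting that the claim follows directly from the definitions of expectation and right eigenfunction, which is exactly the content of your induction $K^t\f=\beta^t\f$ together with the Chapman--Kolmogorov identification of $K^t(x_0,y)$ as the $t$-step transition probability. Nothing is missing, and the care you take with the row/column convention is consistent with Section \ref{sub:Markov-chain-background}.
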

In Sections \ref{sub:tob2r-efns} and \ref{sub:tab2r}, the computation
of eigenfunctions boils down to finding stationary distributions of
certain related chains, and our main tool for doing so will be detailed
balance:
\begin{lem}
\label{lem:detailed-balance} \cite[Prop. 1.19]{markovmixing} Let
$\{X_{t}\}$ be a Markov chain on the state space $\Omega$ with transition
matrix $K$. If a distribution $\pi$ on $\Omega$ is a solution to
the \emph{detailed balance equation} $\pi(x)K(x,y)=\pi(y)K(y,x)$
for each $x,y\in\Omega$, then $\pi$ is a stationary distribution.\end{lem}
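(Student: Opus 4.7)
The plan is to verify directly that $\pi$ satisfies the stationarity equation $\sum_{x \in \Omega} \pi(x) K(x,y) = \pi(y)$ for every $y \in \Omega$, starting from the assumed detailed balance identity. The key observation is that while the detailed balance equation is strictly stronger than stationarity (it is a ``local'' pointwise balance between each pair $(x,y)$, whereas stationarity is a ``global'' balance summed over all sources $x$), it is set up precisely so that summing it over the source coordinate collapses to the stationarity equation.

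Concretely, I would fix $y \in \Omega$ and sum the detailed balance equation $\pi(x) K(x,y) = \pi(y) K(y,x)$ over all $x \in \Omega$. The left-hand side becomes $\sum_{x} \pi(x) K(x,y)$, which is exactly the quantity appearing in the stationarity equation. On the right-hand side, $\pi(y)$ is independent of $x$ and can be pulled out of the sum, yielding $\pi(y) \sum_{x} K(y,x)$. Since $K$ is a transition matrix, its rows sum to $1$, so $\sum_{x} K(y,x) = 1$, and the right-hand side reduces to $\pi(y)$. This establishes $\sum_{x} \pi(x) K(x,y) = \pi(y)$, which is the defining equation for a stationary distribution (a left $1$-eigenfunction of $K$, per the definitions in Section \ref{sub:Markov-chain-background}).

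There is essentially no obstacle here: the entire argument is a single summation together with the row-sum property of a stochastic matrix. The only point worth flagging is that $\pi$ is already assumed to be a distribution (so in particular $\sum_x \pi(x) = 1$ and $\pi(x) \geq 0$), hence the conclusion ``$\pi$ is a stationary distribution'' requires no additional normalization step beyond the eigenfunction identity.
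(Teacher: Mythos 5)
Your proposal is correct and is exactly the argument the paper gives: sum the detailed balance equation over the source state $x$, pull $\pi(y)$ out of the sum, and use that the rows of $K$ sum to $1$. Nothing further is needed.
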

\begin{proof}
\[
\sum_{x\in\Omega}\pi(x)K(x,y)=\sum_{x\in\Omega}\pi(y)K(y,x)=\pi(y)\sum_{x\in\Omega}K(y,x)=\pi(y).
\]

\end{proof}
Note that the detailed balance condition is far from necessary for
a distribution to be stationary; there are plenty of Markov chains
which are not \emph{reversible}, meaning they admit no solutions to
their detailed balance equations. In fact, the descent operator chains
of this paper are in general not reversible; it is only their related
chains, for computing eigenfunctions, which are reversible.

\subsection{The Doob $h$-transform\label{sub:doobtransform}}

This section briefly explains a very general method of constructing
a transition matrix out of a linear map satisfying certain positivity
conditions. Section \ref{sub:construction} will exposit this construction
in full detail in the case where these linear maps are descent operators,
so readers interested solely in descent operator chains should feel
free to skip this section.

The Doob $h$-transform is a very general tool in probability, used
to condition a process on some event in the future \cite{doobtransformoriginal}.
The simple case of relevance here is conditioning a (finite, discrete-time)
Markov chain on non-absorption. The Doob transform constructs the
transition matrix of the conditioned chain out of the transition probabilities
of the original chain between non-absorbing states, or, equivalently,
out of the original transition matrix with the rows and columns for
absorbing states removed. As observed in the multiple references below,
the same recipe essentially works for an arbitrary non-negative matrix.
In the case where this matrix comes from a linear operator, the transform
has an elegant interpretation in terms of a rescaling of the basis.
\begin{thm}[Doob $h$-transform for linear maps]
\label{thm:doob-transform}  \cite[Th. 3.1.1]{mythesis} \cite[Def. 8.11, 8.12]{doobtransformbook} \cite[Sec.17.6.1]{markovmixing}
Let $V$ be a finite-dimensional vector space with basis $\calb$,
and $\T:V\rightarrow V$ be a linear map for which $K:=\left[\T\right]_{\calb}^{T}$
has all entries non-negative. Suppose there is an eigenvector $\eta\in V^{*}$
of the dual map $\T^{*}:V^{*}\rightarrow V^{*}$, with eigenvalue
1, taking only positive values on $\calb$. Then 
\[
\hatk(x,y):=\frac{\eta(y)}{\eta(x)}K(x,y)
\]
defines a transition matrix. Equivalently, $\hatk:=\left[\T\right]_{\hatcalb}^{T}$,
where $\hatcalb:=\left\{ \frac{x}{\eta(x)}|x\in\calb\right\} $.
\end{thm}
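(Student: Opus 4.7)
The proof splits into two very short tasks: showing $\hatk$ has non-negative entries and unit row sums, and then verifying the change-of-basis reformulation.

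The plan is first to write out explicitly what the matrix $K := [\T]_\calb^T$ does. By the definition of $[\T]_\calb$ recalled in Section \ref{sub:Linear-algebra-background}, $\T(x) = \sum_{y \in \calb} [\T]_\calb(y,x) y$, so with the transpose convention this becomes
\[
\T(x) \;=\; \sum_{y \in \calb} K(x,y)\, y.
\]
Non-negativity of $\hatk(x,y) = \eta(y) K(x,y)/\eta(x)$ is then immediate from the hypotheses: $K(x,y) \geq 0$ and $\eta(x), \eta(y) > 0$. The substantive point is that the rows of $\hatk$ sum to $1$, for which I would apply the dual map hypothesis. Since $\T^*\eta = \eta$ means $\eta(\T v) = \eta(v)$ for every $v \in V$, evaluating at a basis element $x \in \calb$ gives
\[
\eta(x) \;=\; \eta(\T x) \;=\; \eta\!\left(\sum_{y \in \calb} K(x,y)\, y\right) \;=\; \sum_{y \in \calb} K(x,y)\, \eta(y).
\]
Dividing by $\eta(x)$ yields $\sum_y \hatk(x,y) = 1$, finishing the first half.

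For the reformulation in terms of the rescaled basis $\hatcalb = \{\hat x := x/\eta(x) \mid x \in \calb\}$, I would simply compute $\T(\hat x)$ and re-express the result in $\hatcalb$:
\[
\T(\hat x) \;=\; \frac{1}{\eta(x)} \T(x) \;=\; \frac{1}{\eta(x)} \sum_{y \in \calb} K(x,y)\, y \;=\; \sum_{y \in \calb} \frac{\eta(y)}{\eta(x)} K(x,y)\, \hat y \;=\; \sum_{y \in \calb} \hatk(x,y)\, \hat y.
\]
Reading off the matrix $[\T]_{\hatcalb}$ and transposing identifies $\hatk = [\T]_{\hatcalb}^T$, which is the claimed equivalent formulation.

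There is no real obstacle here; the only thing requiring care is keeping straight the transpose convention between a linear map's matrix and a Markov transition matrix, and translating the eigenvector condition on $\eta$ into the identity $\sum_y K(x,y)\eta(y) = \eta(x)$, which is precisely the left $1$-eigenvector relation for $K$ that is needed to normalise each row of $\hatk$.
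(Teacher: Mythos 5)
Your proof is correct and follows essentially the same route as the paper: translating $\T^*\eta=\eta$ into the harmonic identity $\sum_y K(x,y)\eta(y)=\eta(x)$, deducing non-negativity and unit row sums, and the paper simply phrases the first step via $K=[\T^*]_{\calbdual}$ rather than evaluating $\eta(\T x)$ directly. Your explicit check of the rescaled-basis formulation $\hatk=[\T]_{\hatcalb}^T$ is a small addition the paper leaves implicit, but it is the same computation in disguise.
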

The Markov chain with transition matrix $\hatk$ above is called the
\emph{Markov chain on $\calb$ driven by $\T$}.
\begin{proof}
First note that $K=\left[\T^{*}\right]_{\calbdual}$ by definition,
so $\T^{*}\eta=\eta$ translates to $\sum_{y}K(x,y)\eta(y)=\eta(x)$.
(Functions satisfying this latter condition are called \emph{harmonic},
hence the name $h$-transform.) Since $\eta(x)>0$ for all $x$, it
is clear that $\hatk(x,y)\ge0$. It remains to show that the rows
of $\hatk$ sum to 1: 
\[
\sum_{y}\hatk(x,y)=\frac{\sum_{y}K(x,y)\eta(y)}{\eta(x)}=\frac{\eta(x)}{\eta(x)}=1.
\]

\end{proof}
The function $\eta:V\rightarrow\mathbb{R}$ above is the \emph{rescaling
function}. Different choices of $\eta$ for the same linear operator
$\T$ can lead to different Markov chains, but the notation suppresses
the dependence on $\eta$ because, for $\T$ a descent operator, there
is a canonical choice of $\eta$ (Lemma \ref{lem:eta} below).The
assumption that $\eta$ has eigenvalue 1 can be easily relaxed by
scaling the transformation $\T$, see \cite[Th. 2.3]{hopfchainlift}.
We choose to impose this assumption here as it unclutters the eigenfunction
formulae in Proposition \ref{prop:doobtransform-efns} below.

The main advantage of fashioning a transition matrix using the Doob
transform, as opposed to some other manipulation on $K$ (such as
scaling each row separately) is that the diagonalisation of the Markov
chain is equivalent to identifying the eigenvectors of $\T$ and its
dual $\T^{*}$: 
\begin{prop}[Eigenfunctions for Doob transform chains]
\label{prop:doobtransform-efns}   \cite[Prop. 3.2.1]{mythesis}  \cite[Lemma 4.4.1.4]{zhou}  \cite[Lem. 2.11]{doobnotes}Let
$\{X_{t}\}$ be the Markov chain on $\calbn$ driven by $\T:V\rightarrow V$
with rescaling function $\eta$. Then:
\begin{description}
\item [{(L)}] The left $\beta$-eigenfunctions $\g:\calbn\rightarrow\mathbb{R}$
for $\{X_{t}\}$ are in bijection with the $\beta$-eigenvectors $g\in V$
of $\T$, through the vector space isomorphism 
\[
\g(x):=\mbox{coefficient of }x\mbox{ in }\eta(x)g.
\]

\item [{(R)}] The right $\beta$-eigenfunctions $\f:\calbn\rightarrow\mathbb{R}$
for $\{X_{t}\}$ are in bijection with the $\beta$-eigenvectors $f\in V^{*}$
of the dual map $\T^{*}$, through the vector space isomorphism 
\[
\f(x):=\frac{1}{\eta(x)}f(x).
\]

\end{description}

\qed

\end{prop}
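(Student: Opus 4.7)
The plan is direct verification: compute both sides of the eigenfunction equations for $\hatk$ after substituting the proposed formulas, and check they match the eigenvector equations for $\T$ and $\T^*$. Since $\eta(x) > 0$ for all $x \in \calb$, the maps $g \mapsto \g$ and $f \mapsto \f$ are obviously vector space isomorphisms (with inverses $\g(x) \mapsto \g(x)/\eta(x)$ and $f(x) \mapsto \eta(x)\f(x)$, respectively), so all that is really required is to check that the eigenvalue equation is preserved.

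For part (R), recall from the proof of Theorem \ref{thm:doob-transform} that $K = [\T^*]_{\calbdual}$, so the relation $\T^* f = \beta f$ reads $\sum_y K(x,y) f(y) = \beta f(x)$ in coordinates. With $\f(x) := f(x)/\eta(x)$, substituting $\hatk(x,y) = \frac{\eta(y)}{\eta(x)} K(x,y)$ gives
\[
\sum_y \hatk(x,y) \f(y) \;=\; \frac{1}{\eta(x)} \sum_y K(x,y) f(y) \;=\; \frac{\beta f(x)}{\eta(x)} \;=\; \beta \f(x),
\]
confirming that $\f$ is a right $\beta$-eigenfunction of $\hatk$.

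For part (L), if $g = \sum_y c_y y$ satisfies $\T g = \beta g$, then expanding $\T g$ in the basis $\calb$ and using $K(x,y) = [\T]_\calb(y,x)$ shows that $\sum_x c_x K(x,y) = \beta c_y$ for each $y$. Setting $\g(x) := \eta(x) c_x$ and substituting then yields
\[
\sum_x \g(x) \hatk(x,y) \;=\; \sum_x \eta(x) c_x \cdot \frac{\eta(y)}{\eta(x)} K(x,y) \;=\; \eta(y) \sum_x c_x K(x,y) \;=\; \eta(y) \beta c_y \;=\; \beta \g(y),
\]
as required.

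I do not anticipate any real obstacle here; each computation is essentially a one-line rearrangement. The conceptual content of the proposition is that the Doob transform is simply a change of basis at the level of linear maps, so its effect on eigendata is merely to rescale coefficients by $\eta$. A more structural alternative would exploit the second characterization in Theorem \ref{thm:doob-transform}, namely $\hatk = [\T]_{\hatcalb}^T$ with $\hatcalb = \{x/\eta(x) : x\in\calb\}$, whose dual basis is $\{\eta(x)\,x^* : x\in\calb\}$: then right eigenfunctions of $\hatk$ are $\T^*$-eigenvectors expressed in this dual basis, and left eigenfunctions are $\T$-eigenvectors expressed in $\hatcalb$, both producing the stated formulas immediately.
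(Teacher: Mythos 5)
Your proof is correct: both coordinate computations are valid identities (using $\eta>0$), so they give the ``if and only if'' needed for the stated bijections, and the paper itself states this proposition without proof, deferring to the cited references. Your direct verification (and the alternative via $\hatk=\left[\T\right]_{\hatcalb}^{T}$, reading eigenfunctions off in the rescaled basis $\hatcalb$ and its dual) is exactly the standard argument those references give, so there is nothing to add.
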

\begin{rem*}
In the Markov chain literature, the term ``left eigenvector'' is
often used interchangeably with ``left eigenfunction'', but this
work will be careful to make a distinction between the eigenfunction
$\g:\calb\rightarrow\mathbb{R}$ and the corresponding eigenvector
$g\in V$ (and similarly for right eigenfunctions).
\end{rem*}

\subsection{Combinatorial Hopf algebras\label{sub:cha-background}}

The application of Hopf algebras to combinatorics originated from
\cite{jonirota}; much general theory have since been developed \cite{schmitt,hivertcspolynomialrealisation,abs,hopfmonoids,antipodeformula},
and a plethora of examples constructed and analysed in detail \cite{tableauxhopfalg,grhiscommutative,parkingfnhopfalg,vincenthopfalg}.
Loosely speaking, a combinatorial Hopf algebra is a graded vector
space $\calh=\bigoplus_{n=0}^{\infty}\calh_{n}$ with a basis $\calb=\amalg_{n}\calbn$
indexed by combinatorial objects, such as graphs, trees or permutations.
The grading reflects the ``size'' of the objects. $\calh$ is \emph{connected}
in that $\dim\calh_{0}=1$, spanned by a unique empty object. There
is a product $m:\calh_{i}\otimes\calh_{j}\rightarrow\calh_{i+j}$
and a coproduct $\Delta:\calh_{n}\to\bigoplus_{i=0}^{n}\calh_{i}\otimes\calh_{n-i}$
encoding respectively how to combine two objects and to break an object
into two. These must satisfy associativity, compatibility and various
other axioms; see the survey \cite{vicreinernotes} for details.

Many families of combinatorial objects (graphs, trees) have a single
member of size 1, so $\calh_{1}$ is often one-dimensional. In such
cases, $\bullet$ will denote this sole object of size 1, so $\calb_{1}=\{\bullet\}$.

One simple, instructive, example of a combinatorial Hopf algebra is
the shuffle algebra of \cite{shufflealg}, whose associated Markov
chains describe the cut-and-interleave card-shuffles of \cite{cppriffleshuffle}.
\begin{example}
\label{ex:shufflealg-bg} The \emph{shuffle algebra} $\calsh$, as
a vector space, has basis the set of all words in the letters $\{1,2,\dots,N\}$
(for some $N$, whose exact value is often unimportant). View the
word $\llbracket w_{1}\dots w_{n}\rrbracket$ as the deck of cards
with card $w_{1}$ on top, card $w_{2}$ second from the top, and
so on, so card $w_{n}$ is at the bottom (the bracket notation is
non-standard). The degree of a word is its number of letters, i.e.
the number of cards in the deck. The product of two words, also denoted
by $\shuffle$, is the sum of all their interleavings (with multiplicity),
and the coproduct is deconcatenation, or cutting the deck. For example:
\begin{align*}
m(\llbracket15\rrbracket\otimes\llbracket52\rrbracket)=\llbracket15\rrbracket\shuffle\llbracket52\rrbracket & =2\llbracket1552\rrbracket+\llbracket1525\rrbracket+\llbracket5152\rrbracket+\llbracket5125\rrbracket+\llbracket5215\rrbracket;\\
\Delta(\llbracket316\rrbracket) & =\llbracket\rrbracket\otimes\llbracket316\rrbracket+\llbracket3\rrbracket\otimes\llbracket16\rrbracket+\llbracket31\rrbracket\otimes\llbracket6\rrbracket+\llbracket316\rrbracket\otimes\llbracket\rrbracket.
\end{align*}

\end{example}
Given a graded connected Hopf algebra $\calh=\bigoplus_{n\geq0}\calhn$,
the symmetry of the Hopf axioms allows the definition of a Hopf structure
on the (graded) dual vector space $\calhdual:=\oplus_{n\geq0}\calhndual$:
for $f,g\in\calhdual$, set
\[
m(f\otimes g)(x):=(f\otimes g)(\Delta x),\quad\Delta(f)(w\otimes z)=f(wz),
\]
with $x,z,w\in\calh$. (Here, $(f\otimes g)(a\otimes b)=f(a)g(b)$.)
\begin{example}
\label{ex:freeassalg-bg} The dual of the shuffle algebra $\calsh$
is the \emph{free associative algebra} $\calsh^{*}$, whose basis
is also indexed by words in the letters $\{1,2,\dots,N\}$. The product
in $\calsh^{*}$ is concatenation, and the coproduct is ``deshuffling'';
for example:  
\begin{align*}
m(\llbracket15\rrbracket\otimes\llbracket52\rrbracket) & =\llbracket1552\rrbracket;\\
\Delta(\llbracket316\rrbracket) & =\llbracket\rrbracket\otimes\llbracket316\rrbracket+\llbracket3\rrbracket\otimes\llbracket16\rrbracket+\llbracket1\rrbracket\otimes\llbracket36\rrbracket+\llbracket6\rrbracket\otimes\llbracket31\rrbracket\\
 & \phantom{=}+\llbracket31\rrbracket\otimes\llbracket6\rrbracket+\llbracket36\rrbracket\otimes\llbracket1\rrbracket+\llbracket16\rrbracket\otimes\llbracket3\rrbracket+\llbracket316\rrbracket\otimes\llbracket\rrbracket.
\end{align*}
The associated Markov chains are the (unweighted) pop-shuffles of
\cite[Sec. 2]{hyperplanewalk}, the time-reversal of the cut-and-interleave
shuffles. First, take some cards out of the deck to form a separate
pile, keeping their relative order. Repeat this a few times, then
place the piles one on top of another. This viewpoint is useful for
the proof of the spectrum of descent operators (see Remark 2 after
Lemma \ref{lem:symlemma}).
\end{example}
Because many combinatorial objects have ``symmetric'' assembling
or breaking rules, many combinatorial Hopf algebras are \emph{commutative}
($wz=zw$ for all $w,z\in\calh$) or \emph{cocommutative} (if $\Delta(x)=\sum_{i}w_{i}\otimes z_{i}$,
then $\sum_{i}w_{i}\otimes z_{i}=\sum_{i}z_{i}\otimes w_{i}$). For
example, shuffle algebra is commutative but not cocommutative, and
dualising means that the free associative algebra is noncommutative
and cocommutative. The descent operators are better behaved on such
algebras, so under a (co)commutativity hypothesis, stronger results
hold - for example, the Markov chains are diagonalisable (Theorem
\ref{thm:evalues}).

The eigenvectors of our Markov chains will be constructed from \emph{primitive}
elements: $x\in\calh$ satisfying $\Delta(x)=1\otimes x+x\otimes1.$
It is easy to show that the primitive elements of $\calh$ form a
subspace and a Lie algebra. Write $\calp$ for a basis of this subspace.
Such a basis has been computed for many combinatorial Hopf algebras
 \cite[Sec. 5]{lrstructure} \cite[Sec. 3.1.3]{fisher}.

\subsection{Descent operators\label{sub:Descent-operators-background}}

Here is a non-standard definition of descent operators, which will
be useful for our probabilistic applications. 
\begin{defn}
\label{defn:descent-operator} Let $\calh$ be a graded Hopf algebra.
\begin{enumerate}
\item Given a \emph{weak-composition} $D=(d_{1},\dots,d_{l(D)})$ of $n$
(i.e. non-negative integers $d_{i}$ summing to $n$), define the
\emph{refined coproduct} $\Delta_{D}:\calhn\rightarrow\calh_{d_{1}}\otimes\dots\otimes\calh_{d_{l(D)}}$
to be the composition of the iterated coproduct $\Delta^{[l(D)]}:=(\Delta\otimes\id^{\otimes l(D)-1})\circ\dots(\Delta\otimes\id\otimes\id)\circ(\Delta\otimes\id)\circ\Delta$
followed by the projection $\calh^{\otimes l(D)}\rightarrow\calh_{d_{1}}\otimes\dots\otimes\calh_{d_{l(D)}}$.
\item The \emph{descent operators} are the linear combinations of the composite
maps $m\Delta_{D}:\calhn\rightarrow\calhn$ (abusing notation here
and writing $m:\calh^{l}\rightarrow\calh$ for the multiplication
of arbitrarily many elements).
\item Given a probability distribution $P$ on the set of weak-compositions
of $n$, define $m\Delta_{P}:\calhn\rightarrow\calhn$ as 
\[
m\Delta_{p}:=\sum\frac{P(D)}{\binom{n}{D}}m\Delta_{D},
\]
where $\binom{n}{D}$ is the multinomial coefficient $\binom{n}{d_{1}\dots d_{l(D)}}$.
\end{enumerate}
\end{defn}
On a combinatorial Hopf algebra, $\Delta_{D}$ captures the notion
of breaking an object into pieces of sizes $d_{1},d_{2},\dots,d_{l(D)}$.
So each step of the Markov chain driven by $m\Delta_{p}$ first picks
a weak-composition $D$ according to the distribution $P$, then breaks
the current state into pieces of sizes $d_{1},d_{2},\dots,d_{l(D)}$,
then reassembles these pieces (see Theorem \ref{thm:cppchain} for
a precise statement). For example, when $P$ is the binomial distribution
on weak-compositions with two parts, and zero on all other weak-compositions,
the map $m\Delta_{P}$ is simply $m\Delta$. On the shuffle algebra,
this describes the  Gilbert-Shannon-Reeds model of riffle-shuffling,
as analysed in \cite{originalriffleshuffle}: cut the deck into two
piles binomially, then combine them by repeatedly dropping the bottommost
card from either pile, chosen with probability proportional to the
current pile size.
\begin{example}
\label{ex:descent-operator-shuffle-freeassalg} In the shuffle algebra
$\calsh$,
\begin{align*}
\Delta_{1,1,2}\llbracket1552\rrbracket & =\llbracket1\rrbracket\otimes\llbracket5\rrbracket\otimes\llbracket52\rrbracket.\\
\Delta_{2,0,2}\llbracket1552\rrbracket & =\llbracket15\rrbracket\otimes\llbracket\rrbracket\otimes\llbracket52\rrbracket.\\
m\Delta_{1,3}\llbracket1552\rrbracket & =m(\llbracket1\rrbracket\otimes\llbracket552\rrbracket)=\llbracket1552\rrbracket+\llbracket5152\rrbracket+\llbracket5512\rrbracket+\llbracket5521\rrbracket.
\end{align*}
And in the free associative algebra $\calsh^{*}$,
\begin{align*}
m\Delta_{1,2}\llbracket316\rrbracket & =m(\llbracket3\rrbracket\otimes\llbracket16\rrbracket+\llbracket1\rrbracket\otimes\llbracket36\rrbracket+\llbracket6\rrbracket\otimes\llbracket31\rrbracket)\\
 & =\llbracket316\rrbracket+\llbracket136\rrbracket+\llbracket631\rrbracket.
\end{align*}

\end{example}
The notation $m\Delta_{D}$ is from \cite{hopfmonoids} and is recent;
the same operator is written $B_{D}$ in \cite{descentoperators},
and $\xi_{D}$ in \cite{descentoperatordarij}. \cite[Prop. 88]{descentoperatorhopfmonoids}
gives a version for Hopf monoids. These and other sources mostly consider
$m\Delta_{D}$ when $D$ is a (strict) composition (i.e. no $d_{i}$
is zero). Indeed, on a graded connected Hopf algebra, this is sufficient,
since removing parts of size 0 from a weak-composition $D$ does not
change the map $m\Delta_{D}$. However, the probability distributions
are more natural if parts are allowed to have size 0.

\begin{rems*}
$ $
\begin{enumerate}[label=\arabic*.]
\item Every \emph{positive descent operator} (that is, a non-negative linear
combination of $m\Delta_{D}$) is a multiple of $m\Delta_{P}$ for
some probability distribution $P$. Hence the results in Sections
\ref{sec:Markov-Chains-from-descent-operators} and \ref{sec:partsof1}
concerning $m\Delta_{P}$ have analogues for arbitrary positive descent
operators. 
\item The dual of a descent operator $m\Delta_{P}$ is simply the same operator
on the dual Hopf algebra. This observation will be useful for deriving
right eigenfunctions of the associated Markov chains.
\end{enumerate}
\end{rems*}The descent operators are so named because, on a commutative
or cocommutative Hopf algebra, their composition is equivalent to
the product on Solomon's descent algebra \cite{solomondescentalg}.
For this work, it will be more useful to express the latter as the
internal product $\cdot$ in the algebra of noncommutative symmetric
functions $\sym$ \cite{ncsym}. Let $\cppmap:\sym\rightarrow\End(\calh)$
denote the linear map sending the complete noncommutative symmetric
functions $S^{D}$ to the descent operator $m\Delta_{D}$. (Here,
$\End(\calh)$ is the algebra of linear maps $\calh\rightarrow\calh$;
these maps need not respect the product or coproduct.)
\begin{prop}[Compositions of descent operators]
\label{prop:descentoperator-composition}   \cite[Th. II.7]{descentoperators} \cite[Prop. 5.1]{ncsym}Let
$\calh$ be a graded connected Hopf algebra, and $\cppmap:\sym\rightarrow\End(\calh)$
be the linear map with $\cppmap(S^{D})=m\Delta_{D}$.
\begin{enumerate}
\item If $\calh$ is commutative, then, for any $F,G\in\sym$, the composite
of their images is $\cppmap(F)\circ\cppmap(G)=\cppmap(G\iprod F)$.
\item If $\calh$ is cocommutative, then, for any $F,G\in\sym$, the composite
of their images is $\cppmap(F)\circ\cppmap(G)=\cppmap(F\iprod G)$.
\end{enumerate}

In particular, if $\calh$ is commutative (resp. cocommutative), then
the set of descent operators on $\calh$ is closed under composition.
Indeed, 
\[
m\Delta_{D}\circ m\Delta_{D'}=\sum_{M}m\Delta_{D''(M)},
\]
where the sum runs over all $l(D)$-by-$l(D')$ matrices $M$ of non-negative
integers, with row $i$ summing to $d_{i}$ and column $j$ summing
to $d'_{j}$. And $D''(M)$ is the weak-composition formed from all
the entries of $M$, by reading down each column from the leftmost
column to the rightmost column (resp. by reading left to right across
each row from the top row to the bottom row): 
\begin{align*}
D''(M) & =(M(1,1),M(2,1),\dots,M(l(D),1),M(1,2)\dots,M(1,l(D')),\dots,M(l(D),l(D'))\\
\mbox{(resp.}\quad D''(M) & =(M(1,1),M(1,2),\dots,M(1,l(D')),M(2,1)\dots,M(l(D),1),\dots,M(l(D),l(D'))\quad\mbox{)}.
\end{align*}
 \qed

\end{prop}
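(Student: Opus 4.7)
The plan is to establish the explicit matrix formula $m\Delta_D \circ m\Delta_{D'} = \sum_M m\Delta_{D''(M)}$ by direct computation from the Hopf algebra axioms, and then extract the abstract internal product statements by matching with the known formula for $S^D \cdot S^{D'}$ in $\sym$.

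First I would fix $x \in \calhn$ and expand $m\Delta_{D'}(x) = \sum x^{(1)} \cdots x^{(l(D'))}$ with $x^{(j)} \in \calh_{d'_j}$. The central algebraic step is the expansion of $\Delta^{[l(D)]}$ on this product: since $\Delta$, and hence $\Delta^{[l(D)]} : \calh \to \calh^{\otimes l(D)}$, is an algebra morphism (with componentwise multiplication on the codomain), one has
$$\Delta^{[l(D)]}(x^{(1)} \cdots x^{(l(D'))}) = \prod_{j=1}^{l(D')} \Delta^{[l(D)]}(x^{(j)}).$$
Writing $\Delta^{[l(D)]}(x^{(j)}) = \sum x^{(j)}_{(1)} \otimes \cdots \otimes x^{(j)}_{(l(D))}$ via coassociativity, the right-hand side places $x^{(1)}_{(i)} x^{(2)}_{(i)} \cdots x^{(l(D'))}_{(i)}$ in the $i$-th tensor slot. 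Projecting onto the degree profile $(d_1, \dots, d_{l(D)})$ forces $\deg(x^{(j)}_{(i)}) = M(i,j)$ for a non-negative integer matrix $M$ with row sums $d_i$ and column sums $d'_j$; this is precisely how the matrices index the sum. The outer $m$ then collapses the $l(D)$ slots into a single product, giving the $M$-term
$$\prod_{i=1}^{l(D)} \bigl(x^{(1)}_{(i)} x^{(2)}_{(i)} \cdots x^{(l(D'))}_{(i)}\bigr).$$

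The remaining task is to identify this $M$-term with $m\Delta_{D''(M)}(x)$. A direct coassociativity argument shows that $x^{(j)}_{(i)}$ equals the $((j-1)l(D) + i)$-th piece of $\Delta^{[l(D) l(D')]}(x)$, so the $M$-term is the row-reading product of pieces of $\Delta^{[l(D) l(D')]}(x)$ projected to degrees $M(i,j)$. When $\calh$ is cocommutative, Sweedler indices in the iterated coproduct can be freely permuted, so this row-reading product equals $m\Delta_{D''(M)}(x)$ with $D''(M)$ read across rows, giving the cocommutative formula. When $\calh$ is commutative, the factors of the product in $\calhn$ may be rearranged, converting the row-reading into the column-reading and yielding the commutative formula. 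The main subtlety will be keeping the indices aligned so that each matrix $M$ contributes exactly one $m\Delta_{D''(M)}(x)$, with multiplicities arising only from the summation over matrices and not from any collapse among the Sweedler terms.

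Finally, the internal product on $\sym$ in the $S$-basis has the known description (see \cite{ncsym}) of the same form $\sum_M S^{D''(M)}$ under the same matrix summation. Matching reading conventions converts the matrix formula to the claimed abstract identities: in the commutative case, the column-reading corresponds to $G \cdot F$ (so $\cppmap(F) \circ \cppmap(G) = \cppmap(G \cdot F)$), while in the cocommutative case the row-reading gives $F \cdot G$ (so $\cppmap(F) \circ \cppmap(G) = \cppmap(F \cdot G)$). The order reversal in the commutative case is a bookkeeping consequence of how the reading direction interacts with the definition of $\cdot$ on $\sym$.
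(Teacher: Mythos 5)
Your argument is correct, and it is genuinely different in character from the paper's treatment: the paper gives no proof at all, deferring to \cite[Th. II.7]{descentoperators} and \cite[Prop. 5.1]{ncsym}, whereas you derive the matrix formula $m\Delta_{D}\circ m\Delta_{D'}=\sum_{M}m\Delta_{D''(M)}$ directly from the bialgebra axioms. Your key steps all check out: $\Delta^{[l(D)]}$ is an algebra morphism into $\calh^{\otimes l(D)}$ with componentwise product, the projection to the degree profile $(d_{1},\dots,d_{l(D)})$ splits the resulting sum exactly according to matrices $M$ with row sums $d_{i}$ and column sums $d'_{j}$ (so no collapse or multiplicity issues arise), and coassociativity identifies $x^{(j)}_{(i)}$ with the $((j-1)l(D)+i)$-th piece of $\Delta^{[l(D)l(D')]}(x)$, i.e.\ the Sweedler-natural order is the column-reading order. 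From there the dichotomy is exactly as you say: cocommutativity lets you permute tensor slots of the iterated coproduct so the multiplication order (row-reading) becomes a genuine $m\Delta_{D''(M)}$, while commutativity lets you reorder the factors of the final product back into the Sweedler-natural column-reading order; this reproduces the two reading conventions in the statement, including which one belongs to which hypothesis. What your route buys over the citation is transparency about where (co)commutativity is used and why the order of composition reverses in the commutative case; what it costs is the final convention-matching with the $S$-basis internal product formula of \cite{ncsym}, which you assert rather than verify. That last step is only bookkeeping (by bilinearity, parts i and ii follow from the case $F=S^{D}$, $G=S^{D'}$ once the matrix formula is matched against $S^{D}\iprod S^{D'}=\sum_{M}S^{D''(M)}$ with the row-reading convention), but in a written version you should state the convention of \cite{ncsym} explicitly and do the one-line comparison, since that is precisely where a sign-of-composition error would hide.
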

(The case $\calh=\calsh$, concerning compositions of cut-and-interleave
shuffles, was proved in \cite[Th. 5.1]{cppriffleshuffle}.) Consequently,
the problem of finding eigenvalues and eigenvectors of descent operators
is closely connected to the determination of orthogonal idempotents
of subalgebras (under the internal product) of $\sym$ (see the remark
after Lemma \ref{lem:idempotents}).
\begin{rem*}
The use of $\iprod$ to denote the internal product is non-standard,
chosen to evoke the standard symbol $\circ$ for composition on $\End(\calh)$.
The usual notation of $*$ is confusing here, as it usually indicates
\emph{convolution product} ($\T*\T':=m(\T\otimes\T')\Delta$), which
corresponds under $\theta$ to the external product on $\sym$, not
the internal product.
\end{rem*}

\section{Markov Chains from Descent Operators\label{sec:Markov-Chains-from-descent-operators}}

Section \ref{sub:construction} applies the Markov chain construction
of the previous section to the descent operators $m\Delta_{P}$ of
Section \ref{sub:Descent-operators-background}. Sections \ref{sub:evalue}
and \ref{sub:Stationary-distribution} give respectively the spectrum
and stationary distributions of these chains, and Section \ref{sub:Absorption}
relates the quasisymmetric invariants of \cite{abs} to the absorption
probabilities of certain chains.

\subsection{Construction \label{sub:construction}}

Recall that, for a probability distribution $P$ on weak-compositions
of a fixed integer $n$, the descent operator $m\Delta_{P}:\calhn\rightarrow\calhn$
is
\[
m\Delta_{p}=\sum\frac{P(D)}{\binom{n}{d_{1}\dots d_{l(D)}}}m\Delta_{D}.
\]
To apply the Doob $h$-transform (Theorem \ref{thm:doob-transform})
to the linear map $m\Delta_{P}$, it is first necessary to find a
basis $\calbn$ of $\calhn$ with respect to which $m\Delta_{P}$
has a nonnegative matrix. One stronger condition that is more natural,
at least for combinatorial Hopf algebras, is to mandate that $\calb$
has non-negative product and coproduct structure constants in the
relevant degrees - this is the essence of conditions i and ii in the
definition below. As for condition iii: informally, this insists that
every combinatorial object indexing the basis may be broken into pieces
of size 1. Such a restriction is necessary since Lemma \ref{lem:eta}
will show that $\eta$ is a suitable rescaling function (in the sense
of Theorem \ref{thm:doob-transform}).
\begin{defn}
\label{defn:state-space-basis} Let $\calh=\bigoplus_{n\geq0}\calhn$
be a graded connected Hopf algebra over $\mathbb{R}$ with each $\calhn$
finite-dimensional. Let $D=(d_{1},\dots,d_{l(D)})$ be a weak-composition
of some fixed integer $n$. A basis $\calb=\amalg_{n\geq0}\calbn$
of $\calh$ is a \emph{state space basis for $D$ }(or \emph{for $m\Delta_{D}$})
if:
\begin{enumerate}
\item for all $z_{1}\in\calb_{d_{1}},z_{2}\in\calb_{d_{2}},\dots,z_{l(D)}\in\calb_{d_{l(D)}}$,
their product is $z_{1}z_{2}\dots z_{l(D)}=\sum_{y\in\calb_{n}}\xi_{z_{1},\dots,z_{l(D)}}^{y}y$
with $\xi_{z_{1},\dots,z_{l(D)}}^{y}\geq0$ (non-negative product
structure constants);
\item for all $x\in\calb_{n}$, its coproduct is $\Delta_{D}(x)=\sum_{z_{i}\in\calb_{d_{i}}}\eta_{x}^{z_{1},\dots,z_{l(D)}}z_{1}\otimes z_{2}\otimes\dots\otimes z_{l(D)}$
with $\eta_{x}^{z_{1},\dots,z_{l(D)}}\geq0$ (non-negative coproduct
structure constants);
\item for all $x\in\calb$, the function 
\[
\eta(x):=\mbox{sum of coefficients (in the }\calb_{1}\otimes\dots\otimes\calb_{1}\mbox{ basis) of }\Delta_{1,\dots,1}(x)
\]
evaluates to a positive number.
\end{enumerate}

If $P$ is a probability distribution on weak-compositions of $n$,
then a basis $\calb=\amalg_{n\geq0}\calbn$ of $\calh$ is a \emph{state
space basis for $P$ }(or \emph{for $m\Delta_{P}$}) if it is a state
space basis for all $D$ with non-zero probability under $P$.

\end{defn}
Note that, if all structure constants of $\calb$ are non-negative
regardless of degree, then $\calb$ is a state space basis for all
distributions $P$. (It is in fact sufficient to check that all $\xi_{z_{1},z_{2}}^{y}$
and all $\eta_{x}^{z_{1},z_{2}}$ are non-negative, because of associativity
and coassociativity, see \cite[Lem. 4.2.1]{mythesis}.) In this case,
\cite[Th. 4.3.7.i]{mythesis} shows that condition iii is equivalent
to $\calb$ not containing any primitive elements of degree greater
than 1. ($\calh$ may contain primitive elements of any degree, so
long as those of degree greater than one are not in the basis $\calb$.)
In general, the absence of primitives in the basis $\calb$ is necessary
but not sufficient.
\begin{example}
\label{ex:t2r-basis} Let $P$ be the distribution that is conentrated
at $(1,n-1)$ - that is, $P((1,n-1))=1$, and $P(D)=0$ for all other
weak-compositions $D$. (Recall from the introduction that, on the
shuffle algebra, this distribution induces the top-to-random card-shuffle.)
Then conditions i and ii in Definition \ref{defn:state-space-basis}
simply require $m:\calh_{1}\otimes\calh_{n-1}\rightarrow\calh_{n}$
and $\Delta_{1,n-1}$ to have non-negative structure constants. (In
other words, the requirement $\xi_{c,z}^{y}\geq0$, $\eta_{x}^{c,z}\geq0$
is only for $z\in\calb_{n-1}$, $c\in\calb_{1}$, $x,y\in\calbn$.) 
\end{example}
All bases of Hopf algebras in this paper have all structure constants
non-negative. For examples which satisfy the conditions in Example
\ref{ex:t2r-basis} and yet have some negative structure constants,
see the plethora of ``schurlike'' bases in noncommutative symmetric
functions \cite{quasischurinncsym,immaculate}.
\begin{rem*}
If $\calh_{1}=\emptyset$ (so there are no combinatorial objects of
size 1), then, according to Definition \ref{defn:state-space-basis},
$\calh$ has no state space bases. However, it is still possible,
at least theoretically, to define descent operator Markov chains on
$\calh$. There are currently no known examples of such chains, so
we do not go into the technical details here - see the last paragraph
of \cite[Sec. 4.3]{mythesis}.
\end{rem*}
Having scrutinised the non-negativity condition on structure constants,
focus now on the function $\eta$, which rigorises the concept of
``number of ways to break into singletons''. It is usually a well-investigated
number: for example, for the irreducible representations of the symmetric
group (inducing Fulman's restriction-then-induction chain), it is
the dimension of the representation \cite[Ex. 4.4.3]{mythesis}. Proposition
\ref{lem:eta} below verifies that $\eta$ is indeed a possible rescaling
function for the Doob transform - in fact, \cite[Th. 4.3.7]{mythesis}
shows that, in some sense, this $\eta$ is the optimal rescaling function.
\begin{lem}
\label{lem:eta} Let $\calh=\bigoplus_{n\geq0}\calhn$ be a graded
connected Hopf algebra over $\mathbb{R}$ with each $\calhn$ finite-dimensional,
and $\calb_{1}$ a basis of $\calh_{1}$. The linear function $\eta:\calh\rightarrow\mathbb{R}$
with 
\[
\eta(x):=\mbox{sum of coefficients (in the }\calb_{1}\otimes\dots\otimes\calb_{1}\mbox{ basis) of }\Delta_{1,\dots,1}(x)
\]
is a $1$-eigenvector of the dual map to the descent operator $m\Delta_{P}:\calhn\rightarrow\calhn$,
for any probability distribution $P$.\end{lem}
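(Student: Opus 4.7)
The plan is to dualise: by the self-duality observation in the Remarks after Definition \ref{defn:descent-operator}, the dual of $m\Delta_P:\calhn\rightarrow\calhn$ is the same operator $m\Delta_P$ acting on $\calhndual$, so the desired identity $(m\Delta_P)^*\eta=\eta$ is equivalent to showing that $\eta_n$, the restriction of $\eta$ to $\calhn$, is fixed by $m\Delta_P$ viewed inside the dual Hopf algebra $\calhdual$.

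To set this up, let $\epsilon_+\in\calh_1^*$ be the functional with $\epsilon_+(c)=1$ for every $c\in\calb_1$. Since the product in $\calhdual$ is dual to $\Delta$, iterating gives $\epsilon_+^n(x)=\epsilon_+^{\otimes n}(\Delta^{[n]}x)$ for $x\in\calhn$; as $\epsilon_+$ vanishes outside degree $1$, only the projection to $\calh_1^{\otimes n}$ survives, yielding $\eta_n=\epsilon_+^n$ in $\calhdual$. Next I would verify that $\epsilon_+$ is primitive in $\calhdual$: its coproduct there is dual to $m$ and respects the grading, so it lies in $\calh_0^*\otimes\calh_1^*\oplus\calh_1^*\otimes\calh_0^*$, and evaluating $(\Delta\epsilon_+)(x\otimes y)=\epsilon_+(xy)$ at $x=1_\calh,\ y\in\calb_1$ (and symmetrically) pins down $\Delta\epsilon_+=1\otimes\epsilon_+ + \epsilon_+\otimes 1$.

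Having rewritten $\eta_n$ as the $n$-th power of a primitive, I would apply the standard multinomial identity
\[
\Delta^{[l(D)]}(\epsilon_+^n)=\sum_{k_1+\cdots+k_{l(D)}=n}\binom{n}{k_1\dots k_{l(D)}}\epsilon_+^{k_1}\otimes\cdots\otimes\epsilon_+^{k_{l(D)}},
\]
which follows inductively from $\Delta$ being an algebra homomorphism together with $\Delta\epsilon_+=1\otimes\epsilon_+ + \epsilon_+\otimes 1$. Projecting to the graded components of degrees $(d_1,\dots,d_{l(D)})$ keeps only the single summand $\binom{n}{D}\epsilon_+^{d_1}\otimes\cdots\otimes\epsilon_+^{d_{l(D)}}$, and remultiplying in $\calhdual$ gives $m\Delta_D(\eta_n)=\binom{n}{D}\eta_n$. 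Summing against $P$ then yields
\[
m\Delta_P(\eta_n)=\sum_D\frac{P(D)}{\binom{n}{D}}m\Delta_D(\eta_n)=\sum_D P(D)\,\eta_n=\eta_n,
\]
as required. The only subtle step is the primitivity of $\epsilon_+$; once that is in hand the calculation is essentially bookkeeping, and the $\binom{n}{D}$ normalisations in Definition \ref{defn:descent-operator} are designed precisely to cancel the multinomial coefficients that appear when an iterated coproduct expands a power of a primitive.
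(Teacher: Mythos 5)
Your proof is correct and follows essentially the same route as the paper's: identify $\eta|_{\calhn}$ with the $n$-th power $(\bullet^*)^n$ of the degree-one functional in $\calhdual$, use the fact that the dual of $m\Delta_P$ is the same descent operator on $\calhdual$, expand the iterated coproduct of this power multinomially, and observe that the projection to degrees $(d_1,\dots,d_{l(D)})$ produces exactly the factor $\binom{n}{D}$ cancelled by the normalisation in $m\Delta_P$. Your explicit verification that $\epsilon_+$ is primitive is a minor presentational addition (the paper gets the same expansion directly from $\deg\bullet^*=1$ and gradedness), not a different argument.
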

\begin{proof}
Let $\bullet^{*}\in\calhdual_{1}$ denote the linear map on $\calh_{1}$
taking value $1$ on each element of $\calb_{1}$. (So, if $\calb_{1}=\{\bullet\}$,
then this map is the dual basis element $\bullet^{*}$, hence the
notation.) Since multiplication in $\calhdual$ is dual to the coproduct
on $\calh$, it is true that $\eta$ restricted to $\calhn$ is $(\bullet^{*})^{n}$.

As noted in Remark 2 of Section \ref{sub:Descent-operators-background},
the dual map to a descent operator $m\Delta_{P}:\calhn\rightarrow\calhn$
is the same descent operator on the dual Hopf algebra $\calhdual_{n}$.
So it suffices to show that $(\bullet^{*})^{n}$ is a 1-eigenvector
of $m\Delta_{P}:\calhndual\rightarrow\calhndual$. By linearity, this
will follow from $(\bullet^{*})^{n}$ being a $\binom{n}{D}$-eigenvector
of $m\Delta_{D}:\calhndual\rightarrow\calhndual$ for each weak-composition
$D$.

Write $l$ for the number of parts in $D$. As $\deg(\bullet^{*})=1$,
the iterated coproduct sends $\bullet^{*}$ to $\Delta^{[l]}(\bullet^{*})=\bullet^{*}\otimes1\otimes\dots\otimes1\ +\ 1\otimes\bullet^{*}\otimes1\otimes\dots\otimes1\ +\ \dots\ +\ 1\otimes\dots\otimes\bullet^{*}$,
i.e. the sum of $l$ terms, each with $l$ tensorands, one of which
is $\bullet^{*}$ and all others are 1. Because of the compatibility
of product and coproduct, 
\begin{align*}
\Delta^{[l]}((\bullet^{*})^{n}) & =\left(\bullet^{*}\otimes1\otimes\dots\otimes1\ +\ 1\otimes\bullet^{*}\otimes1\otimes\dots\otimes1\ +\ \dots\ +\ 1\otimes\dots\otimes\bullet^{*}\right)^{n}\\
 & =\sum_{i_{1},\dots,i_{l}}\binom{n}{i_{1}\dots i_{l}}\left(\bullet^{*}\right)^{i_{1}}\otimes\dots\otimes\left(\bullet^{*}\right)^{i_{l}}.
\end{align*}
Hence $\Delta_{D}((\bullet^{*})^{n})=\binom{n}{D}\left(\bullet^{*}\right)^{d_{1}}\otimes\dots\otimes\left(\bullet^{*}\right)^{d_{l}}$,
so $m\Delta_{D}((\bullet^{*})^{n})=\binom{n}{D}((\bullet^{*})^{n})$.
\end{proof}
So it is indeed possible to apply the Doob transform to $m\Delta_{P}$
in a state space basis, with this choice of $\eta$. 

To obtain a more intuitive interpretation of the Markov chain driven
by $m\Delta_{P}$, appeal to this description of the cut-and-interleave
shuffles of \cite{cppriffleshuffle} (recall from Example \ref{ex:shufflealg-bg}
that this is the case with the shuffle algebra):
\begin{enumerate}[label=\arabic*.]
\item Choose a weak-composition $\left(d_{1},\dots,d_{l(D)}\right)$ of
$n$ according to the distribution $P$.
\item Cut the deck so the first pile contains $d_{1}$ cards, the second
pile contains $d_{2}$ cards, and so on. 
\item Drop the cards on-by-one from the bottom of one of the $l(D)$ piles,
chosen with probability proportional to the current pile size. 
\end{enumerate}
Theorem \ref{thm:cppchain} gives an analogous description of the
chain driven by $m\Delta_{P}$ on any Hopf algebra, separating it
into a breaking part (steps 1 and 2) followed by a recombination (step
3). The probabilities involved in both stages are expressed in terms
of the rescaling function $\eta$ and the \emph{structure constants}
of $\calh$: these are the numbers $\xi_{z_{1,}\dots,z_{l}}^{y},\eta_{x}^{z_{1},\dots,z_{l}}$
defined by 
\[
z_{1}\dots z_{l}=\sum_{y\in\calb}\xi_{z_{1,}\dots,z_{l}}^{y}y,\quad\Delta^{[l]}(x)=\sum_{z_{1},\dots,z_{l}\in\calb}\eta_{x}^{z_{1},\dots,z_{l}}z_{1}\otimes\dots\otimes z_{l},
\]
for $x,y,z_{1},\dots,z_{l}$ in the distinguished basis $\calb$.
\begin{thm}[Definition of descent operator chains]
\label{thm:cppchain}  \cite[Def. 3.1]{cpmcfpsac} Let $P$ be a
probability distribution on the weak-compositions of a fixed integer
$n$. Let $\calh=\bigoplus_{n\geq0}\calhn$ be a graded connected
Hopf algebra over $\mathbb{R}$ with each $\calhn$ finite-dimensional,
and $\calb=\amalg_{n\geq0}\calbn$ a state space basis of $\calh$
for $P$. As above, define functions $m\Delta_{P}:\calhn\rightarrow\calhn$
and $\eta:\calh\rightarrow\mathbb{R}$ by
\[
m\Delta_{p}:=\sum\frac{P(D)}{\binom{n}{d_{1}\dots d_{l(D)}}}m\Delta_{D};
\]
\[
\eta(x):=\mbox{sum of coefficients (in the }\calb_{1}\otimes\dots\otimes\calb_{1}\mbox{ basis) of }\Delta_{1,\dots,1}(x).
\]
Then 
\[
\hatk(x,y):=\frac{\eta(y)}{\eta(x)}\mbox{ coefficient of }y\mbox{ in }m\Delta{}_{P}(x)
\]
gives a transition matrix. Each step of this Markov chain, starting
at $x\in\calbn$, is equivalent to the following three-step process:
\begin{enumerate}[label=\arabic*.]
\item Choose a weak-composition $\left(d_{1},\dots,d_{l(D)}\right)$ of
$n$ according to the distribution $P$.
\item Choose $z_{1}\in\calb_{d_{1}},z_{2}\in\calb_{d_{2}},\dots,z_{l(D)}\in\calb_{d_{l(D)}}$
with probability $\frac{1}{\eta(x)}\eta_{x}^{z_{1},\dots,z_{l(D)}}\eta(z_{1})\dots\eta(z_{l(D)})$.
\item Choose $y\in\calbn$ with probability $\left(\binom{n}{D}\eta(z_{1})\dots\eta(z_{l(D)})\right)^{-1}\xi_{z_{1},\dots,z_{l(D)}}^{y}\eta(y)$.
\end{enumerate}
\end{thm}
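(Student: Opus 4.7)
The proof has two logically separate parts, and I would tackle them in turn.

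First, to show that $\hatk$ defines a transition matrix, I would simply verify the hypotheses of the Doob transform (Theorem~\ref{thm:doob-transform}) applied to $\T = m\Delta_P$ with rescaling function $\eta$. The non-negativity of the matrix $[m\Delta_P]_{\calb}^T$ follows from the definition of a state space basis: for each $D$ with $P(D) > 0$, conditions (i) and (ii) guarantee the non-negativity of all $\xi^y_{z_1,\ldots,z_{l(D)}}$ and $\eta^{z_1,\ldots,z_{l(D)}}_x$, and the coefficient of $y$ in $m\Delta_D(x)$ is a non-negative linear combination of products of these. Positivity of $\eta$ on $\calb$ is condition (iii), and Lemma~\ref{lem:eta} supplies the 1-eigenvector property $(m\Delta_P)^*\eta = \eta$. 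So Theorem~\ref{thm:doob-transform} immediately gives that $\hatk(x,y) = \frac{\eta(y)}{\eta(x)}\cdot(\text{coeff of }y\text{ in }m\Delta_P(x))$ is a transition matrix.

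Next, I would show that the three-step process produces the same probabilities. Expanding $\Delta_D(x) = \sum \eta_x^{z_1,\ldots,z_{l(D)}} z_1 \otimes \cdots \otimes z_{l(D)}$ and $z_1 \cdots z_{l(D)} = \sum \xi^y_{z_1,\ldots,z_{l(D)}} y$, the coefficient of $y$ in $m\Delta_D(x)$ is $\sum_{z_\bullet} \eta_x^{z_1,\ldots,z_{l(D)}} \xi^y_{z_1,\ldots,z_{l(D)}}$. Multiplying the probabilities of steps 1--3 and summing over intermediate $(D, z_1, \ldots, z_{l(D)})$ yields
\[
\frac{\eta(y)}{\eta(x)} \sum_D \frac{P(D)}{\binom{n}{D}} \sum_{z_\bullet} \eta_x^{z_1,\ldots,z_{l(D)}} \xi^y_{z_1,\ldots,z_{l(D)}} = \frac{\eta(y)}{\eta(x)}(\text{coeff of }y\text{ in }m\Delta_P(x)) = \hatk(x,y),
\]
after the $\eta(z_i)$ factors cancel between steps 2 and 3.

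The remaining obstacle, and the only part requiring genuine computation, is to verify that steps 2 and 3 individually yield legitimate probability distributions (i.e.\ that the weights in step 2 sum to 1 over choices of $(z_1,\ldots,z_{l(D)})$ given $D$, and those in step 3 sum to 1 over $y$ given the $z_i$). Both reductions hinge on the identity
\[
\eta(z_1 z_2 \cdots z_{l(D)}) = \binom{n}{D}\, \eta(z_1) \cdots \eta(z_{l(D)}),
\]
together with its dual for coproducts. I would prove this by recalling from the proof of Lemma~\ref{lem:eta} that $\eta|_{\calhn} = (\bullet^*)^n$ and that $\Delta^{[l]}((\bullet^*)^n) = \sum \binom{n}{i_1\ldots i_l}(\bullet^*)^{i_1}\otimes\cdots\otimes(\bullet^*)^{i_l}$; pairing this with $z_1\otimes\cdots\otimes z_{l(D)}$ (only the term with $i_k = d_k$ survives) gives the displayed formula. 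For step 3 this says $\sum_y \xi^y_{z_1,\ldots,z_{l(D)}} \eta(y) = \binom{n}{D}\eta(z_1)\cdots\eta(z_{l(D)})$, making the weights sum to 1. For step 2, the same computation applied to $((\bullet^*)^{d_1}\otimes\cdots\otimes(\bullet^*)^{d_{l(D)}})(\Delta_D x)$ shows $\sum_{z_\bullet} \eta_x^{z_1,\ldots,z_{l(D)}} \eta(z_1)\cdots\eta(z_{l(D)}) = \eta(x)$, again making the weights sum to 1. With both conditional distributions verified, the three-step process is well-defined and, by the calculation above, reproduces $\hatk$.
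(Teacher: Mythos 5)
Your proposal is correct and follows essentially the same route as the paper: invoke the Doob transform (Theorem~\ref{thm:doob-transform}) with $\eta$ as rescaling function for the transition-matrix claim, use the identification $\eta|_{\calhn}=(\bullet^{*})^{n}$ together with $\Delta^{[l]}((\bullet^{*})^{n})=\sum\binom{n}{i_{1}\dots i_{l}}(\bullet^{*})^{i_{1}}\otimes\dots\otimes(\bullet^{*})^{i_{l}}$ to verify that the step-2 and step-3 weights sum to one, and then observe that the $\eta(z_{i})$ factors cancel so the composite three-step probability equals $\hatk(x,y)$. The only difference is that you spell out the non-negativity hypotheses of the Doob transform from the state-space-basis conditions, which the paper leaves implicit.
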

Note that the probabilities of the choices in steps 2 and 3 depend
only on the Hopf algebra, not on the probability distribution $P$.
\begin{proof}[Proof of Theorem \ref{thm:cppchain}]
That $\hatk(x,y)$ is a transition matrix follows from Theorem \ref{thm:doob-transform},
the Doob transform for linear maps. What follows will check that the
probabilities under the three-step process agree with $\hatk(x,y)$.
This is easiest using the alternative characterisation of $\eta$
from the proof of Proposition \ref{lem:eta}: $\eta=(\bullet^{*})^{n}$
where $\bullet^{*}\in\calhdual_{1}$ is the linear map sending all
elements of $\calb_{1}$ to 1. Recall also that $\Delta^{[l]}$ is
the iterated coproduct $(\Delta\otimes\id^{\otimes l(D)-1})\circ\dots\circ(\Delta\otimes\id\otimes\id)\circ(\Delta\otimes\id)\circ\Delta$.

First check that, for each weak-composition $D$, the probabilities
in step 2 do sum to 1: 
\begin{align*}
 & \sum_{z_{1}\in\calb_{d_{1}},\dots,z_{l(D)}\in\calb_{d_{l(D)}}}\eta_{x}^{z_{1},\dots,z_{l(D)}}\eta(z_{1})\dots\eta(z_{l(D)})\\
= & \left(\left(\bullet^{*}\right)^{d_{1}}\otimes\dots\otimes\left(\bullet^{*}\right)^{d_{l(D)}}\right)\left(\sum_{z_{1}\in\calb_{d_{1}},\dots,z_{l(D)}\in\calb_{d_{l(D)}}}\eta_{x}^{z_{1},\dots,z_{l(D)}}z_{1}\otimes\dots\otimes z_{l(D)}\right)\\
= & \left(\left(\bullet^{*}\right)^{d_{1}}\otimes\dots\otimes\left(\bullet^{*}\right)^{d_{l(D)}}\right)\left(\Delta_{D}(x)\right)\\
= & \left(\left(\bullet^{*}\right)^{d_{1}}\otimes\dots\otimes\left(\bullet^{*}\right)^{d_{l(D)}}\right)\left(\Delta^{[l(D)]}(x)\right)\\
= & \left(\bullet^{*}\right)^{n}(x)\\
= & \eta(x),
\end{align*}
where the third equality is because $\left(\bullet^{*}\right)^{d}(z_{i})=0$
if $\deg(z_{i})\neq d$, and the fourth equality is by definition
of the product of $\calhdual$. And similarly for the probabilities
in step 3, the combining step:
\begin{align*}
\sum_{y\in\calbn}\xi_{z_{1},\dots,z_{l(D)}}^{y}\eta(y) & =\left(\bullet^{*}\right)^{n}\left(\sum_{y\in\calbn}\xi_{z_{1},\dots,z_{l(D)}}^{y}y\right)\\
 & =\left(\bullet^{*}\right)^{n}(z_{1}\dots z_{l(D)})\\
 & =\Delta^{[l(D)]}((\bullet^{*})^{n})(z_{1}\otimes\dots\otimes z_{l(D)})\\
 & =\left(\sum_{D':l(D')=l(D)}\binom{n}{D'}\left(\bullet^{*}\right)^{d'_{1}}\otimes\dots\otimes\left(\bullet^{*}\right)^{d'_{l(D)}}\right)(z_{1}\otimes\dots\otimes z_{l(D)})\\
 & =\binom{n}{D}\eta(z_{1})\dots\eta(z_{a}),
\end{align*}
where the last equality again relies on the fact that $\left(\bullet^{*}\right)^{d}(z_{i})=0$
if $\deg(z_{i})\neq d$. Finally, the probability of moving from $x$
to $y$ under the three-step process is 
\begin{align*}
 & \sum_{D}P(D)\sum_{z_{i}\in\calb_{i}}\frac{\eta_{x}^{z_{1},\dots,z_{l(D)}}\eta(z_{1})\dots\eta(z_{l(D)})}{\eta(x)}\frac{\xi_{z_{1},\dots,z_{l(D)}}^{y}\eta(y)}{\binom{n}{D}\eta(z_{1})\dots\eta(z_{l(D)})}\\
= & \frac{\eta(y)}{\eta(x)}\sum_{D}\frac{P(D)}{\binom{n}{D}}\sum_{z_{i}\in\calb_{i}}\xi_{z_{1},\dots,z_{l(D)}}^{x}\eta_{y}^{z_{1},\dots,z_{l(D)}}\\
= & \hatk(x,y).
\end{align*}

\end{proof}

\subsection{Eigenvalues and multiplicities\label{sub:evalue}}

Recall from Proposition \ref{prop:doobtransform-efns} that the eigenvalues
for a Markov chain from the Doob transform are simply the eigenvalues
of the associated linear map. Hence, to obtain the spectrum of the
breaking-and-recombination chains of the previous section, it suffices
to calculate the spectrum of the descent operators $m\Delta_{P}$.
The completely general spectrum formula, valid for all $m\Delta_{P}$
and all $\calh$, is rather unsightly, but it simplifies neatly for
many examples of interest, such as Examples \ref{ex:t2r-evalues},
\ref{ex:riffle-binter-evalues}. 

The eigenvalues of $m\Delta_{P}$ are indexed by \emph{partitions}
- these are usually written as tuples $\lambda=(\lambda_{1},\dots,\lambda_{l(\lambda)})$
of integers with $\lambda_{1}\geq\dots\geq\lambda_{l(\lambda)}>0$,
but it will be more convenient here to forget the decreasing ordering
and view them simply as multisets of positive integers. The values
of these eigenvalues themselves are related to \emph{set compositions}
(also known as ordered set partitions): a set composition $B_{1}|\dots|B_{l}$
of a set $S$ is simply an $l$-tuple of disjoint subsets of $S$
with $B_{1}\amalg\dots\amalg B_{l}=S$. The \emph{blocks} $B_{i}$
are allowed to be empty (so perhaps the correct terminology is ``weak
set composition''). The \emph{type} of a set composition is the weak-composition
of cardinalities $(|B_{1}|,\dots,|B_{l}|)$. If $S=\{1,\dots,n\}$,
then the symmetric group $\sn$ acts on the set compositions of $S$
of any given type. For example, $B=\{2,5\}|\ |\{1,4\}|\{3\}$ is a
set composition of $\{1,2,3,4,5\}$ of type $(2,0,2,1)$. The permutation
$\sigma=42351$ sends $B$ to $\sigma(B)=\{\sigma(2),\sigma(5)\}|\ |\{\sigma(1),\sigma(4)\}|\{\sigma(3)\}=\{1,2\}|\ |\{4,5\}|\{3\}$,
and the transpositions $(25)$ and $(14)$ both fix $B$. 
\begin{thm}[Eigenvalues of descent operators]
\label{thm:evalues}  Let $\calh=\bigoplus\calhn$ be a graded connected
Hopf algebra over $\mathbb{R}$, and $P$ a probability distribution
on weak-compositions of a fixed integer n. As usual, write $m\Delta_{P}$
for the associated descent operator 
\[
m\Delta_{P}:=\sum_{D}\frac{P(D)}{\binom{n}{D}}m\Delta_{D}.
\]
The eigenvalues of $m\Delta_{P}:\calhn\rightarrow\calhn$ are 
\[
\beta_{\lambda}^{P}:=\sum_{D}\frac{P(D)}{\binom{n}{D}}\beta_{\lambda}^{D},
\]
where $\beta_{\lambda}^{D}$ is the number of set compositions $B_{1}|\dots|B_{l(D)}$
of $\{1,2,\dots,l(\lambda)\}$ such that, for each $i$, we have $\sum_{j\in B_{i}}\lambda_{j}=d_{i}$.
The multiplicity of the eigenvalue $\beta_{\lambda}^{P}$ is the coefficient
of $x_{\lambda}:=x_{\lambda_{1}}\dots x_{\lambda_{l(\lambda)}}$ in
the generating function $\prod_{i}(1-x_{i})^{-b_{i}}$, where the
numbers $b_{i}$ satisfy 
\[
\sum_{n}\dim\calhn x^{n}=\prod_{i}(1-x^{i})^{-b_{i}}.
\]
Futhermore, $m\Delta_{P}$ is diagonalisable if $\calh$ is commutative
or cocommutative.
\end{thm}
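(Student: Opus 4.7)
The approach leverages Proposition~\ref{prop:descentoperator-composition}: when $\calh$ is commutative or cocommutative, $\cppmap\colon(\sym_n,\iprod)\to\End(\calhn)$ is an algebra (anti-)homomorphism and sends $S_P := \sum_D \frac{P(D)}{\binom{n}{D}} S^D$ to $m\Delta_P$, so the spectrum of $m\Delta_P$ is governed by the internal-product action of $S_P$ on $\sym_n$. The heart of the argument is the cocommutative case, where I can diagonalise explicitly; the commutative case falls out by duality (using Remark~2 of Section~\ref{sub:Descent-operators-background}), and the general case by a passage to an associated graded.

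For cocommutative $\calh$, I invoke Cartier-Milnor-Moore to write $\calh \cong U(\mathfrak{p})$, where $\mathfrak{p} = \bigoplus_i \mathfrak{p}_i$ is the graded Lie algebra of primitives with $b_i := \dim \mathfrak{p}_i$. A PBW basis of $\calhn$ consists of ordered monomials $y = x_1 \cdots x_r$ in a homogeneous basis of $\mathfrak{p}$, whose multiset of degrees is a partition $\lambda$ of $n$. Evaluating $m\Delta_D$ on such a $y$: primitivity forces the iterated coproduct to place each $x_k$ whole into one of the $l(D)$ slots, and projecting to the $D$-graded tensor component sums precisely over set compositions $B_1|\cdots|B_{l(D)}$ of $\{1,\dots,r\}$ with $\sum_{k\in B_i}\lambda_k = d_i$; reassembly by $m$ then returns a rearranged monomial of the same type. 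Passing to the Eulerian / Garsia-Reutenauer orthogonal idempotent decomposition of $U(\mathfrak{p})$ (engineered to diagonalise such descent operators), $m\Delta_D$ acts on the $\lambda$-isotypic subspace as the scalar $\beta_\lambda^D$, whence $m\Delta_P$ has eigenvalue $\beta_\lambda^P$ there.

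The multiplicity of $\beta_\lambda^P$ is the dimension of that $\lambda$-isotypic subspace, namely the number of unordered monomials in primitives with degree multiset $\lambda$: choosing with repetition $m_i := \#\{k : \lambda_k = i\}$ primitives from the $b_i$-dimensional space $\mathfrak{p}_i$ gives $\binom{b_i+m_i-1}{m_i}$ options, and the product over $i$ is exactly the coefficient of $x_\lambda$ in $\prod_i(1-x_i)^{-b_i}$. The identity $\sum_n \dim\calhn\,x^n = \prod_i(1-x^i)^{-b_i}$ then recovers the $b_i$ as the standard PBW-Poincaré dimensions of $\mathfrak{p}$. Diagonalisability in the cocommutative case is built into the isotypic decomposition; the commutative case dualises to this cocommutative one, since $(m\Delta_P)^* = m\Delta_P$ on $\calhdual$ and $\dim \calhndual = \dim \calhn$.

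The main obstacle is the general case, where $\cppmap$ is no longer multiplicative and diagonalisability may fail. I would pass to the associated graded $\gr\calh$ for the coradical filtration; since $\calh$ is graded connected, hence pointed, $\gr\calh$ is cocommutative with the same graded dimensions. Because $m$ and $\Delta$ both respect the coradical filtration on a connected graded Hopf algebra, $m\Delta_P$ preserves the induced filtration on each $\calhn$ and becomes block-triangular in any filtration-adapted basis, its diagonal blocks realising $m\Delta_P$ on $\gr\calh$. Its characteristic polynomial on $\calhn$ thus coincides with the one computed in the cocommutative case, which gives the claimed eigenvalues with the claimed multiplicities; diagonalisability is potentially lost because the off-diagonal blocks need not vanish.
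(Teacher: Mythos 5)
Your overall architecture (prove the cocommutative case, dualise to get the commutative case, and reduce the general case to the associated graded of the coradical filtration) is the same as the paper's, up to a minor slip: for the coradical filtration of a graded connected Hopf algebra, $\gr(\calh)$ is \emph{commutative}, not cocommutative (Sweedler, Aguiar--Sottile); this is harmless for you since you treat the commutative case by duality anyway. The genuine gap is in your cocommutative core. You claim that on the Garsia--Reutenauer $\lambda$-isotypic component of $U(\mathfrak p)$ each $m\Delta_{D}$ acts as the scalar $\beta_{\lambda}^{D}$. That is false unless $\calh$ is also commutative. Concretely, take $\calh$ to be the free associative algebra on primitives $a$ (degree $1$) and $c$ (degree $2$); the $\lambda=(2,1)$ component of $\calh_{3}$ is spanned by the symmetrised product $ac+ca$, but $m\Delta_{1,2}(ac+ca)=2ac$, which is not a multiple of $ac+ca$ (the $\beta_{(2,1)}^{(1,2)}=1$ eigenvector inside $\sspan\{ac,ca\}$ is $ac$, whereas for $m\Delta_{2,1}$ it is $ca$). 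Structurally this had to fail: $\cppmap(S^{D})\circ\cppmap(S^{D'})=\cppmap(S^{D}\iprod S^{D'})$ and the internal product is noncommutative, so descent operators on a cocommutative-but-noncommutative $\calh$ do not commute and cannot all be diagonalised by one $P$-independent decomposition; the eigenvectors must depend on $P$ (indeed on $D$).

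What your set-composition computation does give is that $m\Delta_{P}$ preserves the span of the permuted products of any fixed multiset of primitives and, by the PBW straightening lemma, is triangular with respect to PBW length with diagonal entries $\beta_{\lambda}^{P}$; that would recover the eigenvalues and their algebraic multiplicities (your counting of multisets, giving the coefficient of $x_{\lambda}$ in $\prod_{i}(1-x_{i})^{-b_{i}}$, is correct), but it does not give diagonalisability, which is exactly the part of the theorem that needs a further idea. The paper supplies it in two ways: in the first proof, the matrix of $m\Delta_{P}$ on $\sspan\{p_{\sigma(1)}\dots p_{\sigma(k)}\}$ has all column sums equal to $\beta_{\deg(\mathbf{p})}^{P}$, so the Perron--Frobenius theorem produces a genuine eigenvector $\sum_{\sigma}\kappa_{\sigma}p_{\sigma(1)}\dots p_{\sigma(k)}$ with non-negative, $P$-dependent coefficients (Lemma \ref{lem:symlemma}), and triangularity then assembles these into an eigenbasis; in the second proof, one uses the known diagonalisability of left internal multiplication $L_{S^{P}}$ on $\sym$, takes its eigenprojections $E_{\lambda}^{P}$ (which are not the Garsia--Reutenauer idempotents), shows $\im\cppmap(E_{\lambda}^{P})$ consists of $\beta_{\lambda}^{P}$-eigenvectors, and only then compares dimensions with $\im\cppmap(E_{\lambda})$ via Lemma \ref{lem:idempotents}. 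As written, your argument neither establishes that $\beta_{\lambda}^{P}$ occurs with the stated multiplicity as an eigenvalue (since the claimed eigenspaces are not eigenspaces) nor the diagonalisability assertion, so the cocommutative step needs to be replaced by one of these routes or an equivalent.
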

Observe that, under the mild condition $b_{i}>0$ for all $i$ (i.e.
$(\gr\calh)^{*}$ contains primitives in every degree, by five paragraphs
below), the eigenvalues of a descent operator depend only on the associated
probability distribution $P$, not on the Hopf algebra it acts on.
By contrast, in the generic case where all $\beta_{\lambda}^{P}$
are distinct, their multiplicities depend only on the Hopf algebra
(in fact, only on the dimensions of its graded subspaces) and not
on the distribution $P$.

The following two interpretations of the eigenvalues $\beta_{\lambda}^{P}$
are sometimes useful:
\begin{enumerate}[label=\arabic*.]
\item $\bld$ is the number of set compositions of $\{1,2,\dots,n\}$ of
type $D$ which are fixed under the action of any particular permutation
of cycle type $\lambda$ (since this forces each cycle to lie in the
same block). Hence $\blp$ is the probability that a particular permutation
of cycle type $\lambda$ fixes a random set composition chosen in
the following way: choose a weak-composition $D$ according to $P$,
then choose uniformly amongst the set compositions of type $D$. For
many interesting probability distributions $P$, this choice procedure
is not as contrived as it may sound - see Example \ref{ex:riffle-binter-evalues}.
\item By \cite[Prop. 7.7.1, Eq. 7.30]{stanleyec2}, $\beta_{\lambda}^{P}=\langle\underline{S^{P}},p_{\lambda}\rangle$,
the inner product of the power sum symmetric function $p_{\lambda}$
with the commutative image of the noncommutative symmetric function
$S^{P}:=\sum_{D}\frac{P(D)}{\binom{n}{D}}S^{D}$ (i.e. with the linear
combination of complete symmetric functions $\sum_{D}\frac{P(D)}{\binom{n}{D}}h_{D}$).
\end{enumerate}
Note that the numbers $\bld$ depend only on the sizes of the parts
of $D$, not on their order. Also, the eigenvalues $\blp$ need not
be distinct for different partitions $\lambda$; see the example below.
\begin{example}
\label{ex:t2r-evalues} Take $P$ to be concentrated at $(1,n-1)$,
so $m\Delta_{P}$ induces the top-to-random card-shuffle. Then $\beta_{\lambda}^{(1,n-1)}$
is the number of parts of size 1 in $\lambda$, which can be $0,1,\dots,n-2,$
or $n$. So the eigenvalues of a top-to-random chain on any Hopf algebra
are $\blp=\frac{1}{n}\beta_{\lambda}^{(1,n-1)}=0,\frac{1}{n},\frac{2}{n},\dots,\frac{n-2}{n},1$.
 Alternatively, by Interpretation 1 above, $\blp$ is the proportion
of set compositions of type $(1,n-1)$ fixed by any particular permutation
of cycle type $\lambda$ - this is simply the proportion of fixed
points of the permutation, since set compositions of type $(1,n-1)$
are entirely determined by the single element in their first block.

The top-to-random chain is one of the rare examples of a descent operator
chain that admits an explicit diagonalisation on cocommutative Hopf
algebras, see Theorem \ref{thm:tob2r-evectors-cocommutative}.
\end{example}

\begin{example}
\label{ex:riffle-binter-evalues} We apply Interpretation 1 above
to two examples.

First, take $P$ to be the binomial distribution on weak-compositions
with two parts, so $m\Delta_{P}=m\Delta$, inducing the riffle-shuffle
(see the paragraph after Definition \ref{defn:descent-operator}).
Then the process in Interpretation 1 uniformly chooses one of the
$2^{n}$ set compositions with two parts. Since each such set composition
is entirely determined by its first block, Interpretation 1 says that
the eigenvalues $\blp$ are the proportions of subsets of $\{1,\dots,n\}$
fixed by a permutation of cycle type $\lambda$. Being fixed under
the permutation means that these are subsets of its cycles - hence
$\blp=\frac{2^{l(\lambda)}}{2^{n}}$, as shown in \cite[Th. 3.15, 3.16]{hopfpowerchains}.

Now consider a variant where $P$ is supported only on distributions
of the form $(1^{r},n-r)$ for $0\leq r\leq n$, and let $r$ be binomially
distributed. (Here, $1^{r}$ denotes $r$ consecutive parts of size
1.) For this ``binomial-top-to-random'' operator (Definition \ref{defn:tob2r},
with $q=\frac{1}{2}$), $\blp$ is the proportion of subsets fixed
pointwise by a permutation of cycle type $\lambda$. These fixed subsets
are precisely the subsets of the fixed points of the permutation.
So, if there are $j$ fixed points (i.e. $\lambda$ has $j$ parts
of size 1), then the eigenvalue $\blp$ is $\frac{2^{j}}{2^{n}}$.

So both these descent operators have non-positive powers of $2$ as
their eigenvalues, but with different multiplicities. Each fixed partition
$\lambda$ has more parts in total than parts of size 1, so its corresponding
eigenvalue is larger for $m\Delta$ than for the binomial-top-to-random
operator. In the case of card-shuffling, this agrees with intuition:
having cut the deck according to a symmetric binomial distribution,
reinserting the top half of the deck without preserving the relative
order of the cards will randomise the deck faster. \cite[Proof of Cor. 3]{originalriffleshuffle}
made the same comparison; instead of eigenvalues, they looked at the
mixing time, which is $\frac{3}{2}\log_{2}n$ for the riffle-shuffle,
and $\log_{2}n$ for binomial-top-to-random. 
\end{example}
Below are two very different proofs of the spectrum of a descent operator.
The first is probabilistically-inspired, and its key ideas aid in
the construction of eigenvectors in Sections \ref{sub:tob2r-efns}
and \ref{sub:tab2r}. The second comes from assembling known theorems
on noncommutative symmetric functions; this proof was outlined by
the reviewer of \cite{cpmcfpsac}. Both are included in the hope that
they lead to generalisations for different classes of operators.

Both proofs begin by reducing to the case where $\calh$ is cocommutative;
by duality, this will also imply the case for commutative $\calh$.
This reduction follows the argument of \cite[Th. 3]{diagonalisingusinggrh}.
As explained in their Section 1.3, the \emph{coradical filtration}
of a graded connected Hopf algebra $\calh$ is defined as $\calh^{(k)}=\calh_{0}\oplus\bigoplus_{D}\ker\Delta_{D}$,
where the sum ranges over all (strict) compositions $D$ with $k$
parts. The associated graded algebra of $\calh$ with respect to this
filtration, written $\gr(\calh)$, is a Hopf algebra. Every linear
map $\T:\calh\rightarrow\calh$ preserving the coradical filtration
induces a map $\gr(\T):\gr(\calh)\rightarrow\gr(\calh)$ with the
same eigenvalues and multiplicities. Now $m\Delta_{P}$ is a (linear
combination of) convolution product of projections $\Proj_{d_{i}}$
to the graded subspace $\calh_{d_{i}}$. Since $\gr(\Proj_{d})=\Proj_{d}$,
and $\T\rightarrow\gr\T$ preserves convolution products, it must
be that $\gr(m\Delta_{P})=m\Delta_{P}$. So it suffices to show that
$m\Delta_{P}:\gr(\calh)\rightarrow\gr(\calh)$ has the claimed eigenvalues
and multiplicities. By  \cite[Th. 11.2.5.a]{sweedler} \cite[Prop. 1.6]{grhiscommutative},
$\gr(\calh)$ is commutative. (So this argument shows that the eigenvalues
and multiplicities of Theorem \ref{thm:evalues} also apply to any
$\T:\calh\rightarrow\calh$ with $\gr(\T)=m\Delta_{P}$, even if $\T$
itself is not a descent operator.)

\subsubsection*{First proof of Theorem \ref{thm:evalues}: Poincare-Birkhoff-Witt
straightening algorithm and Perron-Frobenius theorem}

By the Cartier-Milnor-Moore theorem \cite[Th. 3.8.1]{cmm}, a graded
connected cocommutative Hopf algebra $\calh$ is the universal enveloping
algebra of its subspace of primitives. Consequently, $\calh$ has
a Poincare-Birkhoff-Witt (PBW) basis: if $(\calp,\preceq)$ is an
ordered basis of the primitive subspace of $\calh$, then $\{p_{1}\dots p_{k}|k\in\mathbb{N},p_{1}\preceq\dots\preceq p_{k}\in\calp\}$
is a basis of $\calh$. The basis element $p_{1}\dots p_{k}$ has
\emph{length} $k$. We will need the following fact, which follows
easily from the ``straightening algorithm'':
\begin{lem}
\label{lem:straightening} \cite[Lem. III.3.9]{pbwref} Let $(\calp,\preceq)$
be an ordered basis of the primitive subspace of $\calh$. If $p_{1},\dots,p_{k}\in\calp$
with \textup{$p_{1}\preceq\dots\preceq p_{k}$,} then, for any $\sigma\in\sk$,
\[
p_{\sigma(1)}\dots p_{\sigma(k)}=p_{1}\dots p_{k}+\mbox{terms of length less than }k.
\]
In particular, the coefficient of the leading term is 1. \qed
\end{lem}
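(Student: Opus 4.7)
The plan is to build on the defining fact that the primitive subspace is closed under the commutator bracket $[p,q] := pq - qp$. A quick check of coproducts shows that if $\Delta(p) = 1 \otimes p + p \otimes 1$ and $\Delta(q) = 1 \otimes q + q \otimes 1$, then the cross terms in $\Delta(pq) - \Delta(qp)$ cancel, giving $\Delta([p,q]) = 1 \otimes [p,q] + [p,q] \otimes 1$. So $[p,q]$ is primitive and may be expanded in the basis $\calp$. This gives the elementary swap $pq = qp + [p,q]$, which transposes two adjacent primitives at the cost of introducing a single primitive (of total length one) at that position. The whole lemma will follow from iterating this swap to bubble-sort the sequence $p_{\sigma(1)},\dots,p_{\sigma(k)}$ into the increasing order.

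I would make this precise with a double induction. The outer induction is on $k$ and proves the slightly stronger statement: for \emph{any} sequence $q_1,\dots,q_k$ drawn from $\calp$, the product $q_1 \cdots q_k$ lies in the span of PBW basis elements of length at most $k$. The case $k=0$ is trivial. Fixing $k$, I run an inner induction on the number of inversions $\inv(\sigma)$ of the permutation $\sigma \in \sk$ describing the order. The inner base case $\inv(\sigma)=0$ says $p_{\sigma(1)} \cdots p_{\sigma(k)} = p_1 \cdots p_k$ is itself the sorted PBW basis element of length $k$, with coefficient $1$.

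For the inner inductive step, pick an adjacent descent: an index $i$ with $\sigma(i) > \sigma(i+1)$. Applying the swap identity at positions $i, i+1$ inside the product yields
\[
p_{\sigma(1)} \cdots p_{\sigma(k)} \;=\; p_{\sigma'(1)} \cdots p_{\sigma'(k)} \;+\; p_{\sigma(1)} \cdots p_{\sigma(i-1)} \, c \, p_{\sigma(i+2)} \cdots p_{\sigma(k)},
\]
where $\sigma' = \sigma \cdot s_i$ has $\inv(\sigma') = \inv(\sigma) - 1$ and $c := [p_{\sigma(i)}, p_{\sigma(i+1)}]$ is primitive. By the inner inductive hypothesis, the first summand equals $p_1 \cdots p_k$ plus terms of length less than $k$. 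Expanding $c$ in the basis $\calp$, the second summand is a linear combination of products of $k-1$ primitives, which by the outer inductive hypothesis lie in the span of PBW elements of length at most $k-1$. Adding these gives the claimed form, and since every reduction step only introduces lower-length remainders while leaving the leading length-$k$ contribution unchanged, the coefficient of $p_1 \cdots p_k$ remains $1$.

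The main conceptual point is just the Lie algebra structure on the primitives coming from cocommutativity; once this is in hand, no genuine obstacle remains, because each bubble-sort swap strictly decreases $\inv(\sigma)$ while producing a remainder strictly shorter in length, so both inductions terminate in a bounded number of steps. The only bookkeeping subtlety is the interleaving of the two inductions: one must ensure the inner induction on $\inv(\sigma)$ is legitimate to invoke before the outer hypothesis for length $k-1$ is used, which is immediate from the fact that the shorter remainder term does not involve the inner parameter at all.
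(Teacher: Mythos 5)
Your proof is correct and is essentially the same argument the paper relies on by citation: Lemma \ref{lem:straightening} is quoted from \cite[Lem. III.3.9]{pbwref} as a consequence of the standard PBW straightening algorithm, which is exactly your combination of the closure of the primitive subspace under the commutator bracket with the double induction on the length $k$ and on $\inv(\sigma)$. Nothing further is needed.
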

The key to this proof is the following variant of \cite[Th. 3.10]{hopfpowerchains}:
\begin{lem}[Symmetrisation Lemma]
\label{lem:symlemma} Let $p_{1},\dots,p_{k}$ be primitive elements
of $\calh$ and let $\deg(\mathbf{p})$ denote the partition $(\deg p_{1},\dots,\deg p_{k})$.
Then $\sspan\{p_{\sigma(1)}\dots p_{\sigma(k)}|\sigma\in\sk\}$ is
an invariant subspace under $m\Delta_{P}$, and contains a $\beta_{\deg(\mathbf{p})}^{P}$-eigenvector
of the form $\sum_{\sigma\in\sk}\kappa_{\sigma}p_{\sigma(1)}\dots p_{\sigma(k)}$
with all $\kappa_{\sigma}\geq0$. \end{lem}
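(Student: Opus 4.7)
The plan is to reduce to the cocommutative case, as described just before the lemma, so that $\calh$ admits a PBW basis built from an ordered basis of primitives. The argument then proceeds in two stages: first, show $V := \sspan\{p_{\sigma(1)} \cdots p_{\sigma(k)} : \sigma \in \sk\}$ is invariant by explicitly computing $m\Delta_D$ on products of primitives; second, produce a non-negative eigenvector by applying the Perron--Frobenius theorem to a non-negative lift of $m\Delta_P|_V$, and identify its eigenvalue using Lemma~\ref{lem:straightening}.

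For invariance, I would use primitivity of each $p_i$ to write $\Delta^{[l]}(p_i) = \sum_{j=1}^{l} 1^{\otimes(j-1)} \otimes p_i \otimes 1^{\otimes(l-j)}$, then apply the bialgebra axiom to get $\Delta^{[l]}(p_1 \cdots p_k) = \prod_{i=1}^k \Delta^{[l]}(p_i)$, a product in the tensor algebra. Expanding this non-commutative product, projecting to $\calh_{d_1} \otimes \cdots \otimes \calh_{d_l}$, and multiplying the tensor factors together yields
\[
m\Delta_D(p_{\sigma(1)} \cdots p_{\sigma(k)}) \;=\; \sum_{f} p_{\sigma\tau_f(1)} \cdots p_{\sigma\tau_f(k)},
\]
summed over functions $f \colon \{1,\dots,k\} \to \{1,\dots,l(D)\}$ satisfying $\sum_{i \in f^{-1}(j)} \deg p_i = d_j$ for each $j$; here $\tau_f \in \sk$ is the permutation listing the preimages $f^{-1}(1), f^{-1}(2), \dots$ in increasing order. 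This exhibits the action of $m\Delta_D$, and hence of $m\Delta_P$, as a non-negative integer combination of $\{p_{\tau(1)} \cdots p_{\tau(k)} : \tau \in \sk\}$, so $V$ is invariant.

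To produce the eigenvector, lift $m\Delta_P|_V$ to a non-negative matrix $\tilde{M}$ on $\mathbb{R}^{k!}$ (with basis $\{e_\sigma : \sigma \in \sk\}$) via the same formula, so that $\pi \circ \tilde{M} = m\Delta_P \circ \pi$ where $\pi(e_\sigma) := p_{\sigma(1)} \cdots p_{\sigma(k)}$. Perron--Frobenius yields a non-zero non-negative eigenvector $\tilde{v} = \sum_\sigma \kappa_\sigma e_\sigma$ of $\tilde{M}$ with eigenvalue the spectral radius $\rho(\tilde{M}) \geq 0$. To identify this eigenvalue, invoke Lemma~\ref{lem:straightening}: extend $\{p_1, \dots, p_k\}$ to an ordered basis $\calp$ of the primitive subspace, sorted so that $p_1 \preceq \cdots \preceq p_k$, and let $\phi \colon \calh \to \mathbb{R}$ read off the coefficient of the PBW basis element corresponding to the multiset $\{p_1, \dots, p_k\}$. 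Then $\phi(p_{\sigma(1)} \cdots p_{\sigma(k)}) = 1$ for every $\sigma$, and the explicit formula above gives
\[
\phi\bigl(m\Delta_P(p_{\sigma(1)} \cdots p_{\sigma(k)})\bigr) \;=\; \sum_D \frac{P(D)}{\binom{n}{D}} \#\{\text{valid } f \text{ for } D\} \;=\; \beta_{\deg(\mathbf{p})}^P,
\]
since the valid $f$'s are precisely the set compositions counted by $\beta_{\deg(\mathbf{p})}^D$. Applying $\phi$ to the eigenvector identity $m\Delta_P(\pi(\tilde{v})) = \rho(\tilde{M}) \pi(\tilde{v})$ then gives $\beta_{\deg(\mathbf{p})}^P \bigl(\sum_\sigma \kappa_\sigma\bigr) = \rho(\tilde{M}) \bigl(\sum_\sigma \kappa_\sigma\bigr)$, and since $\sum_\sigma \kappa_\sigma > 0$ this simultaneously forces $\rho(\tilde{M}) = \beta_{\deg(\mathbf{p})}^P$ and certifies $\pi(\tilde{v}) \neq 0$.

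The main obstacle is the bookkeeping in the explicit formula for $m\Delta_D$ on products of primitives: the iterated coproduct expands as a product of non-commuting tensor-algebra elements, so one must carefully track both which tensor slot each $p_i$ lands in and the order of $p_i$'s within each slot. Once this formula is in hand, the Perron--Frobenius step and the eigenvalue identification via $\phi$ are essentially routine.
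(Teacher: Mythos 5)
Your invariance computation and the lift of $m\Delta_P|_V$ to a non-negative matrix $\tilde{M}$ on $\mathbb{R}^{k!}$ are exactly the paper's device (the paper phrases the lift as working in the free associative algebra on formal primitives $p_i'$ and pushing the result forward along the Hopf morphism $p_i'\mapsto p_i$), and the appeal to Perron--Frobenius is the same. The genuine problem is your identification of the eigenvalue. The lemma is stated for arbitrary primitive elements $p_1,\dots,p_k$ of an arbitrary graded connected $\calh$, and it is invoked later in that generality (e.g.\ in the proofs of Theorem \ref{thm:stationarydistribution} and Theorem \ref{thm:tob2r-evectors-cocommutative}, where $\calh$ is not assumed cocommutative). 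Your opening move, ``reduce to the cocommutative case as described before the lemma'', is not available here: the $\gr$-plus-duality reduction preserves eigenvalues and multiplicities of $m\Delta_P$, but it does not transport an eigenvector of the specific form $\sum_{\sigma}\kappa_\sigma p_{\sigma(1)}\cdots p_{\sigma(k)}$, built from the given primitives of $\calh$, back to $\calh$. Consequently your functional $\phi$ need not exist: it requires a PBW basis (hence cocommutativity via Cartier--Milnor--Moore) and, more restrictively, that $p_1,\dots,p_k$ extend to an ordered basis of the primitive subspace, i.e.\ that the distinct $p_i$ be linearly independent --- none of which is a hypothesis of the lemma.

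The fix is cheaper than the machinery you introduced, and it is what the paper does: since the number of valid assignments $f$ for a given $D$ depends only on the degree multiset $\deg(\mathbf{p})$ and not on the ordering $\sigma$, every column of $\tilde{M}$ sums to $\beta_{\deg(\mathbf{p})}^{P}$; hence the all-ones row vector is a strictly positive left eigenvector, which already forces $\rho(\tilde{M})=\beta_{\deg(\mathbf{p})}^{P}$, and Perron--Frobenius then supplies a non-negative right $\beta_{\deg(\mathbf{p})}^{P}$-eigenvector, whose image under $\pi$ is the required element of $\calh$. (In the setting where the lemma is applied in the first proof of Theorem \ref{thm:evalues} --- $\calh$ cocommutative and the $p_i$ drawn from a basis $\calp$ of primitives --- your $\phi$-argument does work, and it has the small bonus of certifying $\pi(\tilde{v})\neq0$, a point the paper only settles later via Lemma \ref{lem:straightening}; but as a proof of the lemma as stated it does not go through.)
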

\begin{proof}
Work first in the free associative algebra generated by primitive
elements $p_{1}',\dots,p_{k}'$, with $\deg p_{i}'=\deg p_{i}$. (Equivalently,
treat $p_{1},\dots,p_{k}$ as formal variables, ignoring any algebraic
relationships between them.) Since the $p_{i}'$ are primitive, 
\begin{equation}
m\Delta_{D}(p'_{1}\dots p'_{k})=\sum_{B_{1},\dots,B_{l(D)}}\left(\prod_{i\in B_{1}}p'_{i}\right)\left(\prod_{i\in B_{2}}p'_{i}\right)\dots\left(\prod_{i\in B_{l(D)}}p'_{i}\right),\label{eq:descent-operator-on-product-of-primitives}
\end{equation}
summing over all set compositions $B_{1}|\dots|B_{l(D)}$ of $\{1,2,\dots,k\}$
such that $\sum_{j\in B_{i}}\deg(p'_{j})=d_{i}$. So each summand
$m\Delta_{D}$ of $m\Delta_{P}$ fixes the subspace $W:=\sspan\{p'_{\sigma(1)}\dots p'_{\sigma(k)}|\sigma\in\sk\}$,
and hence so does $m\Delta_{P}$ itself. Consider the matrix of the
restricted map $m\Delta_{P}|_{W}$ with respect to the basis $\{p'_{\sigma(1)}\dots p'_{\sigma(k)}|\sigma\in\sk\}$.
(This is indeed a basis because the $p_{i}'$ generate a free associative
algebra.) From taking the appropriate linear combination of Equation
\ref{eq:descent-operator-on-product-of-primitives}, we see that the
sum of the entries in the column corresponding to $p'_{1}\dots p'_{k}$
is $\beta_{\deg(\mathbf{p})}^{P}$. Note that the partition $\deg(\mathbf{p})$,
and hence $\beta_{\deg(\mathbf{p})}^{P}$, is independent of the ordering
of the $p'_{i}$, so all columns of the matrix of $m\Delta_{P}|_{W}$
sum to $\beta_{\deg(\mathbf{p})}^{P}$. So the left (row) vector $(1,1,\dots,1)$
is an eigenvector of this matrix with eigenvalue $\beta_{\deg(\mathbf{p})}^{P}$.
Since this vector has all components positive, and all entries of
this matrix of $m\Delta_{P}|_{W}$ are non-negative, the Perron-Frobenius
theorem \cite[Ch. XIII Th. 3]{perronfrob} states that $\beta_{\deg(\mathbf{p})}^{P}$
is the largest eigenvalue of $m\Delta_{P}|_{W}$, and $m\Delta_{P}|_{W}$
has a (right, column) eigenvector of this eigenvalue with non-negative
entries (note that it is in general not unique). Let $\sum_{\sigma\in\sk}\kappa_{\sigma}p'_{\sigma(1)}\dots p'_{\sigma(k)}$
denote this eigenvector, so 
\[
m\Delta_{P}\left(\sum_{\sigma\in\sk}\kappa_{\sigma}p'_{\sigma(1)}\dots p'_{\sigma(k)}\right)=\beta_{\deg(\mathbf{p})}^{P}\left(\sum_{\sigma\in\sk}\kappa_{\sigma}p'_{\sigma(1)}\dots p'_{\sigma(k)}\right).
\]
Apply to both sides the Hopf morphism sending $p_{i}'$ to $p_{i}$;
this shows that $\sum_{\sigma\in\sk}\kappa_{\sigma}p_{\sigma(1)}\dots p_{\sigma(k)}$
is a $\beta_{\deg(\mathbf{p})}^{P}$-eigenvector of $m\Delta_{P}$
on our starting Hopf algebra.
\end{proof}
\begin{rems*}
$ $
\begin{enumerate}[label=\arabic*.]
\item The transpose of the matrix in the above proof, of $m\Delta_{P}|_{W}$
with respect to the basis $\{p'_{\sigma(1)}\dots p'_{\sigma(k)}|\sigma\in\sk\}$,
is the transition matrix of a hyperplane walk \cite{hyperplanewalk},
scaled by $\beta_{\deg(\mathbf{p})}^{P}$. Informally, this walk is
the pop shuffle associated to $m\Delta_{P}$ (see Example \ref{ex:freeassalg-bg}
above) where the primitive $p_{i}'$ behave like $\deg(p_{i}')$ cards
glued together. So the distribution $\pi(\sigma)=\kappa_{\sigma}$
is a stationary distribution of this chain. The idea of expressing
each member of an eigenbasis in terms of the stationary distribution
of a different chain is also integral to the recent left eigenfunction
formulae for hyperplane walks \cite{hyperplanewalkefns1,hyperplanewalkefns2}.

In extremely simple cases, this view of the coefficients $\kappa_{\sigma}$
as the stationary distribution is surprisingly powerful: take $p_{1}=\dots=p_{j}\in\calh_{1}$,
and let $p_{j+1},\dots,p_{k}$ be primitives of degree greater than
1. The hyperplane walk that $m\Delta_{1,n-1}$ induces on $\sspan\{p{}_{\sigma(1)}\dots p{}_{\sigma(k)}\}$
is the ``random-to-top shuffle'': uniformly choose a card to remove
from the deck and place it on top. Since $p_{j+1},\dots,p_{k}$ represent
multiple cards glued together, these cards are never moved, so in
the stationary distribution, they must be at the bottom of the deck,
in the same relative order as they started. And the order of the single
cards $p_{1},\dots,p_{j}$ at the top of the deck is immaterial since
these cards are identical. So $p_{1}\dots p_{k}$ is an eigenvector
for $m\Delta_{1,n-1}$. In the common scenario where $\dim\calh_{1}=1$,
all multisets of $\calp$ have this form, so the simple argument above
produces a full eigenbasis.

Theorem \ref{thm:tob2r-evectors-cocommutative} is a more complex
argument along the same lines, making use of symmetry to simplify
the required hyperplane walk (see point 3 below). However, for general
descent operators $m\Delta_{P}$, the formula for this stationary
distribution \cite[Th. 2b]{hyperplanewalkstationary} is notoriously
difficult to compute with.

\item If $\calh$ is commutative as well as cocommutative, then the Symmetrisation
Lemma shows that any product of primitive elements is an eigenvector
for $m\Delta_{P}$ for all distributions $P$. Hence $\{p_{1}\dots p_{k}|p_{1}\preceq\dots\preceq p_{k}\in\calp\}$
is an eigenbasis for all $m\Delta_{P}$, and all descent operators
commute (which is also clear from Proposition \ref{prop:descentoperator-composition}).
\item The entries of the matrix of $m\Delta_{P}|_{W}$ depend only on the
degrees of the primitives $p_{i}$. Hence permuting the labels of
the $p_{i}$ with the same degree does not change this matrix. So
the eigenvector $\sum_{\sigma\in\sk}\kappa_{\sigma}p_{\sigma(1)}\dots p_{\sigma(k)}$
can be arranged to be symmetric in the primitives of same degree -
in other words, $\kappa_{\sigma}$ depends only on the tuple $(\deg p_{\sigma(1)},\dots,\deg p_{\sigma(k)})$,
not on $(p_{\sigma(1)},\dots,p_{\sigma(k)})$.
\end{enumerate}
\end{rems*}The final step of the proof, to deduce the existence of
an eigenbasis with the claimed eigenvalues and multiplicities, goes
as follows. Apply the Symmetrisation Lemma to each multiset $\{p_{1}\preceq\dots\preceq p_{k}\}\subseteq\calp$
to get an eigenvector, whose highest length term, by Lemma \ref{lem:straightening},
is $\sum_{\sigma\in\sk}\kappa_{\sigma}p_{1}\dots p_{k}$. Since $\sum_{\sigma\in\sk}\kappa_{\sigma}>0$,
this set of eigenvectors is triangular with respect to the PBW basis
of $\calh$, hence is itself a basis of $\calh$. The number of such
eigenvectors with eigenvalue $\beta_{\lambda}^{P}$ is the number
of multisets $\{p_{1}\preceq\dots\preceq p_{k}\}\in\calp$ with $\lambda=(\deg p_{1},\dots,\deg p_{k})$.
Since $b_{i}$ counts the elements of $\calp$ of degree $i$, the
generating function for such multisets is indeed $\prod_{i}(1-x_{i})^{-b_{i}}$.

\subsubsection*{Second proof of Theorem \ref{thm:evalues}: descent algebras and
noncommutative symmetric functions}

Recall from Proposition \ref{prop:descentoperator-composition} that,
on a cocommutative Hopf algebra $\calh$, the composition of descent
operators is equivalent to the internal product of noncommutative
symmetric functions: $\cppmap(f)\circ\cppmap(g)=\cppmap(f\iprod g)$.
(Recall that $\cppmap$ is defined as the linear map sending the complete
noncommutative symmetric function $S^{D}$ to the descent operator
$m\Delta_{D}$.) Write $S^{P}$ for $\sum_{D}\frac{P(D)}{\binom{n}{D}}S^{D}$
(so $\cppmap(S^{P})=m\Delta_{P}$), and focus on the linear map $L_{S^{P}}:\sym\rightarrow\sym$
given by internal product on the left by $S^{P}$. Since the internal
product of $\sym$ is equivalent to the product in the descent algebra,
 \cite[Prop. 3.10]{cppdiagonalisable} \cite[Th. 1]{hyperplanewalkstationary}
asserts that $L_{S^{P}}$ is diagonalisable, and \cite[Prop. 3.12]{ncsym2}
shows that its eigenvalues are the $\beta_{\lambda}^{P}$ in the theorem
statement. (In what follows, assume $P$ is ``generic'' so that
all $\beta_{\lambda}^{P}$ are distinct. This suffices since the characteristic
polynomial of a matrix is continuous in its entries.)

To see that $\beta_{\lambda}^{P}$ are also eigenvalues of $\cppmap(S^{P})=m\Delta_{P}:\calhn\rightarrow\calhn$,
consider the orthogonal projections to each eigenspace of $L_{S^{P}}$.
These are polynomials in $L_{S^{P}}$, and are therefore of the form
$L_{E_{\lambda}^{P}}$ for some $E_{\lambda}^{P}\in\sym$.  Now the
image of $\cppmap(E_{\lambda}^{P}):\calhn\rightarrow\calhn$ consists
of eigenvectors of $m\Delta_{P}$, since 
\begin{align*}
\cppmap(S^{P})(\cppmap(E_{\lambda}^{P})x) & =\cppmap(S^{P}\iprod E_{\lambda}^{P})(x)\\
 & =\beta_{\lambda}^{P}\cppmap(E_{\lambda}^{P})x.
\end{align*}
 Hence , $\beta_{\lambda}^{P}$ are indeed the eigenvalues of $m\Delta_{P}$.

It remains to determine the multiplicities of the eigenvalues (and
deduce by dimension counting that no other eigenvalues can exist).
An extra piece of notation is useful here: consider the linear map
from $\sym$ to the algebra of symmetric functions \cite[Chap. 7]{stanleyec2},
sending $S^{D}$ to the complete symmetric function $h_{D}$. The
image of $F\in\sym$ under this map is its \emph{commutative image}
$\underline{F}$.
\begin{lem}
\cite[Th. 3.21]{ncsym2}\label{lem:idempotents} Suppose $E_{\lambda},E_{\lambda}'$
are two noncommutative symmetric functions, idempotent under the internal
product, whose commutative images $\underline{E_{\lambda}},\underline{E_{\lambda}'}$
are both the normalised power sum $\frac{p_{\lambda}}{z_{\lambda}}$.
Let $\calh$ be a graded connected cocommutative Hopf algebra. Then
the linear map $\cppmap(S^{n}-E_{\lambda}-E_{\lambda}'):\calhn\rightarrow\calhn$
is invertible, and sends the image of $\cppmap(E_{\lambda})$ to the
image of $\cppmap(E_{\lambda}^{'})$. In particular, these two images
have the same dimension. \qed
\end{lem}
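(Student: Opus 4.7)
The plan is to work first at the level of the descent algebra $(\sym_n,\iprod)$ and then transfer everything to $\End(\calhn)$ via the algebra homomorphism $\cppmap:(\sym_n,\iprod)\to(\End(\calhn),\circ)$ supplied by Proposition~\ref{prop:descentoperator-composition}. Setting $F:=S^n-E_\lambda-E_\lambda'$ and using that $S^n$ is the identity for the internal product (since $\cppmap(S^n)=\id$ on $\calhn$, equivalently $S^n\iprod G=G$ for every $G\in\sym_n$), the idempotency of $E_\lambda$ and $E_\lambda'$ yields the two twin identities
\[
F\iprod E_\lambda \;=\; -E_\lambda'\iprod E_\lambda \;=\; E_\lambda'\iprod F, \qquad E_\lambda\iprod F \;=\; -E_\lambda\iprod E_\lambda' \;=\; F\iprod E_\lambda'.
\]
Applying $\cppmap$ shows that $\cppmap(F)$ intertwines $\cppmap(E_\lambda)$ and $\cppmap(E_\lambda')$ in both directions, so that $\cppmap(F)$ maps $\im\cppmap(E_\lambda)$ into $\im\cppmap(E_\lambda')$, and also $\im\cppmap(E_\lambda')$ into $\im\cppmap(E_\lambda)$.

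For invertibility of $\cppmap(F)$, I would show that $F$ itself is a unit in $(\sym_n,\iprod)$, since $\cppmap$ then carries it to a unit of $\End(\calhn)$. The commutative image map is a surjective homomorphism from $(\sym_n,\iprod)$ onto the algebra of degree-$n$ symmetric functions under internal product, in which $\{p_\mu/z_\mu:\mu\vdash n\}$ is a complete system of orthogonal primitive idempotents summing to the identity $h_n$. Hence $\underline{F}=h_n-2p_\lambda/z_\lambda$ acts as $-1$ on the $\lambda$-coordinate and as $+1$ on every other, and a direct computation using the orthogonality of the $p_\mu/z_\mu$ gives $\underline{F}^{\iprod 2}=h_n$, so $\underline{F}$ is already its own inverse in this semisimple quotient. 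Invoking the classical structural fact that the kernel of $\underline{(\cdot)}$ is the Jacobson radical of the finite-dimensional descent algebra $(\sym_n,\iprod)$, which is therefore nilpotent, $\underline{F}$ being a unit lifts to $F$ being a unit in $(\sym_n,\iprod)$.

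Combining both parts, the globally invertible operator $\cppmap(F)$ restricts to an injection $\im\cppmap(E_\lambda)\hookrightarrow\im\cppmap(E_\lambda')$; the symmetric argument gives the reverse injection, so these two images have equal dimensions and the restriction is in fact a bijection, sending $\im\cppmap(E_\lambda)$ onto $\im\cppmap(E_\lambda')$. The main obstacle I anticipate is the invertibility step: one must invoke (or reprove) the identification of $\ker\underline{(\cdot)}$ with the Jacobson radical of the descent algebra, the structural theorem that makes ``unit modulo commutative image'' equivalent to ``unit.'' Once that is granted, everything else is formal manipulation in $(\sym_n,\iprod)$ using only the hypothesis that $E_\lambda$ and $E_\lambda'$ are idempotents with matching commutative image.
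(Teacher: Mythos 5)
Your proposal is correct, but it reaches the key invertibility step by a genuinely different route than the paper. The paper offers no standalone argument: it cites \cite[Th. 3.21]{ncsym2} and, in the parenthetical remark that follows, explains that the proof there consists of checking that the eigenvalues $\langle\underline{S^{n}-E_{\lambda}-E_{\lambda}'},p_{\mu}\rangle=1-2\delta_{\lambda\mu}=\pm1$ are non-zero; that is, invertibility of $\cppmap(S^{n}-E_{\lambda}-E_{\lambda}')$ is read off from the spectrum of the operator acting on the cocommutative Hopf algebra, in the spirit of the second proof of Theorem \ref{thm:evalues}. You instead prove the stronger statement that $F=S^{n}-E_{\lambda}-E_{\lambda}'$ is already a unit in the degree-$n$ part of $(\sym,\iprod)$: its commutative image $h_{n}-2p_{\lambda}/z_{\lambda}$ squares to the identity $h_{n}$ in the semisimple quotient spanned by the orthogonal idempotents $p_{\mu}/z_{\mu}$, and invertibility lifts because the kernel of the commutative-image map is the nilpotent Jacobson radical of the descent algebra --- Solomon's classical theorem, which you rightly flag as the one external input; since $\cppmap(S^{n})=\id$ on $\calhn$, the unit $F$ is carried to an invertible operator. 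Your twin identities $F\iprod E_{\lambda}=E_{\lambda}'\iprod F$ and $E_{\lambda}\iprod F=F\iprod E_{\lambda}'$ are the same formal manipulation underlying the cited proof, and running them on both sides correctly gives the two injections, hence equality of dimensions and surjectivity of the restriction. The trade-off: your argument settles invertibility upstairs in $\sym$ once and for all, so it applies verbatim to every graded connected cocommutative $\calh$ --- exactly the extension beyond the free associative algebra that the paper's parenthetical has to supply by noting the eigenvalue computation survives --- at the cost of importing Solomon's radical theorem, whereas the eigenvalue route stays inside the spectral machinery the paper has already built for descent operators and records the eigenvalues $\pm1$ explicitly.
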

(The reference treats only the case where $\calh$ is the free associative
algebra, but the proof - that the eigenvalues $\langle\underline{S^{n}-E_{\lambda}-E_{\lambda}'},p_{\mu}\rangle$
are non-zero - holds for any cocommutative $\calh$.)

Set $E_{\lambda}'$ to be the eigenspace projector $E_{\lambda}^{P}$.
Its commutative image $\underline{E_{\lambda}^{P}}$ is indeed $\frac{p_{\lambda}}{z_{\lambda}}$,
because $L_{\frac{p_{\lambda}}{z_{\lambda}}}$ (the left-internal-product
map on symmetric functions) is the orthogonal projection to the $\blp$-eigenspace
for $L_{\underline{S_{P}}}$.Take $E_{\lambda}$ to be the \emph{Garsia-Reutenauer
idempotents} \cite[Sec. 3.3]{ncsym2}, so the image of $\cppmap(E_{\lambda}):\calhn\rightarrow\calhn$
has basis $\{\sum_{\sigma\in\mathfrak{S}_{l(\lambda)}}p{}_{\sigma(1)}\dots p{}_{\sigma(l(\lambda))}|p_{1}\preceq\dots\preceq p_{l(\lambda)}\in\calp,\deg p_{i}=\lambda_{i}\}$,
where $\calp$ is an ordered basis of primitives of $\calh.$ The
cardinality of these sets are precisely as given by the generating
functions in Theorem \ref{thm:evalues}.
\begin{rem*}
Note that, if $x\in\im\cppmap(E_{\lambda})$, then $\cppmap(S^{n}-E_{\lambda}-E_{\lambda}^{P})x=\cppmap(-E_{\lambda}^{P})x$,
since $\cppmap(S^{n})$ and $\cppmap(E_{\lambda})$ both act as the
identity map on $\im\cppmap(E_{\lambda})$. Hence the proof above
supplies the following eigenbasis for $m\Delta_{P}$: 
\[
\left\{ \cppmap(E_{\deg(\mathbf{p})}^{P})\left(\sum_{\sigma\in\sk}p_{\sigma(1)}\dots p_{\sigma(k)}\right)\middle|k\in\mathbb{N},p_{1}\preceq\dots\preceq p_{k}\in\calp,\deg p_{i}=\lambda_{i}\right\} ,
\]
where $L{}_{E_{\deg(\mathbf{p})}^{P}}$ are the orthogonal projections
to the eigenspaces of $L{}_{S^{P}}$. However, this formula may not
lead to easy computation, since expressions for the $E_{\lambda}^{P}$
are usually fairly complicated, see \cite[Eq. 4.5]{cppriffleshuffle}.
\end{rem*}

\subsection{Stationary distribution\label{sub:Stationary-distribution}}

All descent operator Markov chains on the same state space basis share
the same stationary distributions. These have a simple expression
in terms of the product structure constants and the rescaling function
$\eta$ of Lemma \ref{lem:eta}. Informally, $\pi_{c_{1},\dots,c_{n}}(x)$
enumerates the ways to build $x$ out of $c_{1},\dots,c_{n}$ (in
any order) using the multiplication of the combinatorial Hopf algebra,
and to then break it into singletons. (The theorem below restricts
to probability distributions $P$ taking a non-zero value on some
weak-composition with at least two non-zero parts, so the chain driven
by $m\Delta_{P}$ is not trivial - else every distribution is a stationary
distribution.)
\begin{thm}[Stationary distributions of descent operator chains]
\label{thm:stationarydistribution}  Let $\calh=\bigoplus_{n\geq0}\calhn$
be a graded connected Hopf algebra over $\mathbb{R}$ with each $\calhn$
finite-dimensional, and $\calb=\amalg_{n\geq0}\calbn$ a basis of
$\calh$. Fix an integer $n$, and let $P$ be any probability distribution
on the weak-compositions of $n$ such that $\calb$ is a state space
basis for $P$, and $P$ is non-zero on some weak-composition with
at least two non-zero parts. For each multiset $\{c_{1},\dots,c_{n}\}$
in $\calb_{1}$, define the function $\pi_{c_{1},\dots,c_{n}}(x):\calbn\rightarrow\mathbb{R}$
by 
\[
\pi_{c_{1},\dots,c_{n}}(x):=\frac{\eta(x)}{n!^{2}}\sum_{\sigma\in\sn}\xi_{c_{\sigma(1)},\dots,c_{\sigma(n)}}^{x}=\frac{\eta(x)}{n!^{2}}\sum_{\sigma\in\sn}\mbox{coefficient of }x\mbox{ in the product }c_{\sigma(1)}\dots c_{\sigma(n)}.
\]
If $\pi_{c_{1},\dots,c_{n}}(x)\geq0$ for all $x\in\calbn$, then
$\pi_{c_{1},\dots,c_{n}}$ is a stationary distribution for the Markov
chain on $\calbn$ driven by $m\Delta_{P}$, and any stationary distribution
of this chain can be uniquely written as a linear combination of these
$\pi_{c_{1},\dots,c_{n}}$.
\end{thm}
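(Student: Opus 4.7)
The plan is to invoke the Doob $h$-transform bijection of Proposition~\ref{prop:doobtransform-efns}(L) to translate the claim into one about $1$-eigenvectors of $m\Delta_P\colon\calhn\to\calhn$: I will show that each $\pi_{c_1,\ldots,c_n}$ is the left eigenfunction associated to the eigenvector
\[ g_{c_1,\ldots,c_n} := \frac{1}{n!^2}\sum_{\sigma\in\sn} c_{\sigma(1)}c_{\sigma(2)}\cdots c_{\sigma(n)}, \]
and that these eigenvectors form a basis of the $1$-eigenspace of $m\Delta_P$.

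The first step is a direct computation. Since $\calh$ is graded connected, every $c\in\calh_1$ is primitive, so Equation~(\ref{eq:descent-operator-on-product-of-primitives}) from the proof of Lemma~\ref{lem:symlemma} applies:
\[ m\Delta_D(c_{\sigma(1)}\cdots c_{\sigma(n)})=\sum_{B_1|\cdots|B_{l(D)}}\Bigl(\prod_{i\in B_1}c_{\sigma(i)}\Bigr)\cdots\Bigl(\prod_{i\in B_{l(D)}}c_{\sigma(i)}\Bigr), \]
the sum over set compositions of $\{1,\ldots,n\}$ of type $D$. Re-indexing pairs $(\sigma,B)$ by the permutation $\tau\in\sn$ reading the blocks of $B$ in order, each target product $c_{\tau(1)}\cdots c_{\tau(n)}$ is hit exactly $\binom{n}{D}$ times, so $m\Delta_D(g_{c_1,\ldots,c_n})=\binom{n}{D}g_{c_1,\ldots,c_n}$ and thus $m\Delta_P(g_{c_1,\ldots,c_n})=g_{c_1,\ldots,c_n}$. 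Proposition~\ref{prop:doobtransform-efns}(L) then yields $\pi(x)=\eta(x)\cdot[x]\,g_{c_1,\ldots,c_n}$, matching the definition of $\pi_{c_1,\ldots,c_n}$. The normalisation $\sum_x\pi_{c_1,\ldots,c_n}(x)=1$ follows from $\sum_x\eta(x)\xi^x_{c_{\sigma(1)},\ldots,c_{\sigma(n)}}=(\bullet^*)^n(c_{\sigma(1)}\cdots c_{\sigma(n)})=n!$, a direct calculation analogous to the proof of Lemma~\ref{lem:eta} using that each $c_{\sigma(i)}\in\calh_1$.

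For spanning and uniqueness, Proposition~\ref{prop:doobtransform-efns}(L) again reduces everything to showing that the family $\{g_{c_1,\ldots,c_n}\}$, indexed by multisets of cardinality $n$ in $\calb_1$, is a basis of the $1$-eigenspace of $m\Delta_P$. By Theorem~\ref{thm:evalues}, the multiplicity of the eigenvalue $1$ is the coefficient of $x_{(1^n)}=x_1^n$ in $\prod_i(1-x_i)^{-b_i}$ \emph{provided} no $\lambda\neq(1^n)$ yields $\beta_\lambda^P=1$; this collapses to $\binom{n+b_1-1}{n}$, the number of multisets of size $n$ in $\calb_1$ (since $b_1=\dim\calh_1=|\calb_1|$). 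Taking $\calp_1=\calb_1$ in the first proof of Theorem~\ref{thm:evalues} (and using Remark~3 of Section~\ref{sub:evalue} to pick uniform coefficients $\kappa_\sigma$ in the all-degree-one case) identifies the $g_{c_1,\ldots,c_n}$ with the eigenvectors constructed there, which were already shown to be linearly independent (being triangular in the PBW basis of $\gr\calh$).

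The main obstacle is therefore the strict inequality $\beta_\lambda^P<1$ for every $\lambda\neq(1^n)$; this is where the hypothesis that $P$ places positive weight on some weak-composition with at least two non-zero parts is used. Using Interpretation~1 of Section~\ref{sub:evalue}, $\beta_\lambda^D/\binom{n}{D}$ is the proportion of set compositions of $\{1,\ldots,n\}$ of type $D$ fixed by a permutation $\sigma$ of cycle type $\lambda$, so I need to exhibit a single unfixed set composition whenever $D$ has two non-zero parts $d_i,d_j$ and $\lambda\neq(1^n)$. Choosing any element $a$ in a cycle of $\sigma$ of length greater than $1$ together with any $d_i$-subset containing $a$ but omitting $\sigma(a)$ (possible because $d_i\le n-1$) produces such a set composition, so $\beta_\lambda^D<\binom{n}{D}$ strictly, and the weighted average $\beta_\lambda^P$ is strictly below $1$.
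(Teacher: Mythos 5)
Your proposal is correct and follows essentially the same route as the paper: it reduces via Proposition \ref{prop:doobtransform-efns}(L) to showing that the symmetrised products of degree-one basis elements are $1$-eigenvectors (via Equation \ref{eq:descent-operator-on-product-of-primitives}), checks the normalisation $\sum_x\eta(x)\xi^x_{c_{\sigma(1)},\dots,c_{\sigma(n)}}=n!$, proves that $(1,\dots,1)$ is the unique partition with $\beta_\lambda^P=1$ under the two-nonzero-parts hypothesis, and matches the count $\binom{b_1+n-1}{n}$ against the multiplicity from Theorem \ref{thm:evalues} with linear independence via PBW straightening. The only cosmetic difference is that you exhibit an explicitly unfixed set composition where the paper counts the fixed ones as a strictly constrained subset, and you verify the eigenvector property by a direct re-indexing rather than quoting $\beta_{(1,\dots,1)}^D=\binom{n}{D}$; these are the same arguments in complementary form.
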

Many Hopf algebras satisfy $\dim\calh_{1}=1$, in which case the stationary
distribution is unique and given by 
\[
\pi(x):=\frac{\eta(x)}{n!}\xi_{\bullet,\dots,\bullet}^{x}=\frac{\eta(x)}{n!}\mbox{ coefficient of }x\mbox{ in the product }\bullet^{n},
\]
where $\bullet$ denotes the sole element of $\calb_{1}$. This simplifed
formula applies to both extended examples in Sections \ref{sec:cktrees}
and \ref{sec:fqsym}.
\begin{proof}
 By Proposition \ref{prop:doobtransform-efns}.L, the theorem follows
from the following three assertions:
\begin{enumerate}
\item Each function $\pi_{c_{1},\dots,c_{n}}$ has images summing to 1,
so if it takes non-negative values, it is indeed a probability distribution.
\item For any probability distribution $P$ on weak-compositions which is
non-zero on some weak-composition with at least two non-zero parts,
the partition $(1,1,\dots,1)$ is the only $\lambda$ for which $\beta_{\lambda}^{P}=1$.
\item The set of symmetrised products $\left\{ \sum_{\sigma\in\sn}c_{\sigma(1)}\dots c_{\sigma(n)}\right\} $,
over all choices of multisets $\{c_{1},\dots,c_{n}\}\subseteq\calb_{1}$,
gives a basis of the $\beta_{(1,\dots,1)}^{P}$-eigenspace of $m\Delta_{P}$.
In other words, each symmetrised product is a $\beta_{(1,\dots,1)}^{P}$-eigenvector
of $m\Delta_{P}$, and this set is linearly independent and has cardinality
equal to the multiplicity of $\beta_{(1,\dots,1)}^{P}$ specified
in Theorem \ref{thm:evalues}.
\end{enumerate}
For i, to see that $\sum_{x\in\calbn}\pi_{c_{1},\dots,c_{n}}(x)=1$,
appeal to the second displayed equation of the proof of Theorem \ref{thm:cppchain}.
Taking $z_{i}=c_{i}$, it shows that, for each $\sigma\in\sn$, 
\[
\sum_{x\in\calbn}\xi_{c_{\sigma(1)},\dots,c_{\sigma(n)}}^{x}\eta(x)=\binom{n}{\deg c_{\sigma(1)}\dots\deg c_{\sigma(n)}}\eta(c_{1})\dots\eta(c_{n})=n!\cdot1\cdot\dots\cdot1.
\]

Now turn to ii. Recall that $\beta_{\lambda}^{P}:=\sum_{D}\frac{P(D)}{\binom{n}{D}}\beta_{\lambda}^{D}$,
so it suffices to show, for each weak-composition $D$, that $\beta_{(1,\dots,1)}^{D}=\binom{n}{D}$,
and that $\bld\leq\binom{n}{D}$ for all other partitions $\lambda$,
with a strict inequality if $D$ has more than one non-zero part.
The first assertion follows from the definition of $\beta_{(1,\dots,1)}^{D}$
as the number of set compositions of $\{1,2,\dots,n\}$ into $l(D)$
blocks such that block $i$ contains $d_{i}$ elements. Observe that
$\bld$ counts the subsets of these set compositions such that $1,2,\dots,\lambda_{1}$
are in the same block, $\lambda_{1}+1,\lambda_{1}+2,\dots,\lambda_{1}+\lambda_{2}$
are in the same block, and so on. If $\lambda\neq(1,\dots,1)$ and
$D$ has more than one non-zero part, this imposes a non-trivial restriction,
so the count is strictly smaller.

As for iii, recall from the proof of the Symmetrisation Lemma (Lemma
\ref{lem:symlemma}) that, for each weak-composition $D$,
\[
m\Delta_{D}(c{}_{1}\dots c{}_{n})=\sum_{B_{1},\dots,B_{l(D)}}\left(\prod_{i\in B_{1}}c{}_{i}\right)\left(\prod_{i\in B_{2}}c{}_{i}\right)\dots\left(\prod_{i\in B_{l(D)}}c{}_{i}\right),
\]
where the sum runs over all set compositions $B_{1}|\dots|B_{l(D)}$
of $\{1,2,\dots,n\}$ with $d_{i}$ elements in $B_{i}$. So the symmetrised
product $\sum_{\sigma\in\sn}c_{\sigma(1)}\dots c_{\sigma(n)}$ is
a $\beta_{(1,\dots,1)}^{D}$-eigenvector of $m\Delta_{D}$, for all
weak-compositions $D$, and hence is a $\beta_{(1,\dots,1)}^{P}$-eigenvector
of $m\Delta_{P}$. Applying the Poincare-Birkhoff-Witt straightening
algorithm to these symmetrised products give different highest length
terms, so they are linearly independent (see Lemma \ref{lem:straightening}).

It remains to check that the number of such symmetrised products is
equal to the multiplicity of the eigenvalue $\beta_{(1,\dots,1)}^{P}$
as specified by Theorem \ref{thm:evalues}. Clearly the number of
such symmetrised products is $\binom{|\calb_{1}|+n-1}{n}$, the number
of ways to choose $n$ unordered elements, allowing repetition, from
$\calb_{1}$. On the other hand, the eigenvalue multiplicity is $\binom{b_{1}+n-1}{n}$,
since choosing $n$ elements whose degrees sum to $n$ constrains
each element to have degree 1. By equating the coefficient of $x$
in the equality $\prod_{i}\left(1-x^{i}\right)^{-b_{i}}=\sum_{n}\dim\calhn x^{n}$,
it is clear that $b_{1}=\dim\calh_{1}=|\calb_{1}|$. So the number
of such symmetrised products is indeed the multiplicity of the eigenvalue
$\beta_{(1,\dots,1)}^{P}$.
\end{proof}

\subsection{Absorption probabilities and quasisymmetric functions \label{sub:Absorption}}

In this section, assume that $\calh$ is commutative, so no symmetrisation
is necessary in the expression for the stationary distributions in
Theorem \ref{thm:stationarydistribution}:
\[
\pi_{c_{1},\dots,c_{n}}(x)=\frac{\eta(x)}{n!}\mbox{coefficient of }x\mbox{ in the product }c_{1}\dots c_{n}.
\]
Clearly, if $c_{1}\dots c_{n}\in\calb$ (as opposed to being a linear
combination of more than one basis element), then this state is absorbing.
In general, there may be many absorbing states, and also stationary
distributions supported on multiple, non-absorbing states. One sufficient
condition for the absence of the latter is when $\calh$ is \emph{freely
generated} as an algebra (i.e. $\calh=\mathbb{R}[x_{1},x_{2},\dots]$
where $x_{i}$ may have any degree) and $\calb=\left\{ x_{i_{1}}\dots x_{i_{k}}\right\} $
is the set of products in the generators. This is the case for the
chain on organisational tree structures of Example \ref{ex:rooted-trees}
(eventually all employees leave) and also for the rock-chipping model
in the introduction (eventually all rocks have size 1). Under these
conditions, the probability of absorption can be rephrased in terms
of the fundamental Hopf morphism of \cite[Th. 4.1]{abs}. This connection
is a generalisation of \cite[Prop. 3.25]{hopfpowerchains} and \cite[Prop. 5.1.18]{mythesis};
as remarked there, this result does not seem to give an efficient
way to compute these absorption probabilities.

Define a function $\zeta$ on the generators of $\calh$
\[
\zeta(x_{i})=\begin{cases}
1, & \mbox{if }\deg x_{i}=1;\\
0, & \mbox{if }\deg x_{i}>1,
\end{cases}
\]
and extend it linearly and multiplicatively to a character $\zeta:\calh\rightarrow\mathbb{R}$.
So, on $\calb_{n}$, we have $\zeta(x)=1$ if $x$ is an absorbing
state, and 0 otherwise. \cite[Th. 4.1]{abs} asserts that there is
a unique Hopf morphism $\chi$ from $\calh$ to $QSym$, the algebra
of quasisymmetric functions \cite{qsym}, such that $\zeta$ agrees
with the composition of $\chi$ with evaluation at $z_{1}=1,z_{2}=z_{3}=\dots=0$.
(Here, $z_{i}$ are the variables of $QSym$.)
\begin{thm}[Absorption probabilities of certain descent operator chains]
\label{thm:absorption}  Let $\calh$ be a Hopf algebra isomorphic
to $\mathbb{R}[x_{1},x_{2},\dots]$ as an algebra, and $\calb=\left\{ x_{i_{1}}\dots x_{i_{k}}\right\} $.
Let $\{X_{t}\}$ be the descent operator Markov chain on $\calbn$
drivenby $m\Delta_{P}$, started at $x_{0}$. Then the probability
that $\{X_{t}\}$ is absorbed in $t$ steps is 
\[
\frac{n!}{\eta(x_{0})}\langle\underbrace{S^{P}\iprod S^{P}\iprod\dots\iprod S^{P}}_{t\mbox{ factors}},\chi(x_{0})\rangle.
\]

\end{thm}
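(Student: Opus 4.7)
The plan is to compute the absorption probability directly from the Doob-transform formula (Theorem \ref{thm:doob-transform}), convert the iterated composition $(m\Delta_P)^t$ via Proposition \ref{prop:descentoperator-composition}, and then recognise the resulting scalar as the claimed pairing using the universal property of $\chi$. First I identify the absorbing states: a basis element $y=c_1\cdots c_n\in\calbn$ (with the $c_j$ among the generators of $\calh$) satisfies $\zeta(y)=1$ precisely when every $c_j$ has degree $1$. For such a $y$, the $c_j$ are primitive, so the refined coproduct expands as $\Delta_D(y)=\sum_{B_1|\dots|B_{l(D)}}(\prod_{j\in B_1}c_{j})\otimes\dots\otimes(\prod_{j\in B_{l(D)}}c_{j})$, summed over set compositions of $\{1,\dots,n\}$ of type $D$; applying $m$ and using commutativity returns $\binom{n}{D}\,y$, so $m\Delta_P(y)=y$ and $y$ is indeed absorbing. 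The same expansion with $D=(1,\dots,1)$ gives $\Delta_{1,\dots,1}(y)=\sum_{\sigma\in\sn}c_{\sigma(1)}\otimes\dots\otimes c_{\sigma(n)}$, hence $\eta(y)=n!$ for every absorbing $y$.

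Substituting into the Doob-transform expression $\hatk^t(x_0,y)=\frac{\eta(y)}{\eta(x_0)}[y\text{-coefficient of }(m\Delta_P)^t(x_0)]$ and summing over absorbing $y$ yields
\[
\Prob(X_t\text{ absorbed}\mid X_0=x_0)=\frac{n!}{\eta(x_0)}\,\zeta\bigl((m\Delta_P)^t(x_0)\bigr),
\]
since by definition $\zeta$ sums coefficients over absorbing basis elements. Iterating Proposition \ref{prop:descentoperator-composition} with $F=G=S^P$ (so the order of composition is immaterial) gives $(m\Delta_P)^t=\cppmap(\underbrace{S^P\iprod\cdots\iprod S^P}_{t\text{ factors}})$, so it remains to prove the identity
\[
\zeta(\cppmap(F)(x_0))=\langle F,\chi(x_0)\rangle\qquad\text{for all }F\in\sym,
\]
and apply it to $F=\underbrace{S^P\iprod\cdots\iprod S^P}_{t\text{ factors}}$.

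The main obstacle is verifying this last identity, which is where the ABS machinery enters. By linearity it suffices to check $F=S^D$. Since $\zeta$ is a character, $\zeta\circ m=\zeta^{\otimes l(D)}$ on $\calh^{\otimes l(D)}$, so $\zeta(\cppmap(S^D)(x_0))=\zeta(m\Delta_D(x_0))=(\zeta^{\otimes l(D)}\circ\Delta_D)(x_0)$. On the other hand, the ABS construction expresses $\chi$ in the monomial basis of $QSym$ as $\chi(x_0)=\sum_D(\zeta^{\otimes l(D)}\circ\Delta_D)(x_0)\,M_D$, and $\{S^D\}$ is dual to $\{M_D\}$ under the canonical pairing of $\sym$ with $QSym$; hence $\langle S^D,\chi(x_0)\rangle=(\zeta^{\otimes l(D)}\circ\Delta_D)(x_0)$, matching the previous expression. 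Assembling these equalities yields the claimed formula.
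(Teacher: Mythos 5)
Your proof is correct and takes essentially the same route as the paper's: unpack the Doob transform, use Proposition \ref{prop:descentoperator-composition} to turn $t$ iterations into a single application of $\cppmap(S^{P}\iprod\dots\iprod S^{P})$, and identify $\langle S^{D},\chi(x_{0})\rangle$ with $\zeta^{\otimes l(D)}\circ\Delta_{D}(x_{0})=\zeta(m\Delta_{D}(x_{0}))$ via the ABS description of $\chi$ in the monomial basis, together with $\eta\equiv n!$ on absorbing states. The only differences are organizational: you check absorption and $\eta(y)=n!$ directly from primitivity of the degree-one generators, where the paper invokes compatibility of product and coproduct, and you apply the composition rule to $(m\Delta_{P})^{t}$ after summing rather than reducing to $t=1$ first.
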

More explicitly, this absorption probability is: start with the noncommutative
symmetric function $S^{P}:=\sum_{D}\frac{P(D)}{\binom{n}{D}}S^{D}$,
take its $t$-fold internal product with itself, then take its inner
product with $\chi(x_{0})$ (since $QSym\ni\chi(x_{0})$ and $\sym\ni S^{P}$
are dual Hopf algebras), multiply by $n!$ and divide by the rescaling
function evaluated at the initial state.

Note that this theorem only gives the probability of reaching the
set of absorbing states; the above formulation does not calculate
the different probabilities of being absorbed at different states.
A variant which partially addresses this is \cite[Prop. 5.1.19]{mythesis}.
\begin{proof}
The proof is simply a matter of unpacking definitions.

First, reduce to the case of $t=1$ using Proposition \ref{prop:descentoperator-composition}:
since $\calh$ is commutative, $t$ steps of the chain from $S^{P}$
is equivalent to a single step of the chain from $S^{P}\iprod S^{P}\iprod\dots\iprod S^{P}$
(with $t$ factors). So it suffices to show that 
\begin{equation}
\sum_{y}\hatk(x_{0},y)=\frac{n!}{\eta(x_{0})}\langle S^{P},\chi(x_{0})\rangle,\label{eq:absorption}
\end{equation}
where the sum runs over all absorbing states $y$, and $\hatk$ is
the transition matrix of the chain driven by $m\Delta_{P}$. Unravelling
the definition of the Doob transform, 
\begin{align*}
\hatk(x_{0},y) & =\frac{\eta(y)}{\eta(x_{0})}\mbox{coefficient of }y\mbox{ in }m\Delta_{P}(x_{0})\\
 & =\frac{\eta(y)}{\eta(x_{0})}\mbox{coefficient of }y\mbox{ in }\left(\cppmap(S^{P})\right)(x_{0}),
\end{align*}
where $\cppmap$ is the (external product) algebra homomorphism from
noncommutative symmetric functions to descent operators. So Equation
\ref{eq:absorption} is linear in $S^{P}$, and thus it suffices to
work with the complete noncommutative symmetric function $S^{D}$,
i.e. to show
\[
\sum_{y\mbox{ absorbing}}\frac{\eta(y)}{\eta(x_{0})}\mbox{coefficient of }y\mbox{ in }m\Delta_{D}(x_{0})=\frac{n!}{\eta(x_{0})}\langle S^{D},\chi(x_{0})\rangle.
\]

Now the inner product of a quasisymmetric function with $S^{D}$ is
simply its coefficient of $M_{D}$, the monomial quasisymmetric function.
And \cite[Th. 4.1]{abs} defines the $M_{D}$ coefficient of $\chi(x_{0})$
to be $\zeta^{\otimes l(D)}\circ\Delta_{D}(x_{0})$. Since $\zeta$
is multiplicative and linear, this is 
\begin{align*}
\zeta(m\Delta_{D}(x_{0})) & =\sum_{y\in\calbn}\zeta(y)\times\mbox{coefficient of }y\mbox{ in }m\Delta_{D}(x_{0})\\
 & =\sum_{y\mbox{ absorbing}}\mbox{coefficient of }y\mbox{ in }m\Delta_{D}(x_{0}).
\end{align*}
Finally, note that, by the compatibility of product and coproduct,
the rescaling function $\eta$ evaluates to $n!$ on each absorbing
state $c_{1}\dots c_{n}$.
\end{proof}

\section{The Top-or-Bottom-to-Random Chains\label{sec:partsof1}}

This section examines Markov chains driven by various generalisations
of the top-to-random operator $m\Delta_{1,n-1}$. These are of particular
interest because, on many combinatorial Hopf algebras, the refined
coproduct $\Delta_{1,n-1}$ and refined product $m:\calh_{1}\otimes\calh_{n-1}\rightarrow\calh_{n}$
are much easier to understand than the full coproduct and product.
As a result, the chains arising from $m\Delta_{1,n-1}$ (remove and
reattach a piece of size 1) are more natural than those from other
descent operators. Furthermore, these chains have very tractable eigendata
(Theorem \ref{thm:tob2r-evectors-cocommutative}): many of their eigenvalues
collide, resulting in at most $n$ distinct eigenvalues, and there
is an extremely simple formula for many of their eigenvectors.

\subsection{The top-or-bottom-to-random operators\label{sub:tob2r}}

This section defines the operators of interest and details the relationships
between them. In a weak-composition, write $1^{r}$ to denote $r$
consecutive parts of size 1. (Take the convention that $1^{0}$ denotes
a single part of size 0.)
\begin{defn}
\label{defn:tob2r} $ $
\begin{itemize}
\item The \emph{top-to-random} distribution is concentrated at $(1,n-1)$.
\item The \emph{top-$r$-to-random} distribution is concentrated at $(1^{r},n-r)$
(see \cite[Sec. 2]{cppriffleshuffle}).
\item The \emph{binomial-top-to-random} distribution, with parameter $q_{2}$,
assigns probability $\binom{n}{r}(1-q_{2})^{r}q_{2}{}^{n-r}$ to $(1^{r},n-r)$,
for $0\leq r\leq n$ (see \cite[Sec. 3 Exs. 2,3]{cppriffleshuffle}).
\end{itemize}

The related descent operators are:
\begin{align*}
\ter_{n}: & =\frac{1}{n}m\Delta_{1,n-1};\\
\trer_{n}: & =\frac{1}{n(n-1)\dots(n-r+1)}m\Delta_{1^{r},n-r};\\
\binter_{n}(q_{2}): & =\sum_{r=0}^{n}\frac{1}{r!}(1-q_{2})^{r}q_{2}{}^{n-r}m\Delta_{1^{r},n-r}.
\end{align*}

\end{defn}
An alternative definition of $\binter_{n}$, in terms of the formal
series for the exponential function (under the convolution product)
\[
\exp_{*}(x)=1+x+\frac{1}{2!}x*x+\frac{1}{3!}x*x*x+\dots,
\]
is 
\[
\sum_{n}\binter_{n}(q_{2}):=\left(\exp_{*}\left(\frac{1-q_{2}}{q_{2}}\Proj_{1}\right)\right)*\left(\sum_{n}q_{2}^{n}\Proj_{n}\right).
\]
This formulation will not be necessary for what follows. 

We will informally refer to all three operators above as top-to-random
maps. Here is the first way in which they are simpler than the arbitrary
descent operator. Recall that a descent operator induces a Markov
chain on $\calbn$ only if $\calbn$ is a state space basis (Definition
\ref{defn:state-space-basis}), where the relevant product and coproduct
structure constants are non-negative. When using a top-to-random map,
there are fewer structure constants to check. Thanks to associativity
and coassociativity, the following conditions suffice:
\begin{enumerate}
\item for all $c\in\calb_{1},z\in\calb$, their product is $cz=\sum_{y\in\calb}\xi_{c,z}^{y}y$
with $\xi_{c,z}^{y}\geq0$ ;
\item for all $x\in\calb$, its refined coproduct is $\Delta_{1,\deg(x)-1}(x)=\sum_{c,z\in\calb}\eta_{x}^{c,z}c\otimes z$
with $\eta_{x}^{c,z}\geq0$;
\item for each $x\in\calb$, at least one of the $\eta_{x}^{c,z}$ above
is non-zero; in other words, $\Delta_{1,\deg(x)-1}(x)\neq0$. 
\end{enumerate}
Observe that performing a $\trer$ card-shuffle followed by a $\ter$
shuffle results in either a $\opt(r+1)\oper$ shuffle (if the top
card after $\trer$ is one which has not yet been touched) or a $\trer$
shuffle (if this top card was already touched). Thus, as operators
on the shuffle algebra, 
\[
\ter_{n}\circ\trer_{n}=\frac{n-r}{n}\opt(r+1)\oper_{n}+\frac{r}{n}\trer_{n}.
\]
Iterating this shows that the $\trer$ shuffle is a degree $r$ polynomial
(under composition) in the $\ter$ shuffle (see Proposition \ref{prop:tob2r-poly}
below.) And the $\binter$ shuffle is clearly a linear combination
of $\trer$ shuffles. Hence all three types of shuffles can be understood
just by examining the top-to-random shuffle.

In fact, the argument above goes through for all commutative Hopf
algebras, by the composition rule (Proposition \ref{prop:descentoperator-composition}).
It holds more generally for the following maps:
\begin{defn}
\label{defn:qtob2r} $ $
\begin{itemize}
\item The \emph{top-or-bottom-to-random} distribution, with parameter $q$,
assigns probability $q$ to $(1,n-1)$ and probability $1-q$ to $(n-1,1)$.
(\cite[Sec. 6 Ex. 4]{cppriffleshuffle} mentions the symmetric case,
of $q=\frac{1}{2}$.)
\item The \emph{binomial-top-or-bottom-$r$-to-random} distribution, with
parameter $q$, assigns probability $\binom{r}{r_{1}}q^{r_{1}}(1-q)^{r_{3}}$
to $(1^{r_{1}},n-r,1^{r_{3}})$, for $r_{1,}r_{3}\geq0$ with $r_{1}+r_{3}=r$. 
\item The\emph{ trinomial-top-or-bottom-to-random} distribution, with parameters
$q_{1},q_{2},q_{3}$ summing to 1, assigns probability $\binom{n}{r_{1}r_{2}r_{3}}q_{1}^{r_{1}}q_{2}^{r_{2}}q_{3}^{r_{3}}$
to $(1^{r_{1}},r_{2},1^{r_{3}})$, for $r_{1,}r_{2},r_{3}\geq0$ with
$r_{1}+r_{2}+r_{3}=n$. (The corresponding shuffle was termed ``trinomial
top and bottom to random'' in \cite[Sec. 6 Ex. 6]{cppriffleshuffle},
but we change the conjunction from ``and'' to ``or'' to highlight
its relationship to the top-or-bottom-to-random distribution, and
to distinguish it from the top-and-bottom-to-random distributions
of Section \ref{sub:tab2r}.)
\end{itemize}

The related descent operators are:
\begin{align*}
\tober_{n}(q): & =\frac{q}{n}m\Delta_{1,n-1}+\frac{1-q}{n}m\Delta_{n-1,1};\\
\bintobrer_{n}(q): & =\frac{1}{n(n-1)\dots(n-r+1)}\sum_{r_{1}+r_{3}=r}\binom{r}{r_{1}r_{3}}q^{r_{1}}(1-q)^{r_{3}}m\Delta_{1^{r_{1}},n-r,1^{r_{3}}};\\
\trintober_{n}(q_{1},q_{2,}q_{3}): & =\sum_{r_{1}+r_{2}+r_{3}=n}\frac{1}{r_{1}!r_{3}!}q_{1}^{r_{1}}q_{2}^{r_{2}}q_{3}^{r_{3}}m\Delta_{1^{r_{1}},r_{2},1^{r_{3}}}.
\end{align*}

\end{defn}
Observe that 
\begin{align*}
\ter_{n} & =\tober_{n}(1);\\
\trer_{n} & =\bintobrer_{n}(1);\\
\binter_{n}(q_{2}) & =\trintober_{n}(1-q_{2},q_{2},0).
\end{align*}
So it is no surprise that $\trintober_{n}$ also admits an equivalent
definition in terms of $\exp_{*}$ (which again will not be necessary
for this work):
\[
\sum_{n}\trintober_{n}(q_{1},q_{2,}q_{3}):=\left(\exp_{*}\left(\frac{q_{1}}{q_{2}}\Proj_{1}\right)\right)*\left(\sum_{n}q_{2}^{n}\Proj_{n}\right)*\left(\exp_{*}\left(\frac{q_{3}}{q_{2}}\Proj_{1}\right)\right).
\]

This more general triple of operators still enjoy simplified state
space basis axioms, namely the two-sided analogue of the conditions
for top-to-random: $\xi_{c,z}^{y},\xi_{z,c}^{y},\eta_{x}^{c,z},\eta_{x}^{z,c}\geq0$
for all $x,y,z\in\calb$ and all $c\in\calb_{1}$, and $\Delta_{1,\deg(x)-1}(x)\neq0$
for all $x\in\calb$. (An equivalent condition is $\Delta_{\deg(x)-1,1}(x)\neq0$
for all $x\in\calb$.)

The precise relationship between the top-or-bottom-to-random operators,
coming from the composition rule (Proposition \ref{prop:descentoperator-composition}),
is 
\begin{prop}[Relationship between top-or-bottom-to-random operators]
\label{prop:tob2r-poly} On all Hopf algebras,
\[
\trintober_{n}(q_{1},q_{2},q_{3})=\sum_{r=0}^{n}\binom{n}{r}q_{2}{}^{n-r}(1-q_{2})^{r}\bintobrer_{n}\left(\frac{q_{1}}{q_{1}+q_{3}}\right).
\]
And, on all commutative or cocommutative Hopf algebras, $\bintobrer_{n}(q)$
is a polynomial of degree $r$ in $\tober_{n}(q)$, namely the falling
factorial 
\[
\bintobrer_{n}=\left(\frac{n\tober_{n}}{n}\right)\circ\left(\frac{n\tober_{n}-\id}{n-1}\right)\circ\dots\circ\left(\frac{n\tober_{n}-(r-1)\id}{n-(r-1)}\right).
\]
Hence, on all commutative or cocommutative Hopf algebras, $\trintober_{n}(q_{1},q_{2},q_{3})$
is the polynomial
\[
\sum_{r=0}^{n}\binom{x}{r}q_{2}{}^{n-r}(1-q_{2})^{r}
\]
evaluated at $x=\tober_{n}\left(\frac{q_{1}}{q_{1}+q_{3}}\right)$.
\qed
\end{prop}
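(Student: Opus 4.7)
The plan is to prove the three displayed identities in order, with the middle falling-factorial formula for $\bintobrer_n$ as the heart of the argument.

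The first identity is purely formal. Writing $1-q_2 = q_1+q_3$ turns $(1-q_2)^r$ into $(q_1+q_3)^r$, which cancels against the denominators appearing in $\bintobrer_n(q_1/(q_1+q_3))$. Reindexing with $r = r_1+r_3$ and $r_2 = n-r$, and using $\binom{n}{r}(n-r)!/n! = 1/r!$ to absorb the $n(n-1)\cdots(n-r+1)$ normaliser of $\bintobrer_n$, will match the expansion of $\trintober_n$ term by term.

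For the second identity, set $A_n := n\tober_n(q)$ and $B_{n,r} := n(n-1)\cdots(n-r+1)\bintobrer_n(q)$, so that the claim reduces to the factorisation $B_{n,r} = A_n(A_n-\id)(A_n-2\id)\cdots(A_n-(r-1)\id)$. I will prove the recursion
\[
A_n \circ B_{n,r} = B_{n,r+1} + r\,B_{n,r}
\]
by induction on $r$; the product formula follows immediately, since the factors $A_n - j\id$ are all polynomials in the single operator $A_n$ and therefore commute under composition. To establish the recursion I will distribute $A_n \circ B_{n,r}$ into compositions $m\Delta_{a,n-a} \circ m\Delta_{1^{r_1},n-r,1^{r_3}}$ with $a \in \{1,n-1\}$, and compute each via the composition rule of Proposition \ref{prop:descentoperator-composition}. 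Each such composition is a sum over $2\times(r+1)$ nonnegative integer matrices with prescribed row and column sums; because the length-one row has only one nonzero entry, these matrices split into three types according to whether that entry lies in one of the $r_1$ leftmost columns, in the central column of weight $n-r$, or in one of the $r_3$ rightmost columns. The central-column case will contribute new operators of the form $m\Delta_{1^{a},n-r-1,1^{b}}$ with $a+b = r+1$, and re-summing them over $(r_1,r_3)$ with the multinomial weights from $B_{n,r}$, using Pascal's identity $\binom{r}{a-1,b}+\binom{r}{a,b-1} = \binom{r+1}{a,b}$, will produce $B_{n,r+1}$. The side-column cases will contribute multiples of operators $m\Delta_{1^{a},n-r,1^{b}}$ with $a+b = r$ (already present in $B_{n,r}$), and tallying their coefficients via absorption identities such as $(b+1)\binom{r}{a-1,b+1} = a\binom{r}{a,b}$ will yield the residual $r\,B_{n,r}$.

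The third identity then follows by substituting the falling-factorial formula for $\bintobrer_n$ into the first identity and recognising the resulting scalar weights as the binomial polynomial $\binom{x}{r}$ evaluated at $x = \tober_n(q_1/(q_1+q_3))$. The main obstacle will be managing the commutative-versus-cocommutative dichotomy in the composition step: Proposition \ref{prop:descentoperator-composition} assigns different weak-compositions $D''(M)$ to the same matrix $M$ in the two settings (reading across rows versus down columns), so the term-by-term output of $m\Delta_{a,n-a}\circ m\Delta_{1^{r_1},n-r,1^{r_3}}$ is genuinely different. In particular, in the cocommutative side-column case the read-out of $D''(M)$ can be $(1^{r_1+1},n-r,1^{r_3-1})$ rather than $(1^{r_1},n-r,1^{r_3})$, which corresponds to a different index of the $B_{n,r}$ expansion; even so, the weighted sums telescope to the same $r\,B_{n,r}$ in both cases, and verifying this numerical coincidence is where the bookkeeping will be most delicate.
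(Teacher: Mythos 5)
Your proof is correct and is essentially the paper's intended argument: the paper asserts the proposition as a consequence of the composition rule (Proposition \ref{prop:descentoperator-composition}), generalising the recursion $\ter_{n}\circ\trer_{n}=\frac{n-r}{n}\opt(r+1)\oper_{n}+\frac{r}{n}\trer_{n}$ sketched just before Definition \ref{defn:qtob2r}, which is exactly the identity $A_n\circ B_{n,r}=B_{n,r+1}+r\,B_{n,r}$ that you establish via the matrix expansion and then iterate to get the falling factorial. Your central-column (Pascal) and side-column (absorption, $(b+1)\binom{r}{a-1}=a\binom{r}{a}$, $(a+1)\binom{r}{a+1}=b\binom{r}{a}$) bookkeeping does check out under both the column-reading (commutative) and row-reading (cocommutative) conventions, so the delicate step you flag goes through.
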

Note that, if $x$ is an integer between 0 and $n$, then the polynomial
above simplifies to $q_{2}^{n-x}$. (This is false if $x$ is outside
this range.) These powers of $q_{2}$ will turn out to be the eigenvalues
of $\trintober_{n}(q_{1},q_{2},q_{3})$.

\cite[Sec. 6 Ex. 6]{cppriffleshuffle} observed that, if $q_{1}=q_{3}$,
then $\trintober_{n}(q_{1},q_{2},q_{3})$ (on a commutative or cocommutative
Hopf algebra) spans a commutative subalgebra of operators. The polynomial
expression above shows that this is true whenever the ratio between
$q_{1}$ and $q_{3}$ is fixed. Hence the eigendata theorems below
apply to $\trintober_{n}(q_{1},q_{2},q_{3})$ chains where $q_{2}$
varies over time, or chains where different steps are driven by different
top-or-bottom-to-random operators, for instance alternating between
$\tober_{n}(\frac{q_{1}}{q_{1}+q_{3}})$ and $\trintober_{n}(q_{1},q_{2},q_{3})$. 
\begin{rem*}
Interestingly, the expression of $\trer$ as a polynomial in $\ter$
(i.e. the case $q=1$) still holds on a noncommutative and noncocommutative
Hopf algebra, so long as $\dim\calh_{1}=1$. This is a feature of
the theory of dual graded graphs \cite{dgg,dggandcha}, see \cite[Lem. 6.3]{hopfchainliftold}
for a proof in the language of combinatorial Hopf algebras. 
\end{rem*}

\subsection{A construction for eigenvectors\label{sub:tob2r-efns}}

The goal of this section is to prove Theorem \ref{thm:tob2r-evectors-cocommutative},
a particularly simple way to construct many eigenvectors for the top-or-bottom-to-random
maps. On a cocommutative Hopf algebra, all eigenvectors are of this
special form. By Proposition \ref{prop:doobtransform-efns}, this
leads to a full basis of left eigenfunctions for chains on cocommutative
Hopf algebras, and a full basis of right eigenfunctions for chains
on commutative Hopf algebras.

Two special cases of this theorem exist in the literature. For shuffling
a distinct deck of cards, \cite[Sec. 6, Exs. 1 and 4]{cppriffleshuffle}
identified the spectrum for $\trer_{n}$ and $\tober_{n}(\frac{1}{2})$
respectively. Unrelated, the eigenvectors for the top-to-random maps
$(q=1$) follow from applying Schocker's derangement idempotents \cite{derangementidems}
to symmetrised products of primitives, as described in the final remark
of the present Section \ref{sub:evalue}.
\begin{thm}[Eigenvectors for the top-or-bottom-to-random family]
\label{thm:tob2r-evectors-cocommutative}  Let $\calh=\bigoplus_{n\geq0}\calhn$
be a graded connected Hopf algebra over $\mathbb{R}$ with each $\calhn$
finite-dimensional. 
\begin{enumerate}
\item The distinct eigenvalues for the top-or-bottom-to-random operators
are $\beta_{j}$ where:

\begin{itemize}
\item for $\ter_{n}$ and $\tober_{n}(q)$, $\beta_{j}=\frac{j}{n}$ with
$j\in[0,n-2]\cup\{n\}$;
\item for $\trer_{n}$ and $\bintobrer_{n}(q)$, $\beta_{j}=\frac{j(j-1)\dots(j-r+1)}{n(n-1)\dots(n-r+1)}$
with $j\in\{0\}\cup[r,n-2]\cup\{n\}$;
\item for $\binter_{n}(q_{2})$ and $\trintober_{n}(q_{1},q_{2},q_{3})$,
$\beta_{j}=q_{2}^{n-j}$with $j\in[0,n-2]\cup\{n\}$.
\end{itemize}

The multiplicity of the eigenvalue $\beta_{j}$ is the coefficient
of $x^{n-j}y^{j}$ in $\left(\frac{1-x}{1-y}\right)^{\dim\calh_{1}}\sum_{n}\dim\calhn x^{n}$.
(Exception: for $\trer_{n}$ and $\bintobrer_{n}(q)$, the multiplicity
of $\beta_{0}$ is the sum of the coefficients of $x^{n},x^{n-1}y,\dots,x^{n-r+1}y^{r-1}$
in $\left(\frac{1-x}{1-y}\right)^{\dim\calh_{1}}\sum_{n}\dim\calhn x^{n}$.)
In particular, if $\calb_{1}=\{\bullet\}$, then these multiplicities
are $\dim\calh_{n-j}-\dim\calh_{n-j-1}$.

\item Fix $j\in[0,n-2]\cup\{n\}$. For any $p\in\calh_{n-j}$ satisfying
$\Delta_{1,n-j-1}(p)=0$, and any $c_{1},\dots,c_{j}\in\calh_{1}$
(not necessarily distinct), 
\[
\sum_{\sigma\in\sj}c_{\sigma(1)}\dots c_{\sigma(j)}p
\]
 is an eigenvector for:

\begin{itemize}
\item $\ter_{n}$ and $\binter_{n}(q_{2})$, with eigenvalue $\beta_{j}$;
\item $\trer_{n}$, with eigenvalue $\beta_{j}$ if $j\geq r$, and 0 otherwise.
\end{itemize}

For any $p\in\calh_{n-j}$ satisfying $\Delta_{1,n-j-1}(p)=\Delta_{n-j-1,1}(p)=0$,
and any $c_{1},\dots,c_{j}\in\calh_{1}$ (not necessarily distinct),
\[
\sum_{i=0}^{j}\sum_{\sigma\in\sj}\binom{j}{i}q^{i}(1-q)^{j-i}c_{\sigma(1)}\dots c_{\sigma(i)}pc_{\sigma(i+1)}\dots c_{\sigma(j)}
\]
 is an eigenvector for:
\begin{itemize}
\item $\tober_{n}(q)$, with eigenvalue $\beta_{j}$;
\item $\bintobrer_{n}(q)$, with eigenvalue $\beta_{j}$ if $j\geq r$,
and 0 otherwise.
\end{itemize}

and
\[
\sum_{i=0}^{j}\sum_{\sigma\in\sj}\binom{j}{i}q_{1}^{i}q_{3}{}^{j-i}c_{\sigma(1)}\dots c_{\sigma(i)}pc_{\sigma(i+1)}\dots c_{\sigma(j)}
\]
is a $\beta_{j}$-eigenvector for $\trintober_{n}(q_{1},q_{2,}q_{3})$.

\item Let $\calp$ be a (graded) basis of the primitive subspace of $\calh$.
Write $\calp$ as the disjoint union $\calp_{1}\amalg\calp_{>1}$,
where $\calp_{1}$ has degree 1. Set
\begin{align*}
\cale_{j}(1) & :=\left\{ \sum_{\sigma\in\sj}c_{\sigma(1)}\dots c_{\sigma(i)}\sum_{\tau\in\skj}p_{\tau(1)}\dots p_{\tau(k-j)}\right\} ,\\
\cale_{j}(q) & :=\left\{ \sum_{i=0}^{j}\sum_{\sigma\in\sj}\binom{j}{i}q^{i}(1-q)^{j-i}c_{\sigma(1)}\dots c_{\sigma(i)}\left(\sum_{\tau\in\skj}p_{\tau(1)}\dots p_{\tau(k-j)}\right)c_{\sigma(i+1)}\dots c_{\sigma(j)}\right\} ,\\
\cale_{j}(q_{1},q_{2},q_{3}) & :=\left\{ \sum_{i=0}^{j}\sum_{\sigma\in\sj}\binom{j}{i}q_{1}^{i}q_{3}{}^{j-i}c_{\sigma(1)}\dots c_{\sigma(i)}\left(\sum_{\tau\in\skj}p_{\tau(1)}\dots p_{\tau(k-j)}\right)c_{\sigma(i+1)}\dots c_{\sigma(j)}\right\} ,
\end{align*}
where each set ranges over all multisets $\{c_{1},\dots,c_{j}\}$
of $\calp_{1}$, and all multisets $\{p_{1},\dots,p_{k-j}\}$ of $\calp_{>1}$
with $\deg p_{1}+\dots+\deg p_{k-j}=n-j$. Then

\begin{itemize}
\item $\cale_{j}(1)$ (resp. $\cale_{j}(q)$, $\cale_{j}(q_{1},q_{2},q_{3})$)
consists of linearly independent $\beta_{j}$-eigenvectors for $\ter_{n}$
and $\binter_{n}(q_{2})$ (resp. $\tober_{n}(q)$, $\trintober_{n}(q_{1},q_{2},q_{3})$);
\item for $j\geq r$, the set $\cale_{j}(1)$ (resp. $\cale_{j}(q)$) consists
of linearly independent $\beta_{j}$-eigenvectors for $\trer_{n}$
(resp. $\bintobrer_{n}(q)$), and $\amalg_{j=0}^{r-1}\cale_{j}(1)$
(resp. $\amalg_{j=0}^{r-1}\cale_{j}(q)$) consists of linearly independent
$0$-eigenvectors for $\trer_{n}$ (resp. $\bintobrer_{n}(q)$).
\end{itemize}
\item In addition, if $\calh$ is cocommutative, then $\amalg_{j=0}^{n-2}\cale_{j}\amalg\cale_{n}$
is an eigenbasis for the above maps. 
\end{enumerate}
\end{thm}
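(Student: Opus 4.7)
The plan is to prove the four parts of the theorem in order, with part (ii) being the structural heart and the main technical obstacle.

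\textbf{Part (i)} will follow by specializing Theorem \ref{thm:evalues}. For $\ter_n$ with $P=\delta_{(1,n-1)}$, the count $\beta_\lambda^{(1,n-1)}$ equals the number $j$ of parts of size $1$ in $\lambda$, since a set composition with block $\lambda$-sums $1$ and $n-1$ must place a single size-$1$ part in the first block. For the other maps on commutative or cocommutative $\calh$, Proposition \ref{prop:tob2r-poly} expresses each as a polynomial in $\ter_n$ or $\tober_n(q)$, so their eigenvalues are the polynomial images of $j/n$; the identity $\sum_r \binom{j}{r}q_2^{n-r}(1-q_2)^r = q_2^{n-j}$ collapses the $\trintober_n$ spectrum to powers of $q_2$. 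For multiplicities, I would observe that fixing $j$ parts of size $1$ in $\lambda$ gives the generating function $[y^j](1-y)^{-b_1}\cdot[x^{n-j}]\prod_{i\geq 2}(1-x^i)^{-b_i}$, which rewrites as the stated $[y^j x^{n-j}]\bigl(\frac{1-x}{1-y}\bigr)^{b_1}\sum_n\dim\calhn x^n$; the $b_1=1$ simplification to $\dim\calh_{n-j}-\dim\calh_{n-j-1}$ is then immediate.

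The crux is \textbf{part (ii)}. For primitives $c_1,\ldots,c_j\in\calh_1$ and $p\in\calh_{n-j}$ with $\Delta_{1,n-j-1}(p)=0$, I would set $f_i := \sum_{\sigma\in\sj} c_{\sigma(1)}\cdots c_{\sigma(i)}\,p\,c_{\sigma(i+1)}\cdots c_{\sigma(j)}$ and establish the pair of recursions
\[
m\Delta_{1,n-1}(f_i) = i f_i + (j-i)f_{i+1}, \qquad m\Delta_{n-1,1}(f_i) = i f_{i-1} + (j-i)f_i,
\]
the second holding when additionally $\Delta_{n-j-1,1}(p)=0$. These follow by expanding $\Delta$ using $\Delta(c_k)=1\otimes c_k+c_k\otimes 1$ and the vanishing hypotheses on $p$: the unique degree-$1$ left tensorand of $\Delta_{1,n-1}(f_i)$ must be one of the $c_k$, and summing over $\sigma$ re-symmetrizes the remaining $j-1$ letters into another $f$. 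Combined with the binomial identities $\binom{j}{i-1}(j-i+1)=i\binom{j}{i}$ and $\binom{j}{i+1}(i+1)=(j-i)\binom{j}{i}$, this yields $\bigl(q\cdot m\Delta_{1,n-1}+(1-q)m\Delta_{n-1,1}\bigr)F(q) = j F(q)$ for $F(q):=\sum_i\binom{j}{i}q^i(1-q)^{j-i}f_i$, so $\tober_n(q)F(q)=(j/n)F(q)$. The $\ter_n$ formula is the $q=1$ specialization $F(1)=f_j$, and the $\binter_n, \trer_n, \bintobrer_n, \trintober_n$ cases follow by substitution into the polynomial identities of Proposition \ref{prop:tob2r-poly} (with the falling-factorial vanishing for $j<r$ explaining the zero eigenvalues of $\trer_n, \bintobrer_n$).

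For \textbf{part (iii)}, I would specialize $p := \sum_{\tau\in\skj} p_{\tau(1)}\cdots p_{\tau(k-j)}$ with each $p_i\in\calp_{>1}$ of degree $\geq 2$; expanding $\Delta$ of this product shows every tensor factor has left-degree in $\{0\}\cup[2,\infty)$, so $\Delta_{1,*}(p)=\Delta_{*,1}(p)=0$ and part (ii) applies. Linear independence comes from Lemma \ref{lem:straightening}: after fixing a PBW order with $\calp_1\prec\calp_{>1}$, each element of $\cale_j$ has PBW leading term $c_1\cdots c_j\,p_1\cdots p_{k-j}$ (properly sorted) with coefficient $j!(k-j)!(q+(1-q))^j=j!(k-j)!$, so distinct multisets produce distinct leading terms; eigenvectors of different eigenvalues are then independent across $j$. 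For \textbf{part (iv)}, cocommutativity invokes Cartier-Milnor-Moore, giving a PBW basis and $\dim\calhn=[x^n]\prod_i(1-x^i)^{-b_i}$; a direct generating-function collapse shows $\sum_j|\cale_j|=[x^n](1-y)^{-b_1}\prod_{i\geq 2}(1-x^i)^{-b_i}\big|_{y=x}=\dim\calhn$, so the linearly independent family $\amalg_j\cale_j$ exhausts $\calhn$. The principal obstacle is the two-sided recursion in part (ii): one might hope the $\ter_n$-eigenvector $\sum_\sigma c_{\sigma(1)}\cdots c_{\sigma(j)}p$ also diagonalizes $\tober_n(q)$, but neither $m\Delta_{1,n-1}$ nor $m\Delta_{n-1,1}$ alone preserves the span of the $f_i$ in a way yielding a single eigenvector. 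Recognizing that the correct weights $\binom{j}{i}q^i(1-q)^{j-i}$ are the stationary distribution of the random-to-top-or-bottom hyperplane walk on $\{0,\ldots,j\}$ (see the remarks after Lemma \ref{lem:symlemma}) is the decisive algebraic insight; once the form of $F(q)$ is guessed, the binomial telescoping makes the eigenvalue verification routine.
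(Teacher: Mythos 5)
Most of your outline tracks the paper's own proof: your recursions $m\Delta_{1,n-1}(f_i)=if_i+(j-i)f_{i+1}$ and $m\Delta_{n-1,1}(f_i)=if_{i-1}+(j-i)f_i$, and the binomial-weight verification that $F(q)=\sum_i\binom{j}{i}q^i(1-q)^{j-i}f_i$ is a $\frac{j}{n}$-eigenvector of $\tober_n(q)$, are exactly the paper's computation (the paper packages the same recursions as a birth-and-death process and reads off the weights from detailed balance, but states explicitly that the identity ``can be checked by direct calculation''); your parts (iii) and (iv) via PBW triangularity and Cartier--Milnor--Moore are likewise the paper's implications ii$\Rightarrow$iii$\Rightarrow$iv.

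The genuine gap is the last step of your part (ii): you pass from $\tober_n(q)$ to $\trer_n$, $\binter_n(q_2)$, $\bintobrer_n(q)$ and $\trintober_n(q_1,q_2,q_3)$ ``by substitution into the polynomial identities of Proposition \ref{prop:tob2r-poly}.'' But the falling-factorial expression of $\bintobrer_n(q)$ in terms of $\tober_n(q)$ is proved only for commutative or cocommutative $\calh$ (it rests on the composition rule, Proposition \ref{prop:descentoperator-composition}), whereas part (ii) of the theorem is asserted for an arbitrary graded connected $\calh$. The paper flags precisely this trap and closes it with a separate observation that your writeup omits: because $\Delta_{1,n-j-1}(p)=0$ (and $\Delta_{n-j-1,1}(p)=0$ in the two-sided case), coassociativity forces every iterated coproduct of $p$ to contribute no degree-one tensorand in the relevant singleton slots, so the elements $c_{\sigma(1)}\dots c_{\sigma(i)}\,p\,c_{\sigma(i+1)}\dots c_{\sigma(j)}$ satisfy Equation \ref{eq:descent-operator-on-product-of-primitives} for every $D=(1^{r_1},n-r,1^{r_3})$, exactly as if $p$ were primitive. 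Hence the span of the $f_i$ is invariant under all these operators and they act on it by the same matrices as in the free cocommutative model, where your polynomial substitution (or a direct count of ordered selections of singletons, giving $m\Delta_{1^r,n-r}(f_j)=j(j-1)\cdots(j-r+1)f_j$ and the $0$-eigenvalue for $j<r$) is legitimate. Without this extra argument, or a direct computation of $m\Delta_{1^{r_1},n-r,1^{r_3}}(f_i)$ from the kernel hypotheses, your proof establishes part (ii) only for $\ter_n$ and $\tober_n(q)$. The same objection touches your part (i) derivation of the eigenvalues of the $r$-fold and (bi/tri)nomial operators for non-(co)commutative $\calh$; there it is harmless, since one can either compute $\beta_\lambda^P$ directly from Theorem \ref{thm:evalues} for each of these distributions or invoke the coradical-filtration reduction to $\gr(\calh)$ used in the proof of that theorem, but in part (ii), which concerns specific eigenvectors inside $\calh$ itself, the missing invariant-subspace observation is a needed step, not a substitution.
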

The symmetrisation of the $p_{i}$ in iii above is unnecessary: its
only advantage is to put all $p_{i}$ in the chosen multiset on equal
footing. In other words, if the basis of primitives $\calp$ admits
a natural order $\preceq$, then setting $\cale_{j}(1)=\left\{ \sum_{\sigma\in\sj}c_{\sigma(1)}\dots c_{\sigma(i)}p_{1}\dots p_{k-j}\right\} $
over all multisets $\{c_{1},\dots,c_{j}\}$ of $\calp_{1}$, and all
multisets $\{p_{1}\preceq\dots\preceq p_{k-j}\}\subseteq\calp_{>1}$
with $\deg p_{1}+\dots+\deg p_{k-j}=n-j$, would also give linearly
independent eigenvectors (and similarly for $\cale_{j}(q)$ and $\cale_{j}(q_{1},q_{2},q_{3})$).
(Indeed, by Lemma \ref{lem:straightening} on the leading term under
the PBW straightening algorithm, it is possible to use any linear
combination of the products $p_{\tau(1)}\dots p_{\tau(k-j)}$ so long
as the coefficient sum is non-zero.) The symmetrisation of the $c_{i}$,
however, is necessary.
\begin{proof}
We start by proving the series of implications ii$\Rightarrow$iii$\Rightarrow$iv$\Rightarrow$i,
and tackle the proof of ii at the end.
\begin{description}
\item [{ii$\Rightarrow$iii:}] Taking $p=\sum_{\tau\in\skj}p_{\tau(1)}\dots p_{\tau(k-j)}$
shows that each $\cale_{j}(q)$ consists of eigenvectors. Their linear
independence follows from a PBW-straightening and triangularity argument
in the subalgebra $\calu(\calp)$, the universal enveloping algebra
of the primitive subspace (see the last paragraph of the first proof
of Theorem \ref{thm:evalues}). 
\item [{iii$\Rightarrow$iv:}] If $\calh$ is cocommutative, then $\calh=\calu(\calp)$
\cite[Th. 3.8.1]{cmm}, so the $\cale_{j}$ also span. 
\item [{iv$\Rightarrow$i:}] Recall from the proof of Theorem \ref{thm:evalues}
that the multiplicity of each eigenvalue on $\calh$ is equal to its
multiplicity on the cocommutative Hopf algebra $\gr(\calh)^{*}$.
Hence these multiplicities are the product of two numbers: the number
of multisets of $\calp_{1}$ (for $\gr(\calh)^{*}$) with $j$ elements,
and the number of multisets of $\calp_{>1}$ (for $\gr(\calh)^{*}$)
with total degree $n-j$. As in Theorem \ref{thm:evalues}, write
$b_{i}$ for the number of elements of degree $i$ in $\calp$ (for
$\gr(\calh)^{*}$), so $\sum_{n}\dim\calhn x^{n}=\prod_{i}(1-x_{i})^{-b_{i}}$.
Then the required multiplicities are the coefficients of $x^{n-j}y^{j}$
in $(1-y)^{-b_{1}}\prod_{i>1}(1-x_{i})^{-b_{i}}=\left(\frac{1-x}{1-y}\right)^{b_{1}}\sum_{n}\dim\calhn x^{n}$,
and $b_{1}=\dim\calh_{1}$, as noted in the proof of Theorem \ref{thm:stationarydistribution}.
\item [{Proof~of~ii:}] This can be checked by direct calculation, but
that doesn't explain where the formula comes from. So here's a more
circutous proof to demonstrate how one might discover such a formula.
(It is a more complicated version of the $q=1$ argument in Remark
1 after Lemma \ref{lem:symlemma}.) The proof first concentrates on
$\tober_{n}$. The result for $\bintobrer_{n}$ and $\trintober_{n}$
are not then immediate - Proposition \ref{prop:tob2r-poly} doesn't
apply as there is no commutativity or cocommutativity hypothesis.
A more careful argument (the final sentence of the proof) is necessary
to make this extension.

The Symmetrisation Lemma (Lemma \ref{lem:symlemma}) asserts that
there is an eigenvector for each multiset of primitives. Experiment
with a simplest case where this multiset is $\{c_{1},\dots,c_{j},p\}$
with $\deg(p)>1$. From an example such as 
\[
m\Delta_{1,n-1}(c_{1}c_{2}c_{3}pc_{4}c_{5})=c_{1}c_{2}c_{3}pc_{4}c_{5}+c_{2}c_{1}c_{3}pc_{4}c_{5}+c_{3}c_{1}c_{2}pc_{4}c_{5}+c_{4}c_{1}c_{2}c_{3}pc_{5}+c_{5}c_{1}c_{2}c_{3}pc_{4}
\]
(see Equation \ref{eq:descent-operator-on-product-of-primitives}
for the general formula of a descent operator acting on a product
of primitives), it is not hard to see that
\begin{align*}
 & m\Delta_{1,n-1}\left(\sum_{\sigma\in\sj}c_{\sigma(1)}\dots c_{\sigma(i)}pc_{\sigma(i+1)}\dots c_{\sigma(j)}\right)\\
= & i\left(\sum_{\sigma\in\sj}c_{\sigma(1)}\dots c_{\sigma(i)}pc_{\sigma(i+1)}\dots c_{\sigma(j)}\right)+(j-i)\left(\sum_{\sigma\in\sj}c_{\sigma(1)}\dots c_{\sigma(i+1)}pc_{\sigma(i+2)}\dots c_{\sigma(j)}\right),
\end{align*}
and 
\begin{align*}
 & m\Delta_{n-1,1}\left(\sum_{\sigma\in\sj}c_{\sigma(1)}\dots c_{\sigma(i)}pc_{\sigma(i+1)}\dots c_{\sigma(j)}\right)\\
= & i\left(\sum_{\sigma\in\sj}c_{\sigma(1)}\dots c_{\sigma(i-1)}pc_{\sigma(i)}\dots c_{\sigma(j)}\right)+(j-i)\left(\sum_{\sigma\in\sj}c_{\sigma(1)}\dots c_{\sigma(i)}pc_{\sigma(i+1)}\dots c_{\sigma(j)}\right).
\end{align*}
So, treating $\omega_{i}:=\sum_{\sigma\in\sj}c_{\sigma(1)}\dots c_{\sigma(i)}pc_{\sigma(i+1)}\dots c_{\sigma(j)}$
as a state, the linear map $\tober_{n}(q)=\frac{q}{n}m\Delta_{1,n-1}+\frac{1-q}{n}m\Delta_{n-1,1}$,
acting on $\sspan\{\omega_{i}|0\leq i\leq j\}$, induces (a multiple
by $\frac{j}{n}$ of) a \emph{birth-and-death process} \cite[Sec. 2.5]{markovmixing},
with transition ``probabilities''
\begin{align*}
\Prob(\omega_{i}\rightarrow\omega_{i+1}) & =\frac{q}{n}(j-i),\\
\Prob(\omega_{i}\rightarrow\omega_{i}) & =\frac{q}{n}i+\frac{1-q}{n}(j-i),\\
\Prob(\omega_{i}\rightarrow\omega_{i-1}) & =\frac{1-q}{n}i.
\end{align*}
The standard formula \cite[Prop. 2.8]{markovmixing} for the ``stationary
distribution'' of such a chain then simplifies to $\pi(\omega_{i})=\binom{j}{i}q^{i}(1-q)^{j-i}$.
So $\sum_{i}\pi(\omega_{i})\omega_{i}$ is an eigenvector for $\tober_{n}(q)$,
of eigenvalue $\frac{j}{n}$. Even without using the theory of birth-and-death
processes, it is not hard to guess a solution to the detailed balance
equations
\[
\pi(\omega_{i})\frac{q}{n}(j-i)=\pi(\omega_{i+1})\frac{1-q}{n}(i+1),\quad0\leq i<j.
\]
Note that this argument did not require $x$ to be primitive - thanks
to the compatibility between product and coproduct, $c_{\sigma(1)}\dots c_{\sigma(i)}xc_{\sigma(i+1)}\dots c_{\sigma(j)}$
will behave like a product of primitives under $\tober_{n}$, $\bintobrer_{n}$
and $\trintober_{n}$ (i.e. satisfying Equation \ref{eq:descent-operator-on-product-of-primitives})
so long as $\frac{q}{n}\Delta_{1,n-1}(x)=\frac{1-q}{n}\Delta_{n-1,1}(x)=0$.

\end{description}
\end{proof}
Note that, unless $c_{1}=\dots=c_{j}$, the chain on the $\omega_{i}$,
with $j+1$ states, is not the chain from the Symmetrisation Lemma,
which is on all orderings of the multiset $\{\bullet_{1},\dots,\bullet_{j},p\}$,
so in general has $(j+1)!$ states. The latter chain is not reversible
and does not have solutions to the detailed balance equation.
\begin{rem*}
In the case where $\dim\calh_{1}=1$ (so $\calb_{1}=\{\bullet\}$),
part iii of the Theorem above also follows from the eigenspace algorithm
for dual graded graphs \cite{dgg} since, on cocommutative Hopf algebras,
the maps $\U_{n}:\calhn\rightarrow\calh_{n+1}$ and $\D_{n}:\calhn\rightarrow\calh_{n-1}$
defined by 
\[
\U_{n}(x):=q\bullet x+(1-q)x\bullet;
\]
\[
\Delta_{1,n-1}(x)=\bullet\otimes\D_{n}(x)
\]
(so $\Delta_{n-1,1}(x)=\D_{n}(x)\otimes\bullet$ by cocommutativity)
satisfy the relation $\D_{n+1}\U_{n}-\U_{n-1}\D_{n}=\id$. This is
a $q$-deformation of the canonical dual graded graph structure on
a combinatorial Hopf algebra, as detailed in \cite{dggandcha}. Then
$\tober_{n}(q)=\U_{n-1}\D_{n}$. In the case $q=1$ (or $q=0$), cocommutativity
is not necessary - hence, whenever $\dim\calh_{1}=1$, part iii gives
an eigenbasis for $\ter_{n}$, $\trer_{n}$ and $\binter_{n}(q_{2})$.
\end{rem*}

\subsection{Eigenvectors for top-and-bottom-to-random chains\label{sub:tab2r}}

Much of the analysis in the last two sections generalises to a wider
class of chains, where the distribution $P$ is non-zero only on weak-compositions
with at most one part of size larger than 1. (That is, $P$ is non-zero
only on $(1^{r_{1}},r_{2},1^{r_{3}})$ and ``paddings'' of these
compositions by parts of size zero.) These chains model removing pieces
of size 1 from either ``end'' of a combinatorial object then reattaching
them. 

The conditions for $\calb$ to be a state space basis for these distributions
$P$ are the same simplified conditions for the top-or-bottom-to-random
maps. The chains again have at most $n$ eigenvalues, with the same
multiplicities as in Theorem \ref{thm:tob2r-evectors-cocommutative}.i.
The eigenvector algorithm goes through to a lesser extent: $m\Delta_{P}$
would still induce (a multiple of) a Markov chain on $W:=\sspan\{\omega_{i}:=\sum_{\sigma\in\sj}c_{\sigma(1)}\dots c_{\sigma(i)}pc_{\sigma(i+1)}\dots c_{\sigma(j)}|0\leq i\leq j\}$,
whose ``stationary distribution'' $\pi$ gives the eigenvector 
\[
\sum_{i=0}^{j}\sum_{\sigma\in\sj}\pi(\omega_{i})c_{\sigma(1)}\dots c_{\sigma(i)}pc_{\sigma(i+1)}\dots c_{\sigma(j)}.
\]
(As above, $p\in\ker\Delta_{1,n-j-1}\cap\ker\Delta_{n-j-1,1}$, and
$c_{1},\dots,c_{j}\in\calh_{1}$.) In theory, this would again give
a full eigenbasis on cocommutative Hopf algebras. However, the chains
on $W$ are in general not birth-and-death processes, and it appears
to be rather rare for such ``stationary distributions'' to have
as simple an expression as in Theorem \ref{thm:tob2r-evectors-cocommutative}.
We conclude this sectio with one exception.
\begin{defn}
\label{defn:tab2r} $ $
\begin{itemize}
\item The \emph{top-and-bottom-to-random} distribution is concentrated at
$(1,n-2,1)$.
\item The \emph{top-and-bottom-$r$-to-random} distribution is concentrated
at $(1^{r},n-2r,1^{r})$.
\end{itemize}

The related descent operators are:
\begin{align*}
\taber_{n}: & =\frac{1}{n(n-1)}m\Delta_{1,n-2,1};\\
\tabrer_{n}: & =\frac{1}{n(n-1)\dots(n-2r+1)}m\Delta_{1^{r},n-2r,1^{r}}.
\end{align*}

\end{defn}
Unlike the top-or-bottom-to-random maps, $\tabrer_{n}$ are not polynomials
and specialisations of a single descent operator. (Indeed, the eigenvectors
in Theorem \ref{thm:tab2r-evectors-cocommutative} below depend on
$r$.) Polynomials in $\taber_{n}$ involve compositions $(1^{r_{1}},r_{2},1^{r_{3}})$
where $r_{1}\neq r_{3}$, and the distributions associated to these
polynomials don't seem natural.

The detailed balance equations for the ``Markov chain'' that $\tabrer_{n}$
induces on $\sspan\{\omega_{i}:=\sum_{\sigma\in\sj}c_{\sigma(1)}\dots c_{\sigma(i)}pc_{\sigma(i+1)}\dots c_{\sigma(j)}|0\leq i\leq j\}$
read
\[
\pi(\omega_{i})\binom{j-i}{r+i'-i}\binom{i}{r-i'+i}=\pi(\omega_{i'})\binom{j-i'}{r+i-i'}\binom{i'}{r-i+i'},
\]
and admit the solution
\[
\pi(\omega_{i})=\binom{j-r}{i}\binom{j-r}{i-r}.
\]
(This $\pi$ is not normalised to be a distribution - i.e. $\sum_{i=0}^{j}\pi(\omega_{i})\neq1$
- but that is immaterial for the eigenvector construction.) This generates
the eigenvectors in the theorem below. In the case of card-shuffling,
the eigenvalues of $\taber_{n}$ previously appeared in \cite[Sec. 6 Ex. 5]{cppriffleshuffle}.
\begin{thm}[Eigenvectors for the top-and-bottom-to-random family]
\label{thm:tab2r-evectors-cocommutative} Let $\calh=\bigoplus_{n\geq0}\calhn$
be a graded connected Hopf algebra over $\mathbb{R}$ with each $\calhn$
finite-dimensional. 
\begin{enumerate}
\item The distinct eigenvalues for $\tabrer_{n}$ are $\beta_{j}=\frac{j(j-1)\dots(j-2r+1)}{n(n-1)\dots(n-2r+1)}$
with $j\in\{0\}\cup[2r,n-2]\cup\{n\}$. For $j\neq0$, the multiplicity
of $\beta_{j}$ is the coefficient of $x^{n-j}y^{j}$ in $\left(\frac{1-x}{1-y}\right)^{\dim\calh_{1}}\sum_{n}\dim\calhn x^{n}$.
The multiplicity of $\beta_{0}$ is the sum of the coefficients of
$x^{n},x^{n-1}y,\dots,x^{n-r+1}y^{r-1}$ in $\left(\frac{1-x}{1-y}\right)^{\dim\calh_{1}}\sum_{n}\dim\calhn x^{n}$.

\item For any $p\in\calh_{n-j}$ satisfying $\Delta_{1,n-j-1}(p)=\Delta_{n-j-1,1}(p)=0$,
and any $c_{1},\dots,c_{j}\in\calh_{1}$ (not necessarily distinct):

\begin{itemize}
\item if $j<2r$, then
\[
\sum_{\sigma\in\sj}c_{\sigma(1)}\dots c_{\sigma(j)}p
\]
 is a 0-eigenvector for $\tabrer_{n}$.
\item if $j\geq2r$, then
\[
\sum_{i=r}^{j-r}\sum_{\sigma\in\sj}\binom{j-r}{i}\binom{j-r}{i-r}c_{\sigma(1)}\dots c_{\sigma(i)}pc_{\sigma(i+1)}\dots c_{\sigma(j)}
\]
 is a $\beta_{j}$-eigenvector for $\tabrer_{n}$.
\end{itemize}
\item Let $\calp$ be a (graded) basis of the primitive subspace of $\calh$.
Write $\calp$ as the disjoint union $\calp_{1}\amalg\calp_{>1}$,
where $\calp_{1}$ has degree 1. Set
\[
\cale_{j}:=\left\{ \left(\sum_{\sigma\in\sj}c_{\sigma(1)}\dots c_{\sigma(j)}\right)\left(\sum_{\tau\in\skj}p_{\tau(1)}\dots p_{\tau(k-j)}\right)\right\} 
\]
if $j<2r$, and 
\[
\cale_{j}:=\left\{ \sum_{i=r}^{j-r}\sum_{\sigma\in\sj}\binom{j-r}{i}\binom{j-r}{i-r}c_{\sigma(1)}\dots c_{\sigma(i)}\left(\sum_{\tau\in\skj}p_{\tau(1)}\dots p_{\tau(k-j)}\right)c_{\sigma(i+1)}\dots c_{\sigma(j)}\right\} ,
\]
if $j\geq2r$, ranging (in both cases) over all multisets $\{c_{1},\dots,c_{j}\}$
of $\calp_{1}$, and all multisets $\{p_{1},\dots,p_{k-j}\}$ of $\calp_{>1}$
with $\deg p_{1}+\dots+\deg p_{k-j}=n-j$. Then $\amalg_{j=0}^{2r-1}\cale_{j}$
consists of linearly independent 0-eigenvectors for $\tabrer_{n}$,
and, for $j>2r$, $\cale_{j}$ consists of linearly independent $\beta_{j}$-eigenvectors
for $\tabrer_{n}$. 
\item In addition, if $\calh$ is cocommutative, then $\amalg_{j=0}^{n-2}\cale_{j}\amalg\cale_{n}$
is an eigenbasis for $\tabrer_{n}$.\qed 
\end{enumerate}
\end{thm}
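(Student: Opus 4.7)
The plan is to establish the implications ii $\Rightarrow$ iii $\Rightarrow$ iv $\Rightarrow$ i, mirroring the strategy used for Theorem \ref{thm:tob2r-evectors-cocommutative}. The computational heart lies in part ii; the remaining pieces follow from general Hopf-algebraic principles already developed in the paper.

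For part ii, fix $p\in\calh_{n-j}$ with $\Delta_{1,n-j-1}(p)=\Delta_{n-j-1,1}(p)=0$ and $c_1,\ldots,c_j\in\calh_1$, and set $\omega_i:=\sum_{\sigma\in\sj}c_{\sigma(1)}\cdots c_{\sigma(i)}\,p\,c_{\sigma(i+1)}\cdots c_{\sigma(j)}$. I would compute $m\Delta_{1^r,n-2r,1^r}(\omega_i)$ via iterated compatibility of product and coproduct. Since each $c_k$ is a primitive of degree $1$, $\Delta^{[2r+1]}(c_k)$ is a sum of simple tensors with $c_k$ in one slot and $1$'s elsewhere; coassociativity applied to the hypothesis on $p$ gives $\Delta_{1^a,n-j-a-b,1^b}(p)=0$ whenever $a+b>0$, so $p$ must contribute entirely to the middle (the $(r+1)$-st) tensor slot. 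Each nonzero summand thus corresponds to a distribution of the $j$ $c$'s among the $2r+1$ slots with the $2r$ singleton slots each receiving exactly one $c$. When $j<2r$ this is impossible and $m\Delta_{1^r,n-2r,1^r}(\omega_i)=0$, giving the $0$-eigenvector claim.

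When $j\geq 2r$, $m$ preserves relative order within each slot, so the image lies in $W:=\sspan\{\omega_i:0\le i\le j\}$, and counting set compositions as in the first proof of Theorem \ref{thm:evalues} shows each column of the resulting matrix sums to $j(j-1)\cdots(j-2r+1)$, which normalizes to $\beta_j$. Parametrising by $\gamma$ (the number of original-left $c$'s sent to right singletons) and $\delta$ (original-right sent to left singletons), subject to $\delta-\gamma=i'-i$, gives the off-diagonal entries
\[
[m\Delta_{1^r,n-2r,1^r}|_W](\omega_{i'},\omega_i)\ \propto\ \binom{i}{r-i'+i}\binom{j-i}{r+i'-i},
\]
which are exactly the transition weights appearing in the detailed balance equation quoted just before the theorem. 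A direct binomial-identity check confirms that $\pi(\omega_i):=\binom{j-r}{i}\binom{j-r}{i-r}$ solves this equation, so $\sum_i\pi(\omega_i)\omega_i$ is a $\beta_j$-eigenvector of $\tabrer_n$ by the same Perron-type argument as in Lemma \ref{lem:symlemma}.

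For part iii, take $p:=\sum_{\tau\in\skj}p_{\tau(1)}\cdots p_{\tau(k-j)}$ with $p_i\in\calp_{>1}$. Since $\Delta(p_1\cdots p_m)=\Delta(p_1)\cdots\Delta(p_m)$ and each $p_i$ is primitive of degree $>1$, no summand of $\Delta(p_1\cdots p_m)$ has a tensor factor of degree $1$, whence $\Delta_{1,*}(p)=\Delta_{*,1}(p)=0$ and part ii applies. Linear independence of $\cale_j$ follows from the PBW straightening algorithm (Lemma \ref{lem:straightening}), exactly as in the last paragraph of the first proof of Theorem \ref{thm:evalues}. Part iv is then Cartier--Milnor--Moore \cite[Th.\ 3.8.1]{cmm}: cocommutativity gives $\calh=\calu(\calp)$, so the PBW monomials span $\calh$ and the $\amalg_j\cale_j$ are bases by a dimension count matching part i. Finally, part i is obtained by the $\gr$-reduction from the start of Section \ref{sub:evalue}: the eigenvalues and multiplicities of $\tabrer_n$ on $\calh$ coincide with those on $\gr(\calh)^*$, which is cocommutative, and the cardinalities of the $\cale_j$ there are precisely the claimed coefficients of $\bigl(\tfrac{1-x}{1-y}\bigr)^{\dim\calh_1}\sum_n\dim\calhn x^n$, with the multiplicities for $j=0,1,\dots,2r-1$ pooling into that of $\beta_0$.

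The main obstacle is the two-sided bookkeeping in part ii: each original-left $c$ may migrate to either bank of singletons or stay in the middle, and similarly for original-right $c$'s, with only the net flow across $p$ determining $i'$. This combinatorial counting is substantially more intricate than the one-sided birth-and-death setup of Theorem \ref{thm:tob2r-evectors-cocommutative}, and verifying the binomial identity underlying detailed balance for $\pi(\omega_i)=\binom{j-r}{i}\binom{j-r}{i-r}$ is the one step that does not reduce to machinery already in the paper.
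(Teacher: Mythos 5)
Your proposal is correct and follows essentially the same route as the paper: you restrict $\tabrer_{n}$ to $W=\sspan\{\omega_{i}\}$, observe the column sums give $\beta_{j}$, read off the transition weights $\binom{j-i}{r+i'-i}\binom{i}{r-i'+i}$, and solve detailed balance with $\pi(\omega_{i})=\binom{j-r}{i}\binom{j-r}{i-r}$ (the identity does check, being symmetric in $i\leftrightarrow i'$ after writing both sides in factorials), then run the ii$\Rightarrow$iii$\Rightarrow$iv$\Rightarrow$i chain exactly as in the proof of Theorem \ref{thm:tob2r-evectors-cocommutative}. One remark: your pooling of the multiplicities for $j=0,1,\dots,2r-1$ into that of $\beta_{0}$ is the count consistent with parts ii--iv (where $\amalg_{j=0}^{2r-1}\cale_{j}$ are the $0$-eigenvectors); the range $j\leq r-1$ printed in part i of the statement appears to be a slip carried over from the $\trer_{n}$ case of Theorem \ref{thm:tob2r-evectors-cocommutative}.
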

As in the case of the top-or-bottom-to-random shuffles (Theorem \ref{thm:tob2r-evectors-cocommutative}),
the symmetrisation of the high degree primitives $p_{i}$ in part
iii is unnecessary. The symmetrisation of the $c_{i}$ in the $j<2r$
case, in both parts ii and iii, are also unnecessary - indeed, any
linear combination of products in $c_{i}$ and $p_{i}$ is a 0-eigenvector
when $j<2r$.

\subsection{Recursive lumping property}

Return to the top-to-random chains of Definition \ref{sub:tob2r}.
Assume that the underlying Hopf algebra $\calh$ satisfies $\calh_{1}=\sspan\{\bullet\}$,
together with a new condition: for each $x\in\calbn$, we have $\Delta_{1,n-1}(x)=\bullet\otimes x'$
for $x'\in\calb_{n-1}$. (Note this forces $\eta\equiv1$.) Then,
there is a well-defined map $D:\calbn\rightarrow\calb_{n-1}$ satisfying
$\Delta_{1,n-1}(x)=\bullet\otimes D(x)$. (This $D$ is the down operator
of the dual graded graph associated to $\calh$ in \cite{dggandcha}'s
construction.) Iterates $D^{n-k}$ of $D$ satisfy $\Delta_{1^{n-k},k}(x)=\bullet\otimes\dots\otimes\bullet\otimes D^{n-k}(x)$.
View $D^{n-k}$ as a ``forgetful function'' on $\calbn$, observing
only a size $k$ part of these size $n$ objects.

Theorem \ref{thm:recursivelumping} below proves that the image under
$D^{n-k}$ of the top-to-random chain on $\calbn$ is a lazy version
of the analogous chain on $\calb_{k}$. See Theorem \ref{thm:recursivelumping-fqsym}
for an intuitive card-shuffling example. Informally, observing a subobject
of the chain gives a smaller copy of the same chain (with laziness),
hence ``recursive''. In order to state this result precisely, the
following definitions are necessary: 
\begin{defn}
\label{def:lazychain} Given a Markov chain $\{X_{t}\}$ with transition
matrix $K$ and a number $\alpha\in[0,1]$, the \emph{$\alpha$-lazy
version of $\{X_{t}\}$} is the Markov chain with transition matrix
$\alpha\id+(1-\alpha)K$.
\end{defn}
Equivalently, at each time step, the $\alpha$-lazy version of $\{X_{t}\}$
stays at the same state with probability $\alpha$, and with probability
$1-\alpha$ it evolves according to $\{X_{t}\}$.

From the ``basis scaling'' interpretation of the Doob transform
($\hatk:=\left[\T\right]_{\hatcalb}^{T}$), it is immediate that the
Doob transform ``commutes with lazying'' in the following sense:
\begin{lem}
\label{lem:dooblazy} Let $\{X_{t}\}$ be the Markov chain on $\calb$
driven by $\T$ with rescaling function $\eta$. The $\alpha$-lazy
version of $\{X_{t}\}$ is driven by $\alpha\id+(1-\alpha)\T$, with
the same rescaling function $\eta$.\qed
\end{lem}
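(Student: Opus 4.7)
The plan is to invoke the ``basis scaling'' characterisation of the Doob transform from Theorem \ref{thm:doob-transform}, namely $\hatk = [\T]_{\hatcalb}^{T}$ with $\hatcalb = \{x/\eta(x) \mid x \in \calb\}$. Once this viewpoint is in place, the lemma amounts to the observation that forming the matrix in the rescaled basis is linear and sends $\id$ to the identity matrix, so that lazying commutes with Doob-transforming.

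First I would check that the hypotheses of Theorem \ref{thm:doob-transform} are satisfied for the operator $\alpha\id + (1-\alpha)\T$. Non-negativity of the matrix is immediate: $[\alpha\id + (1-\alpha)\T]_{\calb}^{T} = \alpha I + (1-\alpha)[\T]_{\calb}^{T}$, and both summands have non-negative entries since $\alpha \in [0,1]$ and $[\T]_{\calb}^{T}$ is assumed non-negative. That $\eta$ remains a $1$-eigenvector of the dual follows by linearity of dualisation: $(\alpha\id + (1-\alpha)\T)^{*}\eta = \alpha\eta + (1-\alpha)\T^{*}\eta = \alpha\eta + (1-\alpha)\eta = \eta$. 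So $\eta$ is a valid rescaling function for $\alpha\id + (1-\alpha)\T$, and the chain driven by the latter is well-defined.

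Then the proof is a single computation. Since the assignment $\T' \mapsto [\T']_{\hatcalb}^{T}$ is linear in $\T'$, and $\id$ fixes each rescaled basis vector $x/\eta(x)$ so that $[\id]_{\hatcalb}^{T} = I$, one gets
\[
\left[\alpha\id + (1-\alpha)\T\right]_{\hatcalb}^{T} = \alpha I + (1-\alpha)[\T]_{\hatcalb}^{T} = \alpha I + (1-\alpha)\hatk,
\]
which by Definition \ref{def:lazychain} is exactly the transition matrix of the $\alpha$-lazy version of $\{X_{t}\}$.

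There is no serious obstacle here; the only conceptual point is recognising that in the rescaled basis $\hatcalb$ the Doob transform is just matrix transposition, so any affine combination of operators passes through unchanged. The only subtlety worth spelling out in the final write-up is the verification that $\eta$ really does still serve as the rescaling function, so that the statement of the lemma makes sense without introducing a second eigenvector.
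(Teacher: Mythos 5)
Your proof is correct and is exactly the argument the paper has in mind: the paper states the lemma as immediate from the basis-scaling interpretation $\hatk=\left[\T\right]_{\hatcalb}^{T}$, which is precisely the linearity computation you carry out. Your extra verification that $\eta$ remains a $1$-eigenvector of $(\alpha\id+(1-\alpha)\T)^{*}$ is a sensible bit of bookkeeping that the paper leaves implicit.
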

Since $\id:\calhn\rightarrow\calhn$ may be expressed as the descent
operator $m\Delta_{(n)}$, a lazy version of a descent operator chain
is also a descent operator chain.
\begin{defn}
\label{def:lumping} Let $\{X_{t}\},\{\bar{X}_{t}\}$ be Markov chains
on state spaces $\Omega,\bar{\Omega}$ respectively, and let $\theta:\Omega\rightarrow\bar{\Omega}$
be a surjection. Then $\{X_{t}\}$ is said to \emph{lump via $\theta$
to }$\{\bar{X}_{t}\}$ if the process $\{\theta(X_{t})\}$ is a Markov
chain with the same transition matrix as $\{\bar{X}_{t}\}$. \end{defn}
\begin{thm}[Recursive lumping of top-to-random chains]
 \label{thm:recursivelumping} Let $\calh$ be a graded connected
Hopf algebra with basis $\calb$ such that $\calb_{1}=\{\bullet\}$,
and $\Delta_{1,n-1}(x)=\bullet\otimes D(x)$ for some $D:\calbn\rightarrow\calb_{n-1}$.
Then: 
\begin{enumerate}
\item the Markov chain on $\calbn$ driven by $\ter_{n}$ lumps via $D^{n-k}$
to the $\frac{n-k}{n}$-lazy version of the chain on $\calb_{k}$
driven by $\ter_{k}$;
\item the Markov chain on $\calbn$ driven by $\binter_{n}(q_{2})$ lumps
via $D^{n-k}$ to the chain on $\calb_{k}$ driven by $\binter_{k}(q_{2})$.
\end{enumerate}
\end{thm}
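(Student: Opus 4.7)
The plan is to reduce both assertions to operator-level intertwining identities between $D^{n-k}$ and the relevant descent operator, then translate these into the stated lumping statements. Under the hypotheses, $\eta(x) = 1$ for every $x \in \calbn$ (since $D^{n-1}(x) \in \calb_1 = \{\bullet\}$), so $\hatk(x, y)$ is simply the coefficient of $y$ in $\ter_n(x)$, resp.\ in $\binter_n(q_2)(x)$. Moreover, since $D^{n-k}$ sends each basis element to a single basis element, the lumping sum $\sum_{y \,:\, D^{n-k}(y) = \bar y} \hatk(x, y)$ equals the coefficient of $\bar y$ in $D^{n-k}$ applied to the operator output. Thus both parts reduce to proving the operator identities
\begin{align*}
D^{n-k} \circ \ter_n & = \tfrac{n-k}{n}\, D^{n-k} + \tfrac{k}{n}\, \ter_k \circ D^{n-k}, \\
D^{n-k} \circ \binter_n(q_2) & = \binter_k(q_2) \circ D^{n-k}.
\end{align*}

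The engine will be the identity, for $y \in \calh_{n-1}$,
\[ D(\bullet \cdot y) \;=\; y + \bullet \cdot D(y), \]
which I would derive from Hopf compatibility $\Delta(\bullet y) = \Delta(\bullet) \Delta(y)$ together with $\Delta(\bullet) = \bullet \otimes 1 + 1 \otimes \bullet$, then projecting to $\calh_1 \otimes \calh_{n-1}$: the term $\bullet \otimes y$ arises from $(\bullet \otimes 1) \cdot (1 \otimes y)$, and $\bullet \otimes \bullet \cdot D(y)$ arises from $(1 \otimes \bullet) \cdot \Delta_{1, n-2}(y)$, using crucially that $\calh_1 = \sspan\{\bullet\}$ so that only these two contributions survive. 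A short induction on $r$ via Pascal's rule then yields
\[ D^j(\bullet^r \cdot y) \;=\; \sum_{s = 0}^{\min(j, r)} \frac{j!}{(j-s)!} \binom{r}{s} \bullet^{r - s} \cdot D^{j-s}(y). \]

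For (i), I would specialise this formula to $r = 1$, $j = n-k$, $y = D(x)$, then identify the two resulting summands using $\ter_n(x) = \frac{1}{n} \bullet \cdot D(x)$ and $\ter_k(D^{n-k}(x)) = \frac{1}{k} \bullet \cdot D^{n-k+1}(x)$. For (ii), writing $\binter_n(q_2)(x) = \sum_r \frac{(1-q_2)^r q_2^{n-r}}{r!} \bullet^r \cdot D^r(x)$ and applying the general formula to each summand, I would reindex by $t := r - s$; the inner sum over $s$ should collapse via the binomial identity $\sum_s \binom{n-k}{s}(1-q_2)^s q_2^{n-k-s} = 1$, leaving precisely $\binter_k(q_2)(D^{n-k}(x))$.

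The main obstacle is the base identity $D(\bullet y) = y + \bullet \cdot D(y)$: the hypothesis $\dim \calh_1 = 1$ is essential so that both summands of the projected coproduct simplify cleanly, and one has to be careful to use the given assumption $\Delta_{1, n-2}(y) = \bullet \otimes D(y)$ only on basis elements and then extend linearly. It is pleasing that no commutativity or cocommutativity of $\calh$ enters anywhere (so the composition rule for descent operators is not invoked), which is what makes the recursive lumping property applicable to such a broad class of Hopf algebras.
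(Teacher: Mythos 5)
Your proposal is correct, and for part (i) it is essentially the paper's argument: your base identity $D(\bullet y)=y+\bullet D(y)$ is exactly the relation $DU=UD+\id$ (with $U(y)=\bullet y$) that the paper takes from the canonical dual graded graph structure of \cite{dggandcha}, except that you derive it directly from compatibility of product and coproduct together with $\dim\calh_{1}=1$, which makes the step self-contained. The reduction of the lumping statement to the two intertwining identities is also the same move as in the paper (which cites \cite[Th. 2.7]{hopfchainlift}); your direct computation of the fiber sums using $\eta\equiv1$ and the fact that $D^{n-k}$ maps basis elements to basis elements is a legitimate substitute, since it verifies Dynkin's criterion and identifies the lumped transition matrix. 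Where you genuinely diverge is part (ii): the paper writes $\binter_{n}(q_{2})$ as a polynomial in $\ter_{n}$ via Proposition \ref{prop:tob2r-poly} (which, in this noncommutative and noncocommutative setting, needs the $\dim\calh_{1}=1$ remark at the end of Section \ref{sub:tob2r}) and then pushes the falling-factorial expression through $D^{n-k}$ with a polynomial-evaluation and Vandermonde-type manipulation, whereas you expand $\binter_{n}(q_{2})(x)=\sum_{r}\frac{(1-q_{2})^{r}q_{2}^{n-r}}{r!}\bullet^{r}D^{r}(x)$ and commute $D^{n-k}$ past $U^{r}$ using the closed-form Weyl-type formula $D^{j}U^{r}=\sum_{s}\frac{j!}{(j-s)!}\binom{r}{s}U^{r-s}D^{j-s}$, after which a single binomial sum collapses to $1$. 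This is cleaner and avoids any appeal to Proposition \ref{prop:tob2r-poly}. Two small points to make explicit when writing it up: $D$ should be extended linearly to all of $\calh_{m}$ (possible precisely because $\calh_{1}$ is one-dimensional, so $\Delta_{1,m-1}(z)=\bullet\otimes D(z)$ defines $D(z)$ for arbitrary $z$), with the convention $D\equiv0$ on $\calh_{0}$, so that the commutation relation and hence your iterated formula hold on all of $\calh$ and the terms with $D$ applied more than $n$ times (those with $r-s>k$ in part (ii)) vanish rather than needing to be excluded by hand; and in the binomial collapse you should note that for $t=r-s\leq k$ the index $s$ really does range over all of $0,\dots,n-k$, which is what makes the inner sum equal to $1$ exactly.
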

\begin{proof}
For i: by Lemma \ref{lem:dooblazy}, we wish to prove that the chain
driven by $\ter_{n}=\frac{1}{n}m\Delta_{1,n-1}:\calbn\rightarrow\calbn$
lumps via $D^{n-k}$ to the chain driven by $\frac{n-k}{n}\id+\frac{k}{n}\ter_{k}=\frac{n-k}{n}\id+\frac{1}{n}m\Delta_{1,k-1}:\calb_{k}\rightarrow\calb_{k}$.
By \cite[Th. 2.7]{hopfchainlift}, it suffices to show that 
\begin{equation}
D^{n-k}\circ\left(\frac{1}{n}m\Delta_{1,n-1}\right)=\left(\frac{n-k}{n}\id+\frac{1}{n}m\Delta_{1,k-1}\right)\circ D^{n-k};\label{eq:recursivelumping}
\end{equation}
the other hypotheses are clearly satisfied. As noted at the beginning
of this section, $D$ is the down operator of a dual graded graph
following the general construction in \cite{dggandcha}. The corresponding
up operator $U$ satisfies $m_{1,n-1}(\bullet\otimes x)=U(x)$. Hence
Equation \ref{eq:recursivelumping} may be rephrased in terms of dual
graded graphs as follows:
\[
\frac{1}{n}D^{n-k}\circ U\circ D=\frac{1}{n}\left((n-k)\id+U\circ D\right)\circ D^{n-k}.
\]
And this is proved by repeated application of the dual graded graph
condition $DU=UD+\id$:
\begin{align*}
D^{n-k}\circ U\circ D & =D^{n-k-1}\circ(UD+\id)\circ D\\
 & =D^{n-k-2}\circ(DU+\id)\circ D^{2}=D^{n-k-2}\circ((UD+\id)+\id)\circ D^{2}\\
 & \hphantom{=D^{n-k-2}\circ(DU+\id)\circ D^{2}}=D^{n-k-3}\circ(DU+2\id)\circ D^{3}=\dots\\
 & \hphantom{=D^{n-k-2}\circ(DU+\id)\circ D^{2}=D^{n-k-3}\circ(DU+2\id)\circ D^{3}}=\left((n-k)\id+U\circ D\right)\circ D^{n-k}.
\end{align*}

For ii: by \cite[Th. 2.7]{hopfchainlift}, we wish to show 
\[
D^{n-k}\circ\binter_{n}=\binter_{k}\circ D^{n-k}
\]
which, by Proposition \ref{prop:tob2r-poly}, is equivalent to
\begin{align}
 & D^{n-k}\circ\left(\sum_{r=0}^{n}\binom{n}{r}q_{2}{}^{n-r}(1-q_{2})^{r}\left(\frac{n\ter_{n}}{n}\right)\circ\left(\frac{n\ter_{n}-\id}{n-1}\right)\circ\dots\circ\left(\frac{n\ter_{n}-(r-1)\id}{n-(r-1)}\right)\right)\nonumber \\
= & \left(\sum_{r=0}^{k}\binom{k}{r}q_{2}{}^{k-r}(1-q_{2})^{r}\left(\frac{k\ter_{k}}{k}\right)\circ\left(\frac{k\ter_{k}-\id}{k-1}\right)\circ\dots\circ\left(\frac{k\ter_{k}-(r-1)\id}{k-(r-1)}\right)\right)\circ D^{n-k}.\label{eq:binterlumping}
\end{align}
From Equation \ref{eq:recursivelumping}: 
\[
D^{n-k}\circ\left(n\ter_{n}-i\id\right)=\left((n-k-i)\id+k\ter_{k}\right)\circ D^{n-k},
\]
and so the left hand side of Equation \ref{eq:binterlumping} is 
\begin{align*}
 & \left(\sum_{r=0}^{n}\frac{1}{r!}q_{2}{}^{n-r}(1-q_{2})^{r}\left((n-k)\id+k\ter_{k}\right)\circ\left((n-k-1)\id+k\ter_{k}\right)\circ\dots\circ\left((n-k-(r-1))\id+k\ter_{k}\right)\right)\circ D^{n-k}\\
= & \left(\sum_{r=0}^{n}\binom{n-k+x}{r}q_{2}{}^{n-r}(1-q_{2})^{r}\right)\circ D^{n-k}\mbox{ evaluated at }x=k\ter_{k}.
\end{align*}
For every integer $x>k$, we have $\binom{n-k+x}{r}=\sum_{r'=0}^{r}\binom{n-k}{r-r'}\binom{x}{r'}$,
so this must be a polynomial identity in $x$. Hence the above is
\begin{align*}
= & \left(\sum_{r=0}^{n}\sum_{r'=0}^{r}\binom{n-k}{r-r'}\binom{x}{r'}q_{2}{}^{n-r}(1-q_{2})^{r}\right)\circ D^{n-k}\mbox{ evaluated at }x=k\ter_{k}\\
= & \left(\sum_{r'=0}^{n}\binom{x}{r'}q_{2}{}^{k-r'}(1-q_{2})^{r'}\sum_{r=r'}^{n}\binom{n-k}{r-r'}q_{2}{}^{n-k-(r-r')}(1-q_{2})^{r-r'}\right)\circ D^{n-k}\mbox{ evaluated at }x=k\ter_{k}\\
= & \left(\sum_{r'=0}^{k}\binom{x}{r'}q_{2}{}^{k-r'}(1-q_{2})^{r'}\sum_{r-r'=0}^{r-r'=n-r'}\binom{n-k}{r-r'}q_{2}{}^{n-k-(r-r')}(1-q_{2})^{r-r'}\right)\circ D^{n-k}\mbox{ evaluated at }x=k\ter_{k},
\end{align*}
where we have neglected the terms with $r'>k$ because then evaluating
$\binom{x}{r'}$ at $x=k\ter_{k}$ gives a factor $k\ter_{k}-i\id$
for all the eigenvalues $i=0,1,\dots k$ of $k\ter_{k}$, hence the
evaluation is 0. And when $r'\leq k$, the second sum runs up to $r-r'=n-r'\geq n-k$,
but $\binom{n-k}{r-r'}=0$ for $r-r'>n-k$, so we can truncate the
sum at $n-k$. Then this second sum yields 1, and the remainder of
the expression is exactly the right hand side of Equation \ref{eq:binterlumping}.

\end{proof}

\section{A Chain on Organisational Structures\label{sec:cktrees}}

The goal of this section is to employ the present Hopf-algebraic framework
to analyse a ``leaf-removal'' or ``employee firing'' chain, as
outlined in Example \ref{ex:rooted-trees}. We note that, as a result
of \cite[Th. 1]{treehookwalk} (repeated as Theorem \ref{thm:treehookwalk}
below), our leaf-removal step also occurs in the chain of \cite{treechainjason},
where it is followed by a leaf-attachment step that is absent here.

The situation is as follows: A company has a tree structure, so each
employee except the boss has exactly one direct superior. Each month,
some employees are fired (details in three paragraphs below), and
each firing independently causes a cascade of promotions: first, someone
further down the chain of superiority from the fired employee is uniformly
selected to replace him. Then, if the promoted employee was superior
to anyone, then one of those is uniformly selected and promoted to
his position. This process continues until someone who is not superior
to anyone (a leaf) is promoted. Figure \ref{fig:cktrees-firing} shows
the probabilities of all the possible scenarios after C is fired.

\begin{figure}
\setlength{\unitlength}{1mm} \begin{center} \begin{picture} (150,80) (15,135)   \put(62.5,190){\firing}  \put(65,195){\vector(-4, -5){20}} \put(52,184){$\frac{1}{4}$} \put(30,150){\firingone}  \put(70,195){\vector(-2, -5){10}} \put(62,184){$\frac{1}{4}$} \put(52,150){\firingtwo}  \put(75,195){\vector(2, -5){10}} \put(81,184){$\frac{1}{4}$} \put(78,150){\firingthree} \put(85,155){\vector(0, -1){12}} \put(88,150){$1$} \put(78,125){\firingthreeb}  \put(80,195){\vector(4, -5){20}} \put(91,184){$\frac{1}{4}$} \put(96,150){\firingfour}  \end{picture} \end{center}\protect\caption{\label{fig:cktrees-firing}All possible promotion scenarios after
an employee is fired, and their respective probabilities.}
\end{figure}
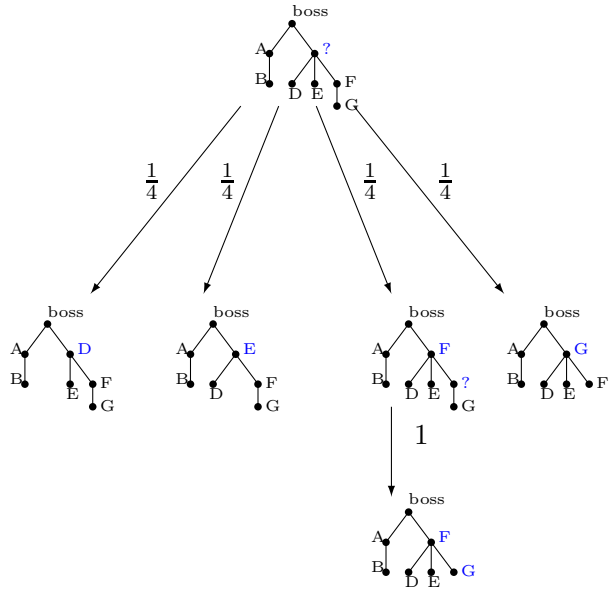

The chain keeps track of the tree structure of the company, but does
not know which employee is taking which position. For example the
rightmost two possibilities in Figure \ref{fig:cktrees-firing} represent
the same state. More specifically, if $T_{0}$ denotes the starting
state, then the state space of the chain is the set of \emph{rooted
subtrees} of $T_{0}$ - that is, the connected subsets of vertices
of $T_{0}$ which include the root (the boss). Let $T_{t}$ denote
the structure of the company after $t$ months, and $n_{t}$ the number
of employees after $t$ months (the number of vertices in $T_{t}$).

We consider two ways in which employees are fired: 
\begin{itemize}
\item The \emph{single model}: with probability $\frac{n_{t}}{n_{0}}$,
one uniformly chosen employee is fired at the start of month $t+1$.
(It does not matter whether the fired employee is chosen from all
$n_{t}$ employees or from only the $n_{t}-1$ non-boss employees;
the promotion cascades ensure that both options result in the same
chain.) With the complementary probability $\frac{n_{0}-n_{t}}{n_{0}}$,
there is no change to the company structure. 
\item The \emph{binomial model:} the monthly performance of each employee
(including the boss) is, independently, uniformly distributed between
0 and 1. For a fixed parameter $q_{2}$, all employees with performance
below $1-q_{2}$ are fired one-by-one in a random order. Hence the
number of fired employees follows a binomial distribution with parameter
$1-q_{2}$.
\end{itemize}
In both models, no firing occurs when only the boss remains - this
is the unique absorbing state. Below are the transition matrices for
these two models, starting from the four-person company on the far
right. (All empty entries are zeroes.){\arraycolsep=2pt\def\arraystretch{1.5}

\[
\begin{array}{cccccc}
\basisone & \basistwo & \basisthree & \basisfour & \basisfive & \basissix\\
\hline 1\\
\frac{1}{2} & \frac{1}{2}\\
\frac{1}{2} &  & \frac{1}{2}\\
 &  & \frac{3}{4} & \frac{1}{4}\\
 & \frac{3}{8} & \frac{3}{8} &  & \frac{1}{4}\\
 &  &  & \frac{1}{3} & \frac{2}{3}
\end{array}
\]
}
{\def\arraystretch{1.2}
\[
\begin{array}{cccccc}
\basisone & \basistwo & \basisthree & \basisfour & \basisfive & \basissix\\
\hline 1\\
(1-q)(1+q) & q^{2}\\
(1-q)(1+q) &  & q^{2}\\
(1-q)^{2}(1+3q) &  & 3q^{2}(1-q) & q^{3}\\
(1-q)^{2}(1+3q) & \frac{3}{2}q^{2}(1-q) & \frac{3}{2}q^{2}(1-q) &  & q^{3}\\
(1-q)^{3}(1+4q) & 2q^{2}(1-q)^{2} & 4q^{2}(1-q)^{2} & \frac{4}{3}q^{3}(1-q) & \frac{8}{3}q^{3}(1-q) & q^{4}
\end{array}
\]
}

Section \ref{sub:chain-cktrees} recasts these chains as the $\ter_{n_{0}}$
and $\binter_{n_{0}}$ chains repsectively on a decorated variant
of the Connes-Kreimer Hopf algebra of rooted trees. Section \ref{sub:ebasis-cktrees}
applies Theorem \ref{thm:tob2r-evectors-cocommutative} to produce
a full basis of right eigenfunctions (Theorem \ref{thm:spectrum-cktrees}),
using which we bound the expected numbers of ``inter-departmental
teams'' (Corollary \ref{cor:expectation-cktrees}). Section \ref{sub:trintober-cktrees}
extends the binomial model to a $\trintober$ chain, where employees
with outstanding work are promoted to a special status, independent
of the firings.

Another possible extension, not discussed here, is to give the company
a more general poset structure, where each employee may have multiple
direct supervisors. This uses a Hopf algebra of (unranked) posets
\cite[Ex. 2.3]{abs} which contains the Connes-Kreimer algebra of
trees. If the initial company structure $T_{0}$ and all its order
ideals are \emph{$d$-complete} \cite{dcompleteposets}, then the
promotion cascade algorithm above agrees with the $\ter$ and $\binter$
chains on the poset algebra. However, for more general posets, there
is no known method to generate a uniform linear extension, and it
is unclear how to define a firing/promotion process to keep the Hopf-algebraic
connection. 

We note also that much of the following analysis can be easily adapted
for Hopf algebras whose state space basis is a free-commutative monoid
(what \cite[Chap. 5]{mythesis} termed a ``free-commutative'' state
space basis.) The class of associated chains includes the rock-chipping
process of the introduction (a $\ter$ analogue of \cite[Sec. 4]{hopfpowerchains})
and the mining variant. They do not have a combining step. A few results
below require the additional hypothesis that $\Delta_{1,n-1}$ ``preserves
atomicity'' (i.e. all terms in $\Delta_{1,n-1}$ of a tree has a
tree in the second factor, as opposed to forests with more than one
connected component.) This is true for rock-chipping, but not for
mining.

\subsection{A connection to a decorated Connes-Kreimer Hopf algebra of trees\label{sub:chain-cktrees}}

The Connes-Kreimer Hopf algebra of trees arose independently from
the study of renormalisation in quantum field theory \cite{cktrees}
and of Runge-Kutta numerical methods of solving ordinary differential
equations \cite{rungekutta}. Relevant here is the decorated variant
from \cite[end of Sec. 2.1]{decoratedcktrees}, where each vertex
is labelled. In the present application, the labels are the job positions,
not the employees currently holding each position. These labels are
necessary to distinguish abstractly isomorphic trees without reference
to the starting state $T_{0}$. 

Take a quotient of this decorated tree algebra so the root becomes
unlabelled. This ensures that there is a unique rooted forest on one
vertex - call it $\bullet$ - which will simplify the notation slightly.
Below are the aspects of this Hopf algebra relevant to the present
Markov chain application; its full Hopf structure follows easily from
\cite[Sec. 2]{cktrees2}. (It is probably possible to run the subsequent
analysis using a Hopf monoid \cite[Sec. 13.3.1]{hopfmonoids} instead.)
\begin{itemize}
\item A basis is the set of all decorated rooted forests - that is, each
connected component has a distinguished root vertex, and each non-root
vertex is assigned one of a finite set of labels. (In contrast to
the concept of increasing labelling below, each label can appear on
multiple vertices of a forest, or not at all.)
\item The degree of a forest is its number of vertices.
\item The product of two forests is their disjoint union (preserving all
labels). Hence this Hopf algebra is commutative.
\item The partial coproduct of a forest $x$ on $n$ vertices is $\Delta_{1,n-1}(x)=\sum_{v}\bullet\otimes x\backslash v$
(preserving all labels in the second factor), where the sum runs over
all leaves $v$ of $x$. For example, 
\[
\Delta_{1,7}\left(\treeex\right)=\bullet\otimes\left(\coprodone+\coprodtwo+\coprodthree+\coprodfour\right).
\]

\end{itemize}

Because of coassociativity, $\Delta_{1,\dots,1,n-r}(x)=\sum\bullet^{\otimes r}\otimes x\backslash\{v_{1},\dots v_{r}\}$,
summing over all choices of $v_{1},\dots,v_{r}$ such that $v_{i}$
is a leaf of $x\backslash\{v_{1},\dots,v_{i-1}\}$. In particular,
the coefficient of $\bullet^{\otimes n}$ in $\Delta_{1,\dots,1}(x)$
enumerates the ways to successively remove leaves from $x$, or, equivalently,
the \emph{increasing labellings} of the vertices of $x$ - that is,
each of the labels $1,2,\dots,n$ occur once, and the label of a parent
is less than the label of the child. An example is in Figure \ref{fig:increasinglabelling}.
By Theorem \ref{thm:cppchain}, this coefficient is the reweighting
function $\eta$. (In the more general case of posets, it is the number
of linear extensions.) \cite[Eq. 1.1]{treehookwalk} gives a hook
length formula for the number of increasing labellings of a forest
$x$: 
\begin{equation}
\eta(x)=\frac{\deg x!}{\prod_{v\in x}h(v)},\label{eq:eta-cktree}
\end{equation}
where $h(v)$ is the number of vertices in the \emph{hook} $H(v)$
of vertex $v$: the hook consists of $v$, its children, its grandchildren,
... ; see the top of Figure \ref{fig:treehookwalk}. (In the interpretation
of a tree as a company structure, the hook of employee $v$ consists
of everyone further down in superiority from $v$, including $v$
himself.) One proof of this formula goes via an algorithm for uniformly
generating an increasing labelling, which corresponds to the ``promotion
cascade'' process described above.

\begin{figure}
\setlength{\unitlength}{2mm} 
\begin{center} 
\begin{picture} (15,12) (-3,-3)   
\put(5,7){\circle*{1}}  
\put(5,8){1}  
\put(5,7){\line(-3, -4){3}} 
\put(2,3){\circle*{1}} 
\put(0,3){3}  
\put(2,3){\line(0, -1){4}} 
\put(2,-1){\circle*{1}} 
\put(0,-1){8}  
\put(5,7){\line(3, -4){3}} 
\put(8,3){\circle*{1}} \put(9,3){2}  
\put(8,3){\line(-3, -4){3}} 
\put(5,-1){\circle*{1}} 
\put(4.5,-3){5}  
\put(8,3){\line(0, -1){4}} 
\put(8,-1){\circle*{1}} 
\put(7.5,-3){7}  
\put(8,3){\line(3, -4){3}} 
\put(11,-1){\circle*{1}} 
\put(12,-1){4}  
\put(11,-1){\line(0, -1){3}} 
\put(11,-4){\circle*{1}} 
\put(12,-4){6}  
\end{picture} 
\end{center}\protect\caption{\label{fig:increasinglabelling}An increasing labelling of a tree.}
\end{figure}
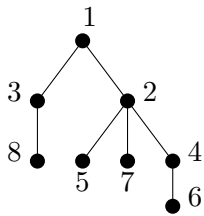

\begin{thm}[Sagan-Yeh hook walk]
 \label{thm:treehookwalk} \cite[Th. 1]{treehookwalk} For a tree
$T$ on $n$ vertices, the \emph{Sagan-Yeh hook walk} is the following
recursive process:
\begin{enumerate}[label=\arabic*.]
\item  Choose a vertex $v$ of $T$ uniformly.
\item If $v$ is a leaf, assign the label $n$ to $v$.
\item Else, uniformly choose a vertex $w$ in the hook of $v$, and return
to step 2 with $w$ in place of $v$.
\end{enumerate}

Hence the walk terminates with some leaf receiving the label $n$.
Remove this leaf and re-apply the walk to the remaining tree on $n-1$
vertices, and repeat until the root is assigned the label 1. This
generates an increasing labelling of $T$ uniformly.\qed

\end{thm}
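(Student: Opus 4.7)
The plan is induction on $n$, the number of vertices of $T$. The base case $n = 1$ is trivial. For the inductive step, it suffices to prove that the hook walk terminates at each leaf $\ell$ with probability exactly $\eta(T \setminus \ell)/\eta(T)$: then, by the inductive hypothesis, the remaining iterations produce a uniform increasing labelling of $T \setminus \ell$, so each increasing labelling of $T$ arises with probability
$$\frac{\eta(T \setminus \ell)}{\eta(T)} \cdot \frac{1}{\eta(T \setminus \ell)} = \frac{1}{\eta(T)},$$
which is the required uniform distribution.

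The task thus reduces to computing $P(\text{walk ends at } \ell)$. Each step of the walk moves into the hook of the current vertex, which lies below it in the tree, so once the walk leaves the ancestor-chain of $\ell$ it can never return. Consequently any successful walk visits only $\ell$ and some of its strict ancestors. Write $w_0 = \ell, w_1, \ldots, w_d$ for $\ell$ together with its strict ancestors, with $w_{i+1}$ the parent of $w_i$; a successful walk is determined by the subset $S \subseteq \{w_0, \ldots, w_d\}$ of vertices it visits, which necessarily contains $w_0$. The corresponding path has probability $\frac{1}{n} \prod_{w_i \in S,\, i > 0} (h(w_i) - 1)^{-1}$: the factor $\frac{1}{n}$ is the uniform initial choice, and at each non-terminal vertex $w_i$ exactly one specific descendant is required, chosen out of $h(w_i) - 1$ strict hook elements (note that if the convention allows choosing $w_i$ itself from $H(w_i)$, the geometric self-loop contributes the same effective transition probability $1/(h(w_i)-1)$).

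Summing over subsets $S \ni w_0$ factors neatly as
$$P(\text{end at }\ell) = \frac{1}{n} \prod_{i=1}^{d} \left(1 + \frac{1}{h(w_i)-1}\right) = \frac{1}{n} \prod_{i=1}^{d} \frac{h(w_i)}{h(w_i) - 1},$$
since each strict ancestor is independently visited or skipped. On the other hand, removing the leaf $\ell$ decreases the hook length by one at exactly the strict ancestors $w_1, \ldots, w_d$ and leaves the others unchanged, so the hook length formula \eqref{eq:eta-cktree} yields
$$\frac{\eta(T \setminus \ell)}{\eta(T)} = \frac{(n-1)!}{n!}\prod_{i=1}^{d} \frac{h(w_i)}{h(w_i)-1},$$
matching the walk probability exactly. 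The main obstacle is the path-counting in the middle paragraph; everything else is bookkeeping against the hook length formula, and the ``visit-or-skip'' independence is what makes the telescoping fall out cleanly.
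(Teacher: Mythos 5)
Your proof is correct. The paper does not prove this statement at all --- it is quoted from Sagan--Yeh \cite[Th. 1]{treehookwalk} --- and your argument (reduce to showing the walk terminates at a leaf $\ell$ with probability $\eta(T\backslash\ell)/\eta(T)$, then compute that probability by summing over the subset of ancestors of $\ell$ visited, which telescopes against the hook length formula) is essentially the standard Sagan--Yeh computation, with the self-loop convention correctly handled. The only point worth flagging is that you invoke Equation \ref{eq:eta-cktree} as an input, so your argument is only non-circular because that hook length formula admits an independent proof (e.g.\ a straightforward induction over the subtrees of the root); since the paper cites it as a separate fact, this is fine.
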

\begin{figure}
\setlength{\unitlength}{1mm} \begin{center} \begin{picture} (150,50) (15,160)   \put(62.5,190){\hookwalk}  \put(65,195){\vector(-4, -5){20}} \put(52,184){$\frac{1}{4}$} \put(30,152){\hookwalkone}  \put(70,195){\vector(-2, -5){10}} \put(62,184){$\frac{1}{4}$} \put(52,152){\hookwalktwo}  \put(75,195){\vector(1, -5){5}} \put(78,184){$\frac{1}{4}$} \put(72,152){\hookwalkthree} \put(90,165){\vector(1, 0){8}} \put(93,166){$1$} \put(80,195){\vector(4, -5){20}} \put(91,184){$\frac{1}{4}$} \put(96,152){\hookwalkfour}  \end{picture} \end{center}\protect\caption{\label{fig:treehookwalk}Steps 2 and 3 of the Sagan-Yeh hook walk,
for this choice of vertex $v$. The blue vertices in the top diagram
are the hook of $v$.}
\end{figure}
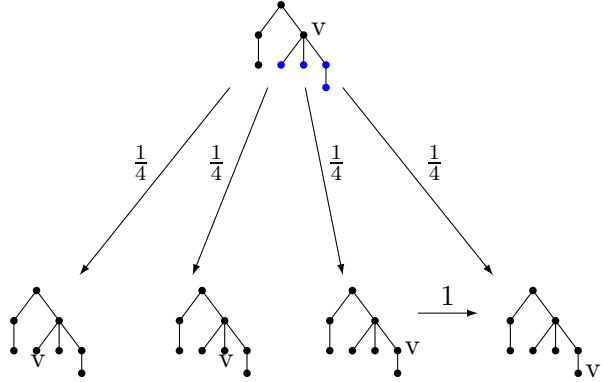

The remainder of this section will prove the following.
\begin{thm}[Interpretation of top-to-random chains on trees]
 \label{thm:chain-cktrees} The chains driven by $\ter_{n_{0}}$
and $\binter_{n_{0}}(q_{2})$ on the decorated Connes-Kreimer Hopf
algebra of trees, starting from a tree $T_{0}$ on $n_{0}$ vertices,
remain in the subset of states given by 
\[
\{\bullet^{n_{0}-n}\amalg T|T\mbox{ is a rooted subtree of }T_{0}\mbox{ on }n\mbox{ vertices}\}.
\]
Under the equivalence $\bullet^{n_{0}-n}\amalg T\rightarrow T$, these
chains have the ``employee firing'' description given at the beginning
of Section \ref{sec:cktrees}.
\end{thm}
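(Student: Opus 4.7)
The proof has two parts: closure of the state space and matching of transition probabilities.

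For closure, the three-step description of Theorem \ref{thm:cppchain} is transparent: the product of Connes-Kreimer is disjoint union, so the recombining step is deterministic, and one step of either operator takes $\bullet^{n_0-n}\amalg T$ to $\bullet^r\amalg(\bullet^{n_0-n}\amalg T\setminus\{v_1,\ldots,v_r\})$ where the $v_i$ are iterated leaves. Each $v_i$ is either an isolated $\bullet$ or a leaf of the current tree-part, so the resulting state is again of the required form.

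For $\ter_{n_0}$, a direct computation of the three-step description combined with the hook-length formula \eqref{eq:eta-cktree} gives
\[
\hat K\bigl(\bullet^{n_0-n}\amalg T,\ \bullet^{n_0-n+1}\amalg(T\setminus\ell)\bigr)=\frac{n}{n_0}\cdot\frac{\eta(T\setminus\ell)}{\eta(T)},
\]
plus a self-loop $(n_0-n)/n_0$. The prefactor $n/n_0$ matches the single-model firing probability, and by the Sagan-Yeh theorem (Theorem \ref{thm:treehookwalk}) the ratio $\eta(T\setminus\ell)/\eta(T)$ is exactly the probability that the cascade from a uniformly-fired employee of $T$ removes leaf $\ell$.

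For $\binter_{n_0}(q_2)$, the analogous but more elaborate computation aggregates over iterated-leaf tuples $(v_1,\ldots,v_r)$ that mix isolated $\bullet$'s and $T$-vertices; counting these via the multinomial expansion of $\Delta^{[r+1]}(\bullet^{n_0-n})$ and the count $\eta(S_T)$ of admissible $T$-leaf orderings, substituting into the binomial series for $\binter_{n_0}$, and collapsing the $\bullet$-dependence via $\sum_k\binom{n_0-n}{k}(1-q_2)^kq_2^{n_0-n-k}=1$ yields
\[
\hat K\bigl(\bullet^{n_0-n}\amalg T,\ \bullet^{n_0-n'}\amalg T'\bigr)=\binom{n}{m}(1-q_2)^mq_2^{n-m}\,\frac{\eta(S_T)\,\eta(T')}{\eta(T)},
\]
with $S_T=T\setminus T'$ and $m=|S_T|$, plus an extra $(1-q_2)^n$ when $T'=\text{root}$ coming from sequences that also remove the root and land at the same Hopf basis element $\bullet^{n_0}$. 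The binomial prefactor is the probability that $m$ of the $n$ current employees are marked, and the $\eta$-ratio is, by iterating Sagan-Yeh along any of the $\eta(S_T)$ admissible firing orders, the probability that the ensuing cascades remove precisely $S_T$. The main technical obstacle is the combinatorial bookkeeping in this last step—tracking mixed $(\bullet,T)$-orderings and performing the binomial cancellation—together with verifying via exchangeability that the binomial-mark-uniform-order employee process is in-distribution equivalent to iterated single-model firings (each next-fired employee being uniform over remaining tree positions through the bijection between surviving employees and occupied positions).
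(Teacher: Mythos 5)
Your proposal is correct, and your closed forms check out: they make every row of the transition matrix sum to $1$, and for the binomial model they give, e.g., $(1-q)^{3}(1+3q)$ for the entry from the four-vertex tree to $\bullet$ (the $(1+3q)$, $(1+4q)$ factors printed in the first column of the paper's example matrix appear to be typos, since those rows as printed do not sum to $1$, so do not be alarmed by the apparent mismatch). Your route is, however, genuinely different from the paper's. The paper never computes $\hatk$ explicitly: it rewrites one step of Theorem \ref{thm:cppchain} as ``pick a uniform increasing labelling of the whole forest $\bullet^{n_{0}-n}\amalg T$ and delete the vertices carrying the top $r$ labels'', observes that such a labelling factors into an independent uniform choice of labels for the singletons and a uniform increasing labelling of $T$, and then for $\ter_{n_{0}}$ reads off the laziness $\frac{n_{0}-n}{n_{0}}$ versus one top-label deletion in $T$ (a firing cascade by Theorem \ref{thm:treehookwalk}), while for $\binter_{n_{0}}(q_{2})$ a thinning argument (each label independently ``of interest'' with probability $1-q_{2}$) shows the number of deletions landing in $T$ is again binomial with parameter $1-q_{2}$. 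You instead compute the transition probabilities in closed form --- your collapse $\sum_{k}\binom{n_{0}-n}{k}(1-q_{2})^{k}q_{2}^{n_{0}-n-k}=1$ is exactly the right cancellation, yielding $\binom{n}{m}(1-q_{2})^{m}q_{2}^{n-m}\,\eta(S_{T})\eta(T')/\eta(T)$ plus the $(1-q_{2})^{n}$ correction at the absorbing state --- and then match against the firing model, which forces you to make explicit the exchangeability fact that firing binomially-marked employees in a uniform random order yields the same law of removed positions as iterated uniform single firings (so that iterated Sagan--Yeh applies); the paper relies on this identification too, but only implicitly. What each approach buys: yours produces the explicit transition matrix as a by-product (useful, and it is what exposed the typos above) at the cost of heavier bookkeeping, which you sketch rather than carry out in full; the paper's distributional argument is shorter, avoids all explicit counting, and transfers verbatim to general forests.
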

It is crucial that all Hopf-algebraic calculations use the form $\bullet^{n_{0}-n}\amalg T$
as opposed to $T$. And note that the equivalence sends $\bullet^{n_{0}}$
to $\bullet$, not to the empty tree.
\begin{proof}
Begin by substituting the definition of product and coproduct into
the 3-step description of Theorem \ref{thm:cppchain}. For $\ter_{n_{0}}$,
the first and third steps are trivial, and the second translates to:
for each leaf $v$ of a forest $x$ on $n_{0}$ vertices, move from
$x$ to $v\amalg x\backslash v$ with probability $\frac{\eta(v)\eta(x\backslash v)}{\eta(x)}=\frac{\eta(x\backslash v)}{\eta(x)}$.
(It is impossible to move to forests not of the form $x\backslash v$.)
Notice that the increasing labellings of $x\backslash v$ are precisely
the increasing labellings of $x$ where $v$ has label $n_{0}$. Hence
each step of the chain is: uniformly pick an increasing labelling
of $x$, and isolate the vertex receiving label $n_{0}$. Similarly,
the $\binter_{n_{0}}(q_{2})$ chain uniformly picks an increasing
labelling of $x$, then isolates the vertices with labels $n_{0},n_{0}-1,\dots,n_{0}-r+1$,
where $r\in[0,n_{0}]$ has a binomial distribution with parameter
$1-q_{2}$.

Now specialise to the case where the rooted forest $x$ is of the
form $\bullet^{n_{0}-n}\amalg T$, for $T$ a tree on $n$ vertices.
The key observation is this: a uniform choice of an increasing labelling
of $\bullet^{n_{0}-n}\amalg T$ is equivalent to a uniform (ordered)
choice of $n_{0}-n$ distinct labels from $\{1,\dots,n_{0}\}$ for
the singletons, plus an independent uniform choice of increasing labelling
for $T$ (the ``standardisation'' of the original labelling for
$\bullet^{n_{0}-n}\amalg T$). 

In the case of $\ter_{n_{0}}$, there is a probability of $\frac{n_{0}-n}{n_{0}}$
that the label $n_{0}$ is amongst those chosen for the singletons:
in this case, the chain remains at $\bullet^{n_{0}-n}\amalg T$. With
the complementary probability $\frac{n}{n_{0}}$, the chain isolates
the vertex with the largest label in a random increasing labelling
of $T$, which, by Theorem \ref{thm:treehookwalk}, is precisely the
firing of a uniformly chosen employee and the subsequent promotion
cascade.

As for the $\binter_{n_{0}}$ chain: if $r'$ of the $r$ labels $n_{0},n_{0}-1,\dots,n_{0}-r+1$
were assigned to singletons, then the chain would remove $r-r'$ vertices
from $T$ according to the hook walk / promotion cascade. Hence it
suffices to show that, if $r\in[0,n_{0}]$ follows a binomial distribution
with parameter $1-q_{2}$, then $r-r'\in[0,n]$ is also binomial with
parameter $1-q_{2}$. Because the choice of labels for the singletons
is uniform (independent of the value of the label), the situation
has this alternative description: pick $n-n_{0}$ distinct labels
from $n$ labels, and $r'$ is the number of $r$ ``labels of interest''
that were picked. $r$ having a binomial distribution is equivalent
to each label independently having probability $1-q_{2}$ of being
a ``label of interest''. Thus $r'$ is binomial with parameter $1-q_{2}$,
and so is the number of unpicked labels of interest $r-r'$.

(More generally, for forests, $\ter_{n_{0}}$ selects a tree with
probability proportional to its number of vertices and removes a vertex
as per the hook walk. $\binter_{n_{0}}$ removes a binomial number
of vertices independently from each tree in the forest.)
\end{proof}

\subsection{The spectrum and right eigenbasis\label{sub:ebasis-cktrees}}

This section seeks to prove the following:
\begin{thm}[Eigenfunctions of employee-firing chains]
 \label{thm:spectrum-cktrees}Consider the chain on organisational
structures, started from $T_{0}$ with $n_{0}$ employees/vertices.
\begin{enumerate}
\item The eigenvalues of the single and binomial models are $\frac{j}{n_{0}}$
and $q_{2}^{n_{0}-j}$ respectively, for $0\leq j\leq n_{0}-2$ and
$j=n_{0}$.
\item The eigenvalue 1 ($j=n_{0}$) has multiplicity 1. For $j<n_{0}$,
the multiplicity of the eigenvalue indexed by $j$ is the number of
rooted subtrees of $T_{0}$ on $n_{0}-j$ vertices.
\item For $T'\neq\bullet$, the right eigenfunction $\f_{T'}$ corresponding
to the rooted subtree $T'$ on $n'$ vertices is
\[
\f_{T'}:=\binom{n}{n'}\Prob\left(\substack{\mbox{After firing }n-n'\mbox{ uniformly chosen employees (and the subsequent }\\
\mbox{promotion cascades), the remaining company structure is }T'
}
\right).
\]
These, together with the constant function 1, form an eigenbasis.
\end{enumerate}
\end{thm}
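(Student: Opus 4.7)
The plan is to apply Theorem \ref{thm:tob2r-evectors-cocommutative} to the dual Hopf algebra $\calhdual$, which is cocommutative (dual to the commutativity of the decorated Connes-Kreimer algebra $\calh$). By Proposition \ref{prop:doobtransform-efns}.R, eigenvectors of the dual of $m\Delta_{P}$ correspond to right eigenfunctions of the chain via $\f(x)=f(x)/\eta(x)$; and by Theorem \ref{thm:chain-cktrees}, the chain is driven by $\ter_{n_{0}}$ (single model) or $\binter_{n_{0}}(q_{2})$ (binomial model) on $\calh_{n_{0}}$, restricted to the closed subset $\{\bullet^{n_{0}-n}\amalg T : T\text{ a rooted subtree of }T_{0}\}$.

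A functional on $\calh$ is primitive iff it vanishes on disconnected forests (i.e., on products of positive-degree basis elements); hence the primitive subspace of $\calhdual$ has basis $\{T^{*}: T\text{ a rooted tree}\}$, split as $\calp_{1}=\{\bullet^{*}\}$ and $\calp_{>1}=\{T^{*}:|T|\geq 2\}$. For each rooted subtree $T'$ of $T_{0}$ on $n'\geq 2$ vertices, I plug $c_{1}=\cdots=c_{n_{0}-n'}=\bullet^{*}$ and $p=(T')^{*}$ into Theorem \ref{thm:tob2r-evectors-cocommutative}.ii---the required condition $\Delta_{1,n'-1}((T')^{*})=0$ holds automatically since $(T')^{*}$ vanishes on the products $\bullet\cdot z$, which are disconnected forests---to obtain an eigenvector $f_{T'}\propto(\bullet^{*})^{n_{0}-n'}(T')^{*}$ of eigenvalue $\beta_{n_{0}-n'}$.

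To evaluate $f_{T'}(\bullet^{n_{0}-n}\amalg T)$, I would extract the coefficient of $\bullet^{\otimes(n_{0}-n')}\otimes T'$ in $\Delta_{1^{n_{0}-n'},n'}(\bullet^{n_{0}-n}\amalg T)$ by distributing the coproduct over the forest: the $n_{0}-n$ singletons and some $n-n'$ leaves successively cut from $T$ together supply the $n_{0}-n'$ initial $\bullet$-slots, while $T'$ arises as the root-component of $T$ after the cuts. A direct count yields $\binom{n_{0}-n'}{n_{0}-n}(n_{0}-n)!$ times the number of leaf-removal sequences from $T$ to $T'$ (which vanishes unless $T'\subseteq T$). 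Dividing by $\eta(\bullet^{n_{0}-n}\amalg T)=(n_{0}!/n!)\,\eta(T)$ from \eqref{eq:eta-cktree}, and identifying via Theorem \ref{thm:treehookwalk} that the firing process from $T$ reaches $T'$ after $n-n'$ steps with total probability (number of sequences)$\cdot\eta(T')/\eta(T)$, recovers the claimed formula for $\f_{T'}$ up to a scalar depending only on $T'$, which is immaterial for eigenfunctionality.

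The family $\{\f_{T'}\}_{T'\subseteq T_{0},\,|T'|\geq 2}$ is unitriangular with respect to the inclusion order on rooted subtrees of $T_{0}$: $\f_{T'}(T'')=0$ unless $T'\subseteq T''$, while $\f_{T'}(T')=1\neq 0$. Together with the constant function (eigenvalue $1$), which is linearly independent since all $\f_{T'}$ with $T'\neq\bullet$ vanish at the absorbing state $\bullet$, the total count is $1+\#\{T'\subseteq T_{0}:|T'|\geq 2\}=\#\{\text{rooted subtrees of }T_{0}\}$, matching the dimension of the restricted state space. This yields the basis claim (iii) and, combined with Theorem \ref{thm:tob2r-evectors-cocommutative}.i, the eigenvalues (i) and multiplicities (ii). The main obstacle is the coproduct computation in the third step: isolating the $\bullet^{\otimes(n_{0}-n')}\otimes T'$ coefficient requires careful case analysis of how the singletons from $\bullet^{n_{0}-n}$ interleave with the $T$-side admissible cuts across the slots of the iterated coproduct, and matching the resulting combinatorial factor against the firing-cascade probability only falls out cleanly after the hook length formula and the Sagan-Yeh correspondence are both invoked.
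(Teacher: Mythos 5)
Your proposal is correct and follows essentially the same route as the paper: dualise, use the tree duals $T'^{*}$ as primitives, feed $(\bullet^{*})^{n_0-n'}T'^{*}$ into Theorem \ref{thm:tob2r-evectors-cocommutative} and Proposition \ref{prop:doobtransform-efns}.R, identify the resulting coefficient with the firing probability via the Sagan--Yeh hook walk, and finish with a triangularity-plus-dimension-count argument on the restricted state space. The only differences are cosmetic: you extract the coefficient directly using the hook length formula for $\eta$ and order the subtrees by inclusion, whereas the paper reinterprets the same quantity as a uniform increasing-labelling probability split into two independent events and uses triangularity by vertex count.
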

For the chain explicited at the start of Section \ref{sec:cktrees},
the basis of right eigenfunctions constructed in part iii above are
the columns of the following table (all empty entries are zeroes):
\[
\begin{array}{cccccc}
1 & \f{}_{\basistwo} & \f{}_{\basisthree} & \f{}_{\basisfour} & \f{}_{\basisfive} & \f{}_{\basissix}\\
\hline 1\\
1 & 1\\
1 &  & 1\\
1 &  & 3 & 1\\
1 & \frac{3}{2} & \frac{3}{2} &  & 1\\
1 & 2 & 4 & \frac{4}{3} & \frac{8}{3} & 1
\end{array}
\]
The corresponding eigenvalues for the single model are $1,\frac{1}{2},\frac{1}{2},\frac{1}{4},\frac{1}{4},0$,
and for the binomial model are $1,q^{2},q^{2},q^{3},q^{3},q^{4}$.

Here is a quick way to see parts i and ii of the theorem: the transition
matrices for both chains are triangular if the states are ordered
by the number of employees. Then the eigenvalues are the diagonal
entries, i.e. the probability of remaining at a state $T'$ (on $n'$
vertices), which is $\frac{n_{0}-n'}{n_{0}}$ for the single model
and $q_{2}^{n'}$ for the binomial model. The proof of iii is at the
end of this section.

Note that part ii does not follow from the general result on the spectrum
of descent operator chains (Theorem \ref{thm:evalues}): that gives
the eigenvalue multiplicities on the full state space basis of all
rooted forests on $n_{0}$ vertices, of which the states of the employee-firing
chains are a proper subset. Similarly, part iii is not an immediate
consequence of Theorem \ref{thm:tob2r-evectors-cocommutative}.iv.
Instead, the proof below will use the general theory to construct
the eigenfunctions, then show they stay linearly independent when
restricted to this smaller state space.

Here's an application before the proof. Recall from Example \ref{ex:rooted-trees}
that a department head is an employee whose direct superior is the
boss, and a department is everyone further down the chain of superiority
from a department head. (So a department is a connected component
of the tree with the root removed). Write $n^{(i)}$ for the number
of employees in department $i$. For example, the company in Figure
\ref{fig:cktrees-company} has two departments, with $n^{(1)}=2$
and $n^{(2)}=5.$ Because the boss is not in any department, it always
holds that $1+\sum n^{(i)}=n$. 

Suppose the company receives a project that requires a team of $s_{i}$
employees from department $i$. The number of ways to choose such
a team is $\prod_{i}\binom{n^{(i)}}{s_{i}}$.
\begin{cor}
\label{cor:expectation-cktrees} Let $s_{i}$ be any sequence of integers,
and use the $n^{(i)}$ notation for department sizes as above.
\begin{enumerate}
\item $\f(T):=n\prod_{i}\binom{n^{(i)}}{s_{i}}$ is a right eigenfunction
of eigenvalue $\frac{n_{0}-1-\sum_{i}s_{i}}{n_{0}}$ (resp. $q_{2}^{1+\sum s_{i}}$)
for the single (resp. binomial) model.
\item After $t$ months under the single (resp. binomial) model, starting
from a company of $n_{0}$ employees,
\[
\Expect\left(n_{t}\prod_{i}\binom{n_{t}^{(i)}}{s_{i}}\right)=\beta^{t}n_{0}\prod_{i}\binom{n_{0}^{(i)}}{s_{i}},
\]
where $\beta^{t}=\frac{n_{0}-1-\sum_{i}s_{i}}{n_{0}}$ for the single
model, and $\beta^{t}=q_{2}^{1+\sum s_{i}}$ for the binomial model.
\item In particular, the expected number of teams consisting of $s_{i}$
employees in department $i$ satisfies 
\[
\beta^{t}\prod_{i}\binom{n_{0}^{(i)}}{s_{i}}\leq\Expect\left(\prod_{i}\binom{n_{t}^{(i)}}{s_{i}}\right)\leq\beta^{t}\frac{n_{0}}{1+\sum s_{i}}\prod_{i}\binom{n_{0}^{(i)}}{s_{i}}
\]
for both the single and binomial models. Also, in the single model,
\[
\Expect\left(\prod_{i}\binom{n_{t}^{(i)}}{s_{i}}\right)\leq\beta^{t}\frac{n_{0}}{n_{0}-t}\prod_{i}\binom{n_{0}^{(i)}}{s_{i}}.
\]
 
\end{enumerate}
\end{cor}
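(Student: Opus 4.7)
The substantive content is part (i); parts (ii) and (iii) will fall out as formal consequences. The plan is to verify (i) for $\ter_{n_{0}}$ by direct calculation using Theorems \ref{thm:cppchain} and \ref{thm:chain-cktrees}, then to deduce the binomial case from Proposition \ref{prop:tob2r-poly}.

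For the single model, from a state $T$ with $n$ vertices the only nonzero transition probabilities are $\hatk(T,T)=(n_{0}-n)/n_{0}$ (no firing) and $\hatk(T,T\setminus \ell)=(n/n_{0})\,\eta(T\setminus \ell)/\eta(T)$ for each leaf $\ell$ of $T$ (cascade terminating at $\ell$). Writing $\f(T)=n\,g(T)$ with $g(T):=\prod_{i}\binom{n^{(i)}}{s_{i}}$ and substituting into the eigenfunction equation $\sum_{T'}\hatk(T,T')\f(T')=\beta\f(T)$, elementary algebra reduces the claim to the identity
\[
\sum_{\ell}\frac{\eta(T\setminus \ell)}{\eta(T)}\,g(T\setminus \ell)\;=\;\frac{n-1-s}{n-1}\,g(T),\qquad s:=\sum_{i}s_{i}.
\]
Since $g(T\setminus \ell)/g(T)=(n^{(i)}-s_{i})/n^{(i)}$ whenever the non-boss leaf $\ell$ lies in department $i$, grouping the sum by department further reduces the identity to
\[
\sum_{i}\frac{n^{(i)}-s_{i}}{n^{(i)}}\,q_{i}\;=\;\frac{n-1-s}{n-1},
\]
where $q_{i}$ denotes the probability that the leaf removed in one step of the chain lies in department $i$.

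The main obstacle is computing $q_{i}$, which I would handle via the Sagan--Yeh hook-walk interpretation of the promotion cascade (Theorem \ref{thm:treehookwalk}). The walk starts at a uniformly chosen employee; the hook of any non-boss vertex is contained in that vertex's own department, so a walk starting in department $i$ necessarily terminates at a dept-$i$ leaf, while a walk starting at the boss resamples a uniform vertex of $T$ at its next step and then behaves as an independent copy of the original walk. This yields the self-referential recursion $q_{i}=n^{(i)}/n+q_{i}/n$, hence $q_{i}=n^{(i)}/(n-1)$, and the required identity follows at once from $\sum_{i}n^{(i)}=n-1$. For the binomial model, commutativity of the decorated Connes--Kreimer algebra together with Proposition \ref{prop:tob2r-poly} expresses $\binter_{n_{0}}(q_{2})$ as a polynomial in $\ter_{n_{0}}$; the chain's transition matrix is then the same polynomial in that of $\ter_{n_{0}}$, so $\f$ is also an eigenfunction of $\binter_{n_{0}}(q_{2})$, with eigenvalue obtained by evaluating that polynomial at $(n_{0}-1-s)/n_{0}$, which a standard binomial identity shows equals $q_{2}^{1+s}$.

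Part (ii) is then Proposition \ref{prop:eigenfunction-expectation} applied to $\f$. For (iii), whenever $\prod_{i}\binom{n_{t}^{(i)}}{s_{i}}$ is nonzero each $n_{t}^{(i)}\geq s_{i}$, and hence $1+s\leq n_{t}\leq n_{0}$; dividing the expectation identity from (ii) by these bounds yields the main two-sided inequality. The sharper single-model upper bound uses the deterministic inequality $n_{t}\geq n_{0}-t$, since at most one employee leaves per step.
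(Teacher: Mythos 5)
Your proposal is correct, but it proves part (i) by a genuinely different route than the paper. The paper does not verify the eigenfunction equation directly: it takes the eigenbasis $\f_{T'}$ already constructed in Theorem \ref{thm:spectrum-cktrees}.iii, sums over all rooted subtrees $T'$ with exactly $s_{i}$ employees in department $i$ (all sharing the eigenvalue indexed by $j$ with $n_{0}-j=1+\sum_{i}s_{i}$), and evaluates that sum using the observation that promotion cascades never leave the fired employee's department, so the probability in the definition of $\f_{T'}$ collapses to the hypergeometric quantity $\prod_{i}\binom{n^{(i)}}{s_{i}}/\binom{n-1}{\sum_{i}s_{i}}$; since each $\f_{T'}$ is simultaneously an eigenfunction for $\ter_{n_{0}}$ and $\binter_{n_{0}}(q_{2})$, both models are handled at once. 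You instead verify the eigenfunction equation from scratch for the single model, reducing it to the one-step identity $\sum_{i}\frac{n^{(i)}-s_{i}}{n^{(i)}}q_{i}=\frac{n-1-s}{n-1}$ and computing $q_{i}=n^{(i)}/(n-1)$ via the hook-walk recursion (your recursion is right: the boss's hook is the whole tree, so a walk at the boss restarts, while a walk at any non-boss vertex stays inside that vertex's department — essentially the same ``cascades stay within departments'' fact the paper uses); you then transfer to the binomial model through Proposition \ref{prop:tob2r-poly} and commutativity. That transfer is legitimate but needs the small (routine) justifications you elide: the operator identity descends to the Doob-transformed transition matrices because both chains use the same rescaling $\eta$ and transposition commutes with polynomials of a single matrix, and it then restricts to the employee-firing state space because that set of states is closed under both chains. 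What each approach buys: the paper's is shorter given Theorem \ref{thm:spectrum-cktrees} and exhibits $\f$ explicitly as a nonnegative combination of basis eigenfunctions for both models simultaneously; yours is more self-contained (it never needs the eigenbasis of Theorem \ref{thm:spectrum-cktrees}.iii, only Theorems \ref{thm:cppchain}, \ref{thm:treehookwalk} and \ref{thm:chain-cktrees}), at the cost of treating the two models asymmetrically. Your parts (ii) and (iii) coincide with the paper's (Proposition \ref{prop:eigenfunction-expectation}, then the bounds $1+\sum_{i}s_{i}\leq n_{t}\leq n_{0}$ and $n_{t}\geq n_{0}-t$). One shared caveat, not a defect relative to the paper: both arguments implicitly assume $\sum_{i}s_{i}\geq1$, since for $s=0$ the function $\f(T)=n$ fails the eigenfunction equation at the absorbing state $\bullet$ (your transition description, like the paper's sum over $T'\neq\bullet$, quietly excludes that degenerate case, which is harmless because $\f$ vanishes at $\bullet$ whenever some $s_{i}\geq1$).
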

\begin{proof}
We claim that, up to scaling, $\f$ is the sum of the $\f_{T'}$ of
Theorem \ref{thm:spectrum-cktrees}.iii, over all $T'$ with exactly
$s_{i}$ employees in department $i$. This sum is
\[
\left(\sum\f_{T'}\right)(T)=\binom{n}{1+\sum s_{i}}\Prob\left(\substack{\mbox{After firing }n-\sum s_{i}-1\mbox{ uniformly chosen employees (and the subsequent }\\
\mbox{promotion cascades), there are }s_{i}\mbox{ employees in department }i
}
\right).
\]
The key observation is that the promotions always occur within the
department of the fired employee, so it is unnecessary to consider
the promotion cascades. The probability of interest is simply that
of uniformly picking $\sum s_{i}$ employees (those who are not fired)
so that $s_{i}$ of them come from department $i$. Hence 
\[
\left(\sum\f_{T'}\right)(T)=\binom{n}{1+\sum s_{i}}\frac{\prod_{i}\binom{n^{(i)}}{s_{i}}}{\binom{n-1}{\sum s_{i}}}=\frac{n}{1+\sum s_{i}}\prod_{i}\binom{n^{(i)}}{s_{i}}.
\]
Part ii then follows by applying Proposition \ref{prop:eigenfunction-expectation}.
To see part iii, note that $n_{t}\leq n_{0}$, and, if $\f(T_{t})\neq0$,
it must be that $1+\sum s_{i}\leq n_{t}$ . The last statement uses
the alternative upper bound $n_{t}\geq n_{0}-t$ for the single model.
\end{proof}

\begin{proof}[Proof of Theorem \ref{thm:spectrum-cktrees}]
 The proof follows the argument of \cite[Proof of Th. 3.19]{hopfpowerchains}
to show that the functions $\f_{T'}$, as defined in part iii of the
theorem, are right eigenfunctions for the Markov chain, of the claimed
eigenvalues, and are linearly independent. Parts i and ii will then
follow immediately.

Recall that right eigenfunctions of a descent operator chain come
from eigenvectors in the dual Hopf algebra. By the ``duality of
primitives and generators'' \cite[Sec. 3.8.24]{primitivegeneratordual},
the elements $T^{*}$ that are dual to trees are primitive in this
dual algebra. Hence, for any tree $T'\neq\bullet$ on $n'$ vertices,
Theorem \ref{thm:tob2r-evectors-cocommutative}.iii asserts that $\bullet^{*n_{0}-n'}T'^{*}$
is a $\frac{n_{0}-n'}{n_{0}}$-eigenvector of $\ter_{n_{0}}$, and
a $q_{2}^{n'}$-eigenvector of $\binter_{n_{0}}(q_{2})$. The corresponding
right eigenfunction, by Proposition \ref{prop:doobtransform-efns}.R,
is 
\begin{align*}
\f_{T'}(\bullet^{n_{0}-n}\amalg T) & =\frac{1}{\eta(\bullet^{n_{0}-n}\amalg T)}\bullet^{*n_{0}-n'}T'^{*}\mbox{ evaluated on }\bullet^{n_{0}-n}\amalg T\\
 & =\frac{1}{\eta(\bullet^{n_{0}-n}\amalg T)}(\bullet^{*}\otimes\dots\otimes\bullet^{*}\otimes T'^{*})\Delta_{1,\dots,1,n'}(\bullet^{n_{0}-n}\amalg T).
\end{align*}
This is the probability that a uniformly chosen increasing labelling
of $\bullet^{n_{0}-n}\amalg T$ has the vertices labelled $1,2,\dots,n'$
forming a copy of $T'$. As before, view each increasing labelling
of $\bullet^{n_{0}-n}\amalg T$ as a (ordered) choice of $n_{0}-n$
labels for the singletons, together with an increasing labelling of
$T$. Since $T'\neq\bullet$, the desired condition is equivalent
to the singletons all having labels greater than $n'$, and the smallest
$n'$ labels in $T$ being assigned to the vertices of $T'$. These
two subconditions are independent; the first happens with probability
\[
\frac{(n_{0}-n')(n_{0}-n'-1)\dots(n-n'+1)}{n_{0}(n_{0}-1)\dots(n+1)}=\binom{n}{n'}/\binom{n_{0}}{n'},
\]
and the second is the probability that, after $n-n'$ firings, the
remaining company structure is $T'$. So the product of these two
numbers give an eigenfunction. Since any scalar multiple of an eigenfunction
is again an eigenfunction, we can multiply this by $\binom{n_{0}}{n'}$
(which is independent of $T$) and obtain $\f_{T'}$.

The linear independence of $\f_{T'}$ comes from a triangularity argument.
Clearly both factors in $\f_{T'}(T)$ are zero if $T$ has fewer than
$n'$ vertices (i.e. fewer vertices than $T'$). And if $T$ has exactly
$n'$ vertices, then $\f_{T'}(T)$ is the probability that $T=T'$,
so it evaluates to 1 at $T$ and 0 otherwise. 
\end{proof}

\subsection{An employee-firing chain with VPs \label{sub:trintober-cktrees}}

This section studies the $\trintober_{n}(q_{1},q_{2},q_{3})$ chains
on the Connes-Kreimer algebra (where $q_{1}+q_{2}+q_{3}=1$). This
chain involves both $\Delta_{1,n-1}$, the firing operator from above,
as well as a new operator $\Delta_{n-1,1}$, which removes from a
forest the root of one of its connected components. So now, in addition
to firing employees whose performance is in the interval $[0,q_{1}],$
there is a reward for employees performing in the interval $[q_{1}+q_{2},1]=[1-q_{3},1]$
- a special VP status is given to the person highest in the chain
of superiority above this good employee, who isn't yet a VP. The chain
keeps track of the forest structure of the non-VP employees of the
company; since VP status is for life, the chain ignores any positions
once it becomes VP status.

To state a result using the eigenfunctions of Theorem \ref{thm:tob2r-evectors-cocommutative}.ii,
more tree terminology is necessary. Recall that, if $v$ is a vertex
of the forest $x$, then $H(v)$ is the hook of $v$, consisting of
$v$, its children, its grandchildren, ... , and $h(v)$ is the number
of vertices in $H(v)$. For the company interpretation, $H(v)$ is
the set of employees under indirect supervision of $v$. A complementary
idea is the \emph{ancestors} $A(v)$ of $v$, consisting of $v$,
its parent, its grandparent, ... ; let $a(v)$ denote the number of
ancestors of $v$. This is the length of the superiority chain from
$v$ upwards, not including any VPs. $A(v)\backslash\{v\}$ is the
\emph{strict ancestors} of $v$.

Theorem \ref{thm:trintoberexpectation-cktrees} below gives a family
of functions $\fo_{j}$ whose average valuefalls by approximately
$q_{2}^{n_{0}-j}$ with each step of the chain. $\fo_{j}$ is the
expected number of teams, of size $n_{0}-j$, that some employee $u$
can assemble from his/her ``hook''. The project leader $u$ is chosen
with probability proportional to $\left(\frac{q_{3}}{q_{1}+q_{3}}\right)^{a(u)-1}\left(\frac{q_{1}}{q_{1}+q_{3}}\right)^{h(u)}$.
One may justify these probabilities as follows: we disfavour a project
leader with large $h(u)$ as he/she has many employees to manage,
that will take time away from this project. Large $a(u)$ is also
undesirable as he/she may have many other projects from these supervisors.
\begin{thm}[Approximate eigenfunction for the employee-firing chain with VPs]
 \label{thm:trintoberexpectation-cktrees} Let $\{X_{t}\}$ denote
the employee-firing chian with VPs as detailed above, starting with
a company of $n_{0}$ employees (and no VPs). For each integer $j\in[0,n-2]$,
define the following functions on the forest structure of the non-VP
employees: 
\[
\fo_{j}(x):=\sum_{u\in x}\binom{h(u)}{n_{0}-j}\left(\frac{q_{3}}{q_{1}+q_{3}}\right)^{a(u)-1}\left(\frac{q_{1}}{q_{1}+q_{3}}\right)^{h(u)}.
\]
(The binomial coefficient is 0 if $h(u)<n_{0}-j$.) Then 
\[
\Expect\left\{ \fo_{j}(X_{t})\right\} \leq q_{2}^{(n_{0}-j)t}\fo_{j}(X_{0})\max_{u\in X_{0}:h(u)\geq n_{0}-j}\left\{ \binom{n_{0}}{a(u)-1}\right\} .
\]

\end{thm}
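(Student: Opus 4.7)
The plan is to decompose $\fo_j$ as a linear combination of right eigenfunctions of the chain, use a sign analysis to discard the contributions of strictly smaller eigenvalues, and then bound the ``main'' contribution by $\max_u\binom{n_0}{a(u)-1}\cdot\fo_j(X_0)$ via combinatorial bookkeeping. The required eigenfunctions come from Theorem \ref{thm:tob2r-evectors-cocommutative}.ii applied in the cocommutative dual of the decorated Connes--Kreimer algebra: for each rooted tree $T'$ with $|T'|=n_0-j$, the duality of primitives and generators used in the proof of Theorem \ref{thm:spectrum-cktrees}.iii makes $T'^{*}\in\calh^{*}_{n_0-j}$ primitive in the dual, so $\Delta_{1,n_0-j-1}(T'^{*})=\Delta_{n_0-j-1,1}(T'^{*})=0$. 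Taking $p=T'^{*}$ and $c_1=\cdots=c_j=\bullet^{*}$ in Theorem \ref{thm:tob2r-evectors-cocommutative}.ii then produces the eigenvector
\[
v_{T'}\;:=\;\sum_{i=0}^{j}\binom{j}{i}q_1^{i}q_3^{j-i}(\bullet^{*})^{i}\,T'^{*}\,(\bullet^{*})^{j-i}
\]
of eigenvalue $q_2^{n_0-j}$ in $\calh^{*}$, and Proposition \ref{prop:doobtransform-efns}.R converts it into a non-negative right eigenfunction $\f_{T'}(x):=v_{T'}(x)/\eta(x)$ of the chain.

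Since the dual Hopf algebra is cocommutative, Theorem \ref{thm:tob2r-evectors-cocommutative}.iv (dualised) provides a complete right eigenbasis. I expect $\fo_j$ to decompose as $\sum_{|T'|=n_0-j}\alpha_{T'}\f_{T'}$ with $\alpha_{T'}\geq 0$, plus corrections coming from primitives of degree strictly larger than $n_0-j$ (hence strictly smaller eigenvalues $q_2^{n_0-j'}$, $j'<j$) which appear with non-positive coefficients, reflecting ``over-counting corrections''. A direct expansion in the three-vertex-chain case yields exactly this structure, with one strictly-smaller-eigenvalue term entering with negative coefficient. Assuming this sign pattern, and using that each $\f_\nu$ is non-negative on reachable states, Proposition \ref{prop:eigenfunction-expectation} gives
\[
\Expect\{\fo_j(X_t)\}\;\leq\;q_2^{(n_0-j)t}\sum_{|T'|=n_0-j}\alpha_{T'}\,\f_{T'}(X_0).
\]

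The remaining step is a combinatorial identification. Using $(f_1\cdots f_k)(x)=(f_1\otimes\cdots\otimes f_k)(\Delta^{[k]}(x))$ and the colouring description of iterated Connes--Kreimer coproducts (colour each vertex of $X_0$ with a value in $\{1,\dots,j+1\}$, weakly increasing from leaves to root), $\f_{T'}(X_0)$ becomes a weighted count of colourings whose middle colour class is an embedded copy of $T'$. Organising this count by the vertex $u\in X_0$ which serves as the root of the embedded $T'$, the $a(u)-1$ strict ancestors of $u$ must all receive distinct ``high'' colours, and these can be placed amongst the $n_0$ total colour positions in at most $\binom{n_0}{a(u)-1}$ ways; the remaining factors repackage into $(q_1/(q_1+q_3))^{h(u)}$ and $(q_3/(q_1+q_3))^{a(u)-1}$, exactly the weights defining $\fo_j(X_0)$. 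This should produce the pointwise bound $\sum_{T'}\alpha_{T'}\f_{T'}(X_0)\leq\max_{u\in X_0,\,h(u)\geq n_0-j}\binom{n_0}{a(u)-1}\cdot\fo_j(X_0)$, closing the argument.

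The hard part will be the two ingredients assumed above: verifying that the smaller-eigenvalue corrections indeed appear with non-positive coefficients in the eigen-decomposition of $\fo_j$, and carrying out the colouring bookkeeping cleanly enough that the ambient factor comes out to be exactly $\binom{n_0}{a(u)-1}$ rather than something larger. Both are purely combinatorial statements about how iterated admissible cuts in the Connes--Kreimer algebra distribute among the vertices of $X_0$, but I anticipate they will require careful case analysis on whether sibling-branches of the ancestor chain of $u$ receive low versus high colours.
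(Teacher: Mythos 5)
Your route is genuinely different from the paper's, and the gap you flag is real and goes a little deeper than you may realize. The paper never eigen-decomposes $\fo_j$: it constructs a single genuine $q_2^{n_0-j}$-eigenfunction $\varphi_j:=\sum_{T'}\alpha_{T'}\f_{T'}$ (sum over all trees $T'$ of degree $n_0-j$, with explicit positive $\alpha_{T'}$), proves the \emph{pointwise} inequality $\fo_j(x)\leq\varphi_j(x)$ on all reachable $x$, and then concludes $\Expect(\fo_j(X_t))\leq\Expect(\varphi_j(X_t))=q_2^{(n_0-j)t}\varphi_j(X_0)$ immediately from Proposition \ref{prop:eigenfunction-expectation}, with no need to understand the full spectral decomposition. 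The pointwise inequality comes out of Lemma \ref{lem:coassociativity-cktrees} together with the observation that, once one re-indexes the sum by the root $u$ of the embedded copy of $T'$, dropping the constraint that the trunk $S$ contain the strict ancestors of $u$ only increases the weighted count because $\prod_{v\in A(u)\setminus\{u\}}h_x(v)/h_S(v)\geq 1$. The closing step $\varphi_j(X_0)\leq\max_u\binom{n_0}{a(u)-1}\cdot\fo_j(X_0)$ uses the matching upper bound $\prod_{v\in A(u)\setminus\{u\}}h_x(v)/h_S(v)\leq\binom{n_0}{a(u)-1}$, proved by bounding each $h_x(v_k)\leq n_0-(k-1)$ and $h_S(v_k)\geq a(u)-k$ along the ancestor chain; that part is close in spirit to your colouring sketch.

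The eigendecomposition approach needs two things you do not establish, and which I do not think follow readily. First, you must show $\fo_j$ has \emph{no} component on eigenvalues strictly larger than $q_2^{n_0-j}$, i.e.\ no contribution from primitives of degree less than $n_0-j$; you silently assume this but any positive such component would defeat the bound at large $t$. Second, you must show that every smaller-eigenvalue component of $\fo_j$ enters with a non-positive coefficient \emph{and} that the corresponding basis eigenfunction is pointwise non-negative on reachable states. Claim two is \emph{strictly stronger} than the pointwise inequality $\fo_j\leq\varphi_j$: one can have $\varphi_j-\fo_j\geq 0$ while the individual subleading Fourier coefficients of $\fo_j$ have mixed signs. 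A single three-vertex example is not evidence for the general sign pattern, and I do not see how to prove it without essentially re-deriving the pointwise bound. You should switch to the pointwise-domination argument; the key fact is the trunk-ratio inequality above, which drops out of the hook-length formula (\ref{eq:eta-cktree}) for $\eta$.
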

The proof requires yet more definitions: a \emph{trunk} of a forest
consists of rooted subtrees of its constituent trees. For example,
in Figure \ref{fig:trintoberexpectationpf}, $S$ is a trunk of $x$,
and $S'$ and $S'\cup\{w\}$ are both trunks of $x'$. $T'$ is not
a trunk of $x$, but rather a trunk of $x\backslash S$. Since we
will consider the hook of $v$ both within the full forest $x$ and
of a trunk $S$ (or of other subtrees), write $H_{x}(v)$ and $H_{S}(v)$
for these respectively, and similarly $h_{x}(v)$ and $h_{S}(v)$.
For example, in Figure \ref{fig:trintoberexpectationpf}, $h_{x}(v_{2})=10$
whilst $h_{S}(v_{2})=3$.

For the proof of this theorem, work in the non-decorated Connes-Kreimer
algebra, where the vertices are unlabelled. The following coassociativity
result will be useful:
\begin{lem}
\label{lem:coassociativity-cktrees} Suppose $x$ is a forest of degree
$n$, and $T'$ is a tree of degree $n-j$. Fix an integer $i\in[0,j]$.
Then a ratio of coproduct coefficients may be expressed as follows:
\[
\frac{\eta_{x}^{\overbrace{\bullet,\dots,\bullet}^{j-i},T',\overbrace{\bullet,\dots,\bullet}^{i}}}{\eta(x)}=\frac{1}{(n-j)!\binom{n}{i\ n-j\ j-i}}\sum_{S,T'}\left(\prod_{v\in S}\frac{h_{x}(v)}{h_{S}(v)}\right)\prod_{v\in T'}h_{x}(v),
\]
where the sum runs over all trunks $S$ of $x$ with degree $i$,
and all copies of $T'$ within $x$ that are trunks of $x\backslash S$.
In particular, taking $T'=\emptyset$ (so $n=j$) shows that 
\[
\binom{n}{i}=\sum_{S}\prod_{v\in S}\frac{h_{x}(v)}{h_{S}(v)},
\]
where the sum runs over all trunks $S$ of $x$ with degree $i$.\end{lem}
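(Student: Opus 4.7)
The plan is to compute $\eta_x^{\bullet^{j-i}, T', \bullet^i}$ combinatorially and then apply the hook length formula \eqref{eq:eta-cktree} to recognize the claimed expression. First I would decompose the iterated coproduct via coassociativity:
\[
\Delta_{1^{j-i}, n-j, 1^i} = (\Delta_{1^{j-i}} \otimes \id \otimes \Delta_{1^i}) \circ \Delta_{j-i, n-j, i}.
\]
The triple coproduct $\Delta_{j-i, n-j, i}(x)$ can be interpreted as a sum over nested pairs of trunks $S \subseteq R \subseteq x$ with $|S|=i$ and $|R|=n-j+i$, each pair contributing the single term $(x \setminus R) \otimes (R \setminus S) \otimes S$. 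Reparametrizing by $(S, T)$ with $T = R \setminus S$ matches the summation domain of the lemma: $S$ is a trunk of $x$ of degree $i$ and $T$ is a trunk of $x \setminus S$ of degree $n-j$ (using that $R = S \cup T$ is a trunk of $x$ precisely when $T$ is a trunk of $x \setminus S$). Since $\Delta_{1^k}$ sends a forest $F$ of degree $k$ to $\eta(F) \, \bullet^{\otimes k}$---iteratively peeling off singleton leaves counts increasing labellings---extracting the coefficient of $\bullet^{\otimes j-i} \otimes T' \otimes \bullet^{\otimes i}$ yields
\[
\eta_x^{\bullet^{j-i}, T', \bullet^i} = \sum_{(S, T)} \eta(S) \cdot \eta(x \setminus (S \cup T)).
\]

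Next I would substitute $\eta(F) = |F|!/\prod_{v \in F} h_F(v)$ into both sides and simplify. The only nontrivial hook identity required is
\[
h_x(v) = h_{x \setminus (S \cup T)}(v) \qquad \text{for every } v \in x \setminus (S \cup T),
\]
which holds because $S \cup T$ is a trunk of $x$, so its complement is an antitrunk, i.e.\ closed under taking descendants: the descendants of $v$ in $x$ already lie in $x \setminus (S \cup T)$, so the hook computed within the subforest agrees with the hook in $x$. This collapses the hook product over $x \setminus (S \cup T)$ appearing in the numerator against the same product in the denominator, leaving
\[
\frac{\eta_x^{\bullet^{j-i}, T', \bullet^i}}{\eta(x)} = \frac{(j-i)!\, i!}{n!} \sum_{(S, T)} \prod_{v \in S} \frac{h_x(v)}{h_S(v)} \prod_{v \in T} h_x(v).
\]
The prefactor is exactly $1/((n-j)!\binom{n}{i\,n-j\,j-i})$, and identifying $T$ with $T'$ via the chosen embedding gives the first displayed formula. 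The special case $T'=\emptyset$ (so $n=j$) follows immediately: then $\Delta_{1^{j-i},0,1^i}$ agrees with $\Delta_{1^n}$, so $\eta_x^{\bullet^{j-i},\bullet^i} = \eta(x)$, the left side equals $1$, and the right side reduces to $\binom{n}{i} = \sum_S \prod_{v\in S} h_x(v)/h_S(v)$.

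The main obstacle is the combinatorial Step 1: one must track carefully which tensorand of each $\Delta_{a,b}$ is the pruned forest and which is the trunk---the paper's convention $\Delta_{1,n-1}(x)=\sum_v \bullet\otimes(x\setminus v)$ places pruned on the left---and verify that the coefficient of each vertex-subset decomposition $(x\setminus R) \otimes (R \setminus S) \otimes S$ in the triple coproduct is exactly $1$. In particular, no extraneous factor of $\eta(T')$ may appear on the right side: the middle piece is not further broken into singletons by the descent operator, so only the outer blocks $S$ and $x\setminus(S\cup T)$ contribute symmetrization factors.
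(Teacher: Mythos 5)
Your proposal is correct and follows essentially the same route as the paper's own proof: decompose $\Delta_{1^{j-i},n-j,1^i}$ via coassociativity as $(\Delta_{1,\dots,1}\otimes\id\otimes\Delta_{1,\dots,1})\circ\Delta_{j-i,n-j,i}$, identify the terms of the triple coproduct with pairs (trunk $S$, trunk $T$ of $x\backslash S$), substitute the hook length formula for $\eta$, and cancel hooks using $h_{x}(v)=h_{x\backslash(S\cup T')}(v)$ for $v$ outside the trunk $S\cup T'$. The special case $T'=\emptyset$ is handled the same way as in the paper, via $\eta_{x}^{\bullet,\dots,\bullet,\emptyset,\bullet,\dots,\bullet}=\eta(x)$.
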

\begin{proof}
Recall that the numerator on the left hand side is the coefficient
of $\bullet^{\otimes j-i}\otimes T'\otimes\bullet^{\otimes i}$ in
$\Delta_{1,\dots,1,n-j,1,\dots,1}(x)$. By coassociativity, and then
the definition of $\eta$: 
\begin{align*}
\Delta_{1,\dots,1,n-j,1,\dots,1}(x) & =\left(\Delta_{1,\dots,1}\otimes\id\otimes\Delta_{1,\dots,1}\right)\circ\Delta_{j-i,n-j,i}(x)\\
 & =\left(\sum_{S,T}\eta(x\backslash(S\cup T))\eta(S)\right)\bullet^{\otimes j-i}\otimes T\otimes\bullet^{\otimes i},
\end{align*}
summing over all trunks $S$ of $x$ with degree $i$, and all trunks
$T$ of $x\backslash S$ with degree $n-j$. So it suffices to show
that, for a specific trunk $S$ of $x$ with degree $i$ and a copy
of $T'$ within $x$ that is a trunk of $x\backslash S$, 
\begin{equation}
\eta(x\backslash(S\cup T'))\eta(S)\frac{1}{\eta(x)}=\frac{1}{(n-j)!\binom{n}{i\ n-j\ j-i}}\left(\prod_{v\in S}\frac{h_{x}(v)}{h_{S}(v)}\right)\prod_{v\in T'}h_{x}(v).\label{eq:trinomialtreepf1}
\end{equation}

The key is Equation \ref{eq:eta-cktree}: 
\[
\eta(x)=\frac{\deg x!}{\prod_{v\in x}h_{x}(v)},
\]
so the left hand side of Equation \ref{eq:trinomialtreepf1} is 
\[
\frac{(j-i)!}{\prod_{v\in x\backslash(S\cup T')}h_{x\backslash(S\cup T')}(v)}\frac{i!}{\prod_{v\in S}h_{S}(v)}\frac{\prod_{v\in x}h_{x}(v)}{n!}.
\]
Observe that each $v\in x$ is exactly one set out of $S$, $T'$
and $x\backslash(S\cup T')$. Further, if $v\in x\backslash(S\cup T')$,
then all descendants of $v$ within $x$ are within $x\backslash(S\cup T')$,
so $h_{x}(v)=h_{x\backslash(S\cup T)}(v)$. This proves Equation \ref{eq:trinomialtreepf1}.

To see the ``in particular'' claim in the lemma, note that $\eta_{x}^{\overbrace{\bullet,\dots,\bullet}^{n-i},\emptyset,\overbrace{\bullet,\dots,\bullet}^{i}}=\eta_{x}^{\bullet,\dots,\bullet}=\eta(x)$,
by coassociativity.
\end{proof}

\begin{proof}[Proof of Theorem \ref{thm:trintoberexpectation-cktrees}]
 The first step is to calculate a right eigenfunction by applying
Theorem \ref{thm:tob2r-evectors-cocommutative}.ii to the dual algebra.
Let the kernel element $p$ be $T'^{*}$ where $T'$ is a tree, of
degree $j$. Then the associated eigenfunction is 
\begin{align}
\f_{T'}(x) & =\frac{1}{\eta(x)}\sum_{i=0}^{j}\binom{j}{i}q_{1}^{i}q_{3}{}^{j-i}\bullet^{*i}T'^{*}\bullet^{*j-i}\mbox{ evaluated on }x\nonumber \\
 & =\frac{1}{\eta(x)}\sum_{i=0}^{j}\binom{j}{i}q_{1}^{j-i}q_{3}{}^{i}\eta_{x}^{\overbrace{\bullet,\dots,\bullet}^{j-i},T',\overbrace{\bullet,\dots,\bullet}^{i}}\quad\mbox{(renaming }j-i\mbox{ as }i\mbox{)}\nonumber \\
 & =\sum_{i=0}^{j}\binom{j}{i}q_{1}^{j-i}q_{3}{}^{i}\frac{1}{(n_{0}-j)!\binom{n_{0}}{i\ n-j\ j-i}}\sum_{S,T'}\left(\prod_{v\in S}\frac{h_{x}(v)}{h_{S}(v)}\right)\prod_{v\in T'}h_{x}(v),\label{eq:trinomialtreepf2}
\end{align}
summing over all trunks $S$ of $x$ with degree $i$, and all copies
of $T'$ within $x$ that are trunks of $x\backslash S$.

We exchange the order of summation in $S$ and $T'$ - rather than
first choosing the trunk $S$, and then letting $T'$ be a trunk of
$x\backslash S$, we instead start by specifying the copy of $T'$
in $T$, then let $u$ denote its root, and let $S$ be a trunk of
$x\backslash T'$. The condition that $T'$ is a trunk of $x\backslash S$
translates to $S$ containing all strict ancestors of $u$. Note also
that $S$ cannot contain any vertices in $H(u)$. Thus the degree
$i$ of $S$ must range between $a(u)-1$ and $n-h_{x}(u)$.

It would be ideal to simplify Equation \ref{eq:trinomialtreepf2}
using the second statement of Lemma \ref{lem:coassociativity-cktrees}.
This requires removing the condition $S\supseteq A(u)\backslash\{u\}$.
To do so, define $\bar{x}:=x\backslash(A(u)\cup H(u))$, and $\bar{S}=S\backslash(A(u)\backslash\{u\})$;
see Figure \ref{fig:trintoberexpectationpf}. Then the trunks $S$
of $x$ that contain all strict ancestors of $u$ are in bijection
with the trunks $\bar{S}$ of $\bar{x}.$ Moreover, $h_{\bar{S}}(v)=h_{S}(v)$
for all $v\in\bar{S}$, and $h_{\bar{x}}(v)=h_{x}(v)$ if $v\in\bar{x}$.
And for $v\in S\backslash\bar{S}=A(u)\backslash\{u\}$, it is true
that $h_{x}(v)>h_{S}(v)$. So 
\begin{equation}
\prod_{v\in S}\frac{h_{x}(v)}{h_{S}(v)}=\left(\prod_{v\in\bar{S}}\frac{h_{\bar{x}}(v)}{h_{\bar{S}}(v)}\right)\left(\prod_{v\in A(u)\backslash\{u\}}\frac{h_{x}(v)}{h_{S}(v)}\right)\geq\left(\prod_{v\in\bar{S}}\frac{h_{\bar{x}}(v)}{h_{\bar{S}}(v)}\right).\label{eq:trinomialtreepf3}
\end{equation}
Now sum over all trunks $S$ of $x$ with degree $i$ and containing
all strict ancestors of $u$:
\[
\sum_{S}\prod_{v\in S}\frac{h_{x}(v)}{h_{S}(v)}\geq\sum_{\bar{S}}\left(\prod_{v\in\bar{S}}\frac{h_{\bar{x}}(v)}{h_{\bar{S}}(v)}\right)=\binom{\deg\bar{x}}{\deg\bar{S}}=\binom{n_{0}-a(u)-h_{x}(u)+1}{i-a(u)+1}
\]
using the second statement of Lemma \ref{lem:coassociativity-cktrees}.
Substitute into Equation \ref{eq:trinomialtreepf2}, keeping in mind
that $i=\deg S$ ranges between $a(u)-1$ and $n_{0}-h_{x}(u)$ (and
viewing $u$ as a function of $T'$): 
\begin{align*}
\f_{T'}(x) & \geq\sum_{T'}\prod_{v\in T'}h_{x}(v)\sum_{i=a(u)-1}^{n_{0}-h_{x}(u)}\binom{j}{i}q_{1}^{j-i}q_{3}{}^{i}\frac{1}{(n_{0}-j)!\binom{n_{0}}{i\ n-j\ j-i}}\binom{n_{0}-a(u)-h_{x}(u)+1}{i-a(u)+1}\\
 & =\sum_{T'}\prod_{v\in T'}h_{x}(v)\frac{1}{(n_{0}-j)!\binom{n_{0}}{j}}q_{3}^{a(u)-1}q_{1}^{-n_{0}+h_{x}(u)+j}\sum_{i=a(u)-1}^{n_{0}-h_{x}(u)}q_{1}^{n_{0}-h_{x}(u)-i}q_{3}{}^{i-a(u)+1}\binom{n_{0}-a(u)-h_{x}(u)+1}{i-a(u)+1}\\
 & =\sum_{T'}\prod_{v\in T'}h_{x}(v)\frac{1}{(n_{0}-j)!\binom{n_{0}}{j}}q_{3}^{a(u)-1}q_{1}^{-n_{0}+h_{x}(u)+j}(q_{1}+q_{3})^{n_{0}-a(u)-h_{x}(u)+1}.
\end{align*}

\begin{figure}
\begin{center} 
\setlength{\unitlength}{2mm}
\begin{picture} (15,24) (-3,-15)   
\put(5,7){\color{red}{\circle*{1}}}
\put(2.5,7){$v_1$}  
\thicklines
\put(5,7){\color{red}{\line(0, -1){4}} }
\put(5,3){\color{red}{\circle*{1}} }
\put(2.5,3.2){$v_2$} 
\thinlines
\put(5,3){\line(-6, -4){6}} 
\put(-1,-1){\circle*{1}} 
\put(-3,-0.5){$w$} 
\put(-1,-1){\line(0, -1){4}} 
\put(-1,-5){\circle*{1}}
\thicklines
\put(5,3){\color{red}{\line(0, -1){4}} }
\put(5,-1){\color{red}{\circle*{1}} }
\put(2.5,-1){$v_3$}  
\put(5,3){\color{red}{\line(3, -4){3}} }
\put(8,-1){\color{red}{\circle*{1}} }
\thinlines
\put(5,-1){\line(0, -1){4}} 
\put(5,-5){\color{blue}{\circle*{1}} }
\put(2.5,-5){$u$} 
\thicklines
\put(5,-5){\color{blue}{\line(-3, -4){3}} }
\put(2,-9){\color{blue}{\circle*{1}} }
\put(5,-5){\color{blue}{\line(0, -1){4}} }
\put(5,-9){\color{blue}{\circle*{1}} }
\thinlines
\put(5,-9){\line(0, -1){4}} 
\put(5,-13){\circle*{1}} 
\put(5,-5){\line(3, -4){3}} 
\put(8,-9){\circle*{1}} 
\put(6.5,1.5){\color{red}{{$S$}}}
\put(1,-7){\color{blue}{{$T'$}}}
\put(-3,-15){$ \underbrace{\hspace{30mm}}_x $}
\end{picture} 
\qquad \qquad \qquad
\begin{picture} (15,24) (-3,-15)   
\put(-1,-1){\circle*{1}} 
\put(-3,-0.5){$w$}
\put(-1,-1){\line(0, -1){4}} 
\put(-1,-5){\circle*{1}}
\put(8,-1){\color{red}{\circle*{1}} }
\put(6.5,0){\color{red}{{$S'$}}}
\put(-3,-15){$ \underbrace{\hspace{30mm}}_{x'} $}
\end{picture} 
\end{center}\protect\caption{\label{fig:trintoberexpectationpf} To illustrate the notation in
the proof of Theorem \ref{thm:trintoberexpectation-cktrees}: a tree
$x$ with one possibility of $S$ and $T'$, the corresponding $x'$
and $S'$, and the vertices $u$ and $v_{1},\dots,v_{a(u)-1}$.}
\end{figure}
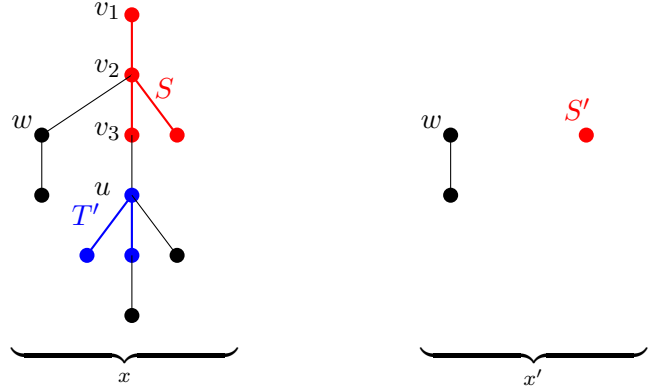

$\f_{T'}$ has eigenvalue $q_{2}^{n_{0}-j}$ whenever $\deg T'=n_{0}-j$.
Take the following linear combination of such $T'$, over all $T'$
with degree $n_{0}-j$ (as opposed to only over copies within $x$
of a fixed $T'$):
\begin{align*}
\sum_{T'}(n_{0}-j)!\binom{n_{0}}{j}\frac{q_{1}^{n_{0}-j}}{q_{1}+q_{3}{}^{n}}\frac{1}{\prod_{v\in T'}h_{T'}(v)}\f_{T'}(x) & \geq\sum_{T'}\prod_{v\in T'}\frac{h_{x}(v)}{h_{T'}(v)}q_{3}^{a(u)-1}q_{1}^{h_{x}(u)}(q_{1}+q_{3})^{n_{0}-a(u)-h_{x}(u)+1}\\
 & =\sum_{u\in x}\binom{h_{x}(u)}{n_{0}-j}\left(\frac{q_{3}}{q_{1}+q_{3}}\right)^{a(u)-1}\left(\frac{q_{1}}{q_{1}+q_{3}}\right)^{h_{x}(u)}=\fo_{j}(x)
\end{align*}
where obtaining the second line uses Equation \ref{eq:trinomialtreepf2},
for a sum over trunks $T'$ of $H_{x}(u)$ with degree $n_{0}-j$.

So, by Proposition \ref{prop:eigenfunction-expectation}, 
\begin{align*}
\Expect(\fo_{j}(X_{t})|X_{0}=x_{0}) & \leq\Expect(\fo_{j}(X_{t})|X_{0}=x_{0})\\
 & =q_{2}^{(n_{0}-j)t}\sum_{T'}(n_{0}-j)!\binom{n_{0}}{j}\frac{q_{1}^{n_{0}-j}}{q_{1}+q_{3}{}^{n}}\frac{1}{\prod_{v\in T'}h_{T'}(v)}\f_{T'}(x_{0})
\end{align*}
and it remains to upper-bound the right-hand side. We do so by finding
an upper bound $C$ for $\prod_{v\in A(u)\backslash\{u\}}\frac{h_{x}(v)}{h_{S}(v)}$;
then we have 
\[
\prod_{v\in S}\frac{h_{x}(v)}{h_{S}(v)}=\left(\prod_{v\in\bar{S}}\frac{h_{\bar{x}}(v)}{h_{\bar{S}}(v)}\right)\left(\prod_{v\in A(u)\backslash\{u\}}\frac{h_{x}(v)}{h_{S}(v)}\right)\geq C\left(\prod_{v\in\bar{S}}\frac{h_{\bar{x}}(v)}{h_{\bar{S}}(v)}\right)
\]
and using this in place of Equation \ref{eq:trinomialtreepf3} in
the first four paragraphs of this proof will show 
\[
q_{2}^{(n_{0}-j)t}\sum_{T'}(n_{0}-j)!\binom{n_{0}}{j}\frac{q_{1}^{n_{0}-j}}{q_{1}+q_{3}{}^{n}}\frac{1}{\prod_{v\in T'}h_{T'}(v)}\f_{T'}(x_{0})\leq q_{2}^{(n_{0}-j)t}C\fo_{j}(X_{0}).
\]

To obtain the upper bound $C$, let $v_{1},v_{2},\dots v_{a(u)-1}$
be the vertices of $A(u)\backslash\{u\}$, such that $v_{1}$ is a
root, $v_{2}$ is a child of $v_{1}$, $v_{3}$ is a child of $v_{2}$,
... . So $\Anc(v_{k})=\{v_{1},\dots,v_{k}\}$, and, except $v_{k}$
itself, these are not in the hook of $v_{i}$ - hence $h_{x}(v_{k})\leq n_{0}-(k-1)$.
For the denominator, note that $H_{S}(v_{k})\supseteq v_{k},v_{k+1},\dots v_{a(u)-1}$,
so $h_{S}(v_{i})\geq a(u)-k$. Thus 
\[
\prod_{v\in A(u)\backslash\{u\}}\frac{h_{x}(v)}{h_{S}(v)}\leq\frac{n_{0}(n_{0}-1)\dots(n_{0}-a(u)+2)}{(a(u)-1)(a(u)-2)\dots1}=\binom{n_{0}}{a(u)-1}\leq\max_{u\in X_{0}:h(u)\geq n_{0}-j}\left\{ \binom{n_{0}}{a(u)-1}\right\} ,
\]
which proves the theorem.
\end{proof}

\section{Relative time on a to-do list\label{sec:fqsym}}

This section applies the Hopf-algebraic framework of Sections \ref{sec:Markov-Chains-from-descent-operators}-\ref{sec:partsof1}
to the Malvenuto-Reutenauer Hopf algebra to analyse variations of
the ``to-do list'' chain of Example \ref{ex:fqsym}, also known
as \emph{top-to-random-with-standardisation}. As shown in \cite{hopfchainlift},
the significance of this family of chains is two-fold: first, the
distribution after $t$ steps is equal to that of a card shuffle if
started from the identity permutation. So any result below phrased
solely in terms of the distribution at time $t$ (and starting at
the identity) also applies to the corresponding shuffle; see Theorem
\ref{thm:recursivelumping-shuffle} and Corollary \ref{cor:probestimate-fqsym}.
Second, information about these chains can be used to analyse descent
operator Markov chains on the numerous subquotients of the Malvenuto-Reutenauer
Hopf algebra, which are all lumpings of the present chains. These
lumped chains include the phylogenetic tree example from the introduction
(Loday-Ronco algebra of binary trees), an unbump-and-reinsert chain
on tableaux (Poirier-Reutenauer algebra) and a remove-and-readd-a-box
chain on partitions (symmetric functions with Schur basis) \cite[Sec. 3.2-3.3]{hopfchainlift}.

\subsection{Markov chains from the Malvenuto-Reutenauer Hopf algebra}

Following \cite{sym6}, let $\fqsym$ (free quasisymmetric functions)
denote the Malvenuto-Reutenauer Hopf algebra of permutations \cite[Sec. 3]{fqsym}
(the other common notation is $\mathfrak{S}Sym$ \cite{fqsymstructure}).
Its basis in degree $n$ consists of the permutations in $\sn$, written
in one-line notation: $\sigma=(\sigma_{1}\dots\sigma_{n})$. The $\sigma_{i}$
are called letters, in line with the terminology of words.

We work in the fundamental basis of $\fqsym$. The product $\sigma\tau$
of two permutations is computed as follows: first, add to each letter
in $\tau$ the degree of $\sigma$, then sum over all interleavings
of the result with $\sigma$. For example: 
\begin{align*}
(312)(21) & =(312)\shuffle(54)\\
 & =(31254)+(31524)+(31542)+(35124)+(35142)\\
 & \hphantom{=}+(35412)+(53124)+(53142)+(53412)+(54312).
\end{align*}
The coproduct is ``deconcatenate and standardise'': 
\[
\Delta(\sigma_{1}\dots\sigma_{n})=\sum_{i=0}^{n}\std(\sigma_{1}\dots\sigma_{i})\otimes\std(\sigma_{i+1}\dots\sigma_{n}),
\]
where the standardisation map $\std$ converts an arbitrary string
of distinct letters into a permutation by preserving the relative
order of the letters. For example:
\begin{align*}
 & \hphantom{=}\Delta(4132)\\
 & =()\otimes(4132)+\std(4)\otimes\std(132)+\std(41)\otimes\std(32)+\std(413)\otimes\std(2)+(4132)\otimes()\\
 & =()\otimes(4132)+(1)\otimes(132)+(21)\otimes(21)+(312)\otimes(1)+(4132)\otimes().
\end{align*}
Since, for every permutation $\sigma\in\sn$, its coproduct $\Delta_{1,n-1}(\sigma)$
is of the form $(1)\otimes\tau$ for some permutation $\tau\in\mathfrak{S}_{n}$,
we see inductively that the rescaling function is $\eta(\sigma)\equiv1$,
i.e. no rescaling is required. Thus, by Theorem \ref{thm:cppchain},
each timestep of the top-$r$-to-random-with-standardisation chains,
driven by $\trer_{n}=\frac{1}{n(n-1)\dots(n-r+1)}m\Delta_{1^{r},n-r}$,
has the following three-part description:
\begin{enumerate}
\item Remove the first $r$ letters of $\sigma$.
\item Replace the letters of the result by $r+1,\dots,n$ such that they
have the same relative order as before.
\item Insert the letters $1,2,\dots,r$ in a uniformly chosen position.
\end{enumerate}
Figure \ref{fig:todoexample-twotask} shows a possible trajectory
for $r=2$, $n=5$.

\begin{figure}
\begin{center}
\begin{tikzpicture} 
\node (A) at (0,0) {$(23541)$}; 
\node (B) at (4,0)  {$(15423)$}; 
\node (C) at (8,0)  {$(52341)$};  
\node (D) at (12,0) {$(42153)$}; 
\node (E) at (1,-2) {$(541)$}; 
\node (EE) at (3,-2) {$(543)$}; 
\node (F) at (5,-2)  {$(423)$}; 
\node (FF) at (7,-2)  {$(534)$}; 
\node (G) at (9,-2)  {$(341)$}; 
\node (GG) at (11,-2)  {$(453)$}; 
\draw[->] (A) -- (E); 
\draw[->] (E) -- (EE);
\draw[->] (EE) -- (B); 
\draw[->] (B) -- (F); 
\draw[->] (F) -- (FF);
\draw[->] (FF) -- (C); 
\draw[->] (C) -- (G); 
\draw[->] (G) -- (GG); 
\draw[->] (GG) -- (D); 
\end{tikzpicture}
\par\end{center}

\protect\caption{\label{fig:todoexample-twotask} A possible three-day trajectory of
the Markov chain of ``relative time on a to-do list'', for $n=5$
and $r=2$.}
\end{figure}
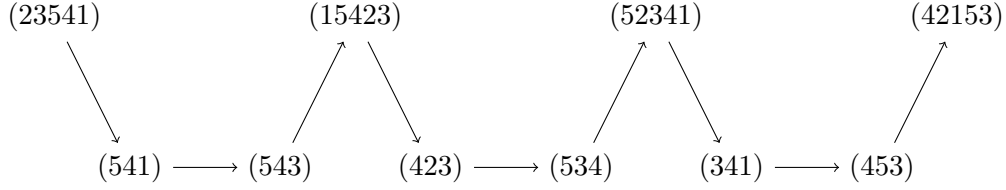

We give three interpretations of this chain. First, there is the to-do
list formulation: every day you complete $r$ tasks at the top of
a to-do list comprising $n$ tasks, then add $r$ new tasks independently,
each at a uniformly chosen position. You encode each daily to-do list
as a permutation, writing 1 for the latest addition to the list, 2
indicates the next newest addition excluding task 1, and so on, so
that $n$ is the task spending the longest time on the list. See Example
\ref{ex:fqsym}. In other words, you record only the relative times
that the tasks have spent on the list. In addition to fixed values
of $r$, we can let $r$ be a binomial variable with parameter $q_{2}$.
This corresponds to the descent operator $\binter$, and models $n$
managers independently handing you a task each with probability $q_{2}$.
The variable work speed (that each day the number of completed tasks
equals this vairable number of incoming tasks) can be explained by
procrastination when the workload is low, and a panic to overwork
when workload is high, to avoid a growing to-do list.

The second formulation is in terms of card-shuffling - take $r$ cards
from the top of an $n$-card deck, and reinsert them independently
each at a uniformly chosen position. Such shuffles were studied in
\cite{cppriffleshuffle}, but here we make one crucial modification:
instead of recording the values of the cards in each position as they
are shuffled, we record the relative last times that each card was
touched, 1 being the most recently moved card. So the number assigned
to each card changes at each shuffle - more specifically, if the top,
removed, card is labelled $i$, then we change its label to 1 during
its reinsertion, and the cards previously labelled $1,2,\dots,i-1$
now increase their labels each by 1, becoming $2,3,\dots,i$ respectively. 

The third formulation models a player's hand during a card game. At
each turn, the player plays the $r$ leftmost cards in his hand (thus
removing them from the hand), then draws $r$ new cards to maintain
a hand of $n$ cards. The newly-drawn cards are inserted into the
hand in uniformly chosen positions, depending on when the player plans
to play them (cards to be played sooner are placed towards the left).
Again, the chain tracks the relative times that the cards have spent
in the player's hand.

This chain has a unique stationary distribution, namely the uniform
distribution $\pi(\sigma)\equiv\frac{1}{n!}$. Indeed, since $\bullet=(1)$
is the unique permutation of 1, the comment after Theorem \ref{thm:stationarydistribution}
applies: the stationary distribution is given by $\pi(\sigma)=\frac{1}{n!}\eta(\sigma)\times$coefficient
of $\sigma$ in $\bullet^{n}$. And the required coefficient is 1
as there is a unique way of inserting the letters $1,2,\dots,n$ in
that order to obtain a given permutation.

\subsection{Relationship to card-shuffles}

For the readers' convenience, we reproduce below a theorem and proof
from \cite{hopfchainlift}, which allows results for the to-do list
chain to apply to cut-and interleave card-shuffles (as described after
Lemma \ref{lem:eta}).
\begin{thm}[Equidistribution under descent-operator chains on $\fqsym$ and $\calsh$]
 \cite[Th. 3.14]{hopfchainlift} \label{thm:multistepprobofshuffles}
The distribution on permutations after $t$ iterates of the $m\Delta_{P}$-chain
on $\fqsym$ (to-do list chain) is the same as that after $t$ iterates
of the $m\Delta_{P}$-chain on the shuffle algebra $\calsh$ (card-shuffling),
if both are started from the identity permutation.\end{thm}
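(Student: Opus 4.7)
The plan is to exploit the commutativity of $\calsh$ to reduce $(m\Delta_P)^t$ to a single descent operator, and then transfer the resulting identity to $\fqsym$ via a direct comparison on the identity element $I_n := (1,2,\ldots,n)$.

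Since $\calsh$ is commutative, Proposition \ref{prop:descentoperator-composition}(i) iterates to yield $(m\Delta_P)^t = m\Delta_{R_t}$ as operators on $\calsh$, where $R_t := P\iprod P\iprod\cdots\iprod P$ is an explicit distribution on weak-compositions of $n$. Hence the $t$-step distribution of the $\calsh$ chain from $\llbracket 1,2,\ldots,n\rrbracket$ is read off directly from the coefficients of $m\Delta_{R_t}(\llbracket 1,2,\ldots,n\rrbracket)$, which is a weighted sum of shuffles of the consecutive intervals $[1,d_1],[d_1{+}1,d_1{+}d_2],\ldots$.

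The key bridge is an ``identification at the identity'': for every weak-composition $D$, the elements $m\Delta_D(I_n) \in \fqsym$ and $m\Delta_D(\llbracket I_n\rrbracket) \in \calsh$ represent the same formal sum of permutations under the natural identification $\sigma \leftrightarrow \llbracket \sigma \rrbracket$. Indeed, the deconcatenate-and-standardise coproduct gives $\Delta_D(I_n) = I_{d_1}\otimes I_{d_2}\otimes \cdots \otimes I_{d_l}$ in $\fqsym$ (standardising consecutive increasing integers is trivial), and the shift-and-shuffle multiplication of these smaller identities produces exactly the same set of interleavings of the disjoint intervals $[1,d_1],[d_1{+}1,d_1{+}d_2],\ldots$ that appear in $m\Delta_D(\llbracket I_n\rrbracket) \in \calsh$. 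Extending by linearity, $m\Delta_{R_t}(I_n)$ and $m\Delta_{R_t}(\llbracket I_n\rrbracket)$ agree as formal sums.

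The theorem therefore reduces to the single identity $(m\Delta_P)^t(I_n) = m\Delta_{R_t}(I_n)$ in $\fqsym$, and this is the main obstacle of the proof. Because $\fqsym$ is neither commutative nor cocommutative, Proposition \ref{prop:descentoperator-composition} does not apply directly; in fact, one can check by hand that the pointwise composition identity $m\Delta_D\circ m\Delta_{D'}(I_n) = \sum_M m\Delta_{D''(M)}(I_n)$ already fails in $\fqsym$ for many individual $D,D'$. Thus the failures must cancel only after the symmetric sum over all $t$-tuples of compositions with weights from $P$ is taken. The cleanest route to establish this cancellation is to factor through an injective Hopf morphism $\fqsym \hookrightarrow \mathcal{A}$ into a cocommutative Hopf algebra $\mathcal{A}$ (for example, a completion of the free associative algebra $\calsh^*$ via the fundamental map $\sigma \mapsto \sum_{\std(w)=\sigma}\llbracket w\rrbracket$), on which Proposition \ref{prop:descentoperator-composition}(ii) yields $(m\Delta_P)^t = m\Delta_{R_t}$ as operators, and then pull back using injectivity. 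The principal technical delicacy is handling the completion rigorously, since the fundamental embedding involves infinite sums over an unbounded alphabet; alternatively, one can carry out an induction on $t$ that exploits the fact that every coproduct summand of $I_n$ is again a product of smaller identity elements, avoiding completions at the cost of more elaborate bookkeeping.
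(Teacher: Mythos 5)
Your reduction on $\calsh$ and your ``identification at the identity'' for a single descent operator are fine, and you have correctly located the crux: one must control $(m\Delta_{P})^{t}$ applied to $I_{n}$ inside $\fqsym$. But your main mechanism for doing this cannot work: there is no injective Hopf morphism $\fqsym\hookrightarrow\mathcal{A}$ with $\mathcal{A}$ cocommutative, since if $\iota$ is an injective coalgebra map into a cocommutative coalgebra then $(\iota\otimes\iota)\Delta=\tau\Delta_{\mathcal{A}}\iota=(\iota\otimes\iota)\tau\Delta$ forces $\Delta=\tau\Delta$, i.e.\ $\fqsym$ itself would be cocommutative, which it is not. (The usual polynomial realisation $\sigma\mapsto\sum_{\std(w)=\sigma}\llbracket w\rrbracket$ is not a morphism into the free associative algebra with its deshuffle coproduct, precisely for this reason.) Moreover, your diagnosis that the pointwise identity $m\Delta_{D}\circ m\Delta_{D'}(I_{n})=\sum_{M}m\Delta_{D''(M)}(I_{n})$ ``fails for many $D,D'$'' and that a cancellation over $t$-tuples is needed is incorrect, provided one uses the cocommutative reading of $D''(M)$ in Proposition \ref{prop:descentoperator-composition} (with the commutative reading it does fail, e.g.\ for $D=(2,1)$, $D'=(1,2)$, $n=3$, but that is not the relevant identity).

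The fact you need is much simpler and is exactly what your dismissed fallback gestures at: since $\Delta(1\cdots n)=\sum_{r=0}^{n}(1\cdots r)\otimes(1\cdots(n-r))$, the subalgebra of $\fqsym$ generated by the identity permutations of all lengths is a graded sub-Hopf-algebra, it is \emph{cocommutative}, and it is stable under all descent operators applied along the trajectory from $I_{n}$ (each coproduct factor of $I_{n}$ is again an identity, and products of identities stay in the subalgebra). Applying Proposition \ref{prop:descentoperator-composition}.ii inside this subalgebra gives, on the nose, $(m\Delta_{P})^{t}(I_{n})=\cppmap(S^{P}\iprod\cdots\iprod S^{P})(I_{n})$ with $t$ factors; the only subtlety is that cocommutativity produces the internal product in the opposite order from the commutative case on $\calsh$, which is immaterial here because all factors equal $S^{P}$. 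Combining this with your $t=1$ identification (extended linearly to the descent operator of $S^{P}\iprod\cdots\iprod S^{P}$) finishes the proof without any completion or bookkeeping-heavy induction; this is the argument the paper gives.
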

\begin{proof}
First consider the case $t=1$. In $\fqsym$, for any weak-composition
$D$, 
\begin{align*}
m\Delta_{D}(1\dots n) & =m\left(\std(12\dots d_{1})\otimes\std((d_{1}+1)\dots(d_{1}+d_{2}))\otimes\dots\otimes\std((d_{1}+\dots+d_{l(D)-1}+1)\dots n)\right)\\
 & =m\left(12\dots d_{1}\otimes12\dots d_{2}\otimes\dots\otimes12\dots d_{l(D)}\right)\\
 & =1\dots d_{1}\shuffle(d_{1}+1)\dots(d_{1}+d_{2})\shuffle\dots\shuffle(d_{1}+\dots+d_{l(D)-1}+1)\dots n,
\end{align*}
and $m\Delta_{D}$ calculated in $\calsh$ gives the same result (under
the identification of $\sigma$ with $\llbracket\sigma\rrbracket$).
By linearity, $m\Delta_{P}(1\dots n)=m\Delta_{P}(\llbracket1\dots n\rrbracket)$
for all distributions $P$. This proves the equidistribution in the
case $t=1$.

The key to showing equidistribution for larger $t$ is to express
$t$ iterates of $m\Delta_{P}$, in either $\fqsym$ or $\calsh$,
as a single application of $m\Delta_{P''}$ for the same distribution
$P''$. This uses the identification of the descent operator $m\Delta_{D}$
with the homogeneous noncommutative symmetric function $S^{D}$, see
Section \ref{sub:Descent-operators-background}. Let $S^{P}=\sum_{D}\frac{P(D)}{\binom{n}{D}}S^{D}$
be the noncommutative symmetric function associated to $m\Delta_{P}$.

On a commutative Hopf algebra, such as $\calsh$, the composition
$\left(m\Delta_{P}\right)\circ\left(m\Delta_{P'}\right)$ corresponds
to the internal product of noncommutative symmetric functions $S^{P'}\cdot S^{P}$
(see Proposition \ref{prop:descentoperator-composition}). So $t$
iterates of $m\Delta_{P}$ on $\calsh$ correspond to $S^{P}\cdot S^{P}\cdot\dots\cdot S^{P}$,
with $t$ factors.

Now consider $m\Delta_{P}$ on $\fqsym$. Note that the coproduct
of the identity permutation is 
\begin{equation}
\Delta(1\cdots n)=\sum_{r=0}^{n}(1\cdots r)\otimes(1\dots n-r),\label{eq:fqsymidcoprod}
\end{equation}
and each tensor-factor is an identity permutation of shorter length.
Thus the subalgebra of $\fqsym$ generated by identity permutations
of varying length is closed under coproduct, and repeated applications
of $m\Delta_{P}$, starting at the identity, stays within this sub-Hopf-algebra.
Equation \ref{eq:fqsymidcoprod} shows that this sub-Hopf-algebra
is cocommutative, so by Proposition \ref{prop:descentoperator-composition},
the composition $\left(m\Delta_{P}\right)\circ\left(m\Delta_{P'}\right)$
corresponds to the internal product $S^{P}\cdot S^{P'}$, in the opposite
order from for $\calsh$. However, we only concern the case $P=P'$
where the order is immaterial: $t$ iterates of the $m\Delta_{P}$-chain
on $\fqsym$ are also driven by $S^{P}\cdot S^{P}\cdot\dots\cdot S^{P}$,
with $t$ factors.
\end{proof}

\subsection{Recursive lumping}

Since $\Delta_{1,n-1}$ sends a permutation to $\bullet\otimes$another
permutation, the recursive lumping of Theorem \ref{thm:recursivelumping}
applies. Note that the lumping map $D^{n-k}:\calbn\rightarrow\calb_{k}$
is simply the observation of the bottom $k$ items on the to-do list,
or the relative last-moved times of the bottom $k$ cards of the deck.
Hence we have
\begin{thm}[Recursive lumping for to-do list chains]
 \label{thm:recursivelumping-fqsym}Observing the last $k$ items
under the top-to-random-with-standardisation chain on $n$ items gives
a $\frac{n-k}{n}$-lazy version of the top-to-random-with-standardisation
chain on $k$ items. Observing the last $k$ items under the binomial-top-to-random-with-standardisation
chain on $n$ items gives the binomial-top-to-random-with-standardisation
chain on $k$ items, with the same parameter $q_{2}$.\qed
\end{thm}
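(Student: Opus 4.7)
The plan is to derive Theorem~\ref{thm:recursivelumping-fqsym} as a direct instance of Theorem~\ref{thm:recursivelumping} applied to $\fqsym$ with its fundamental basis. First I would verify the structural hypotheses needed to invoke that theorem: (a) $\fqsym$ is graded connected, (b) the degree-one part $\calb_1=\{(1)\}$ is one-dimensional, so the role of $\bullet$ is played by the permutation $(1)$, and (c) for any $\sigma=(\sigma_1\dots\sigma_n)\in\sn$, the refined coproduct is
\[
\Delta_{1,n-1}(\sigma)=\std(\sigma_1)\otimes\std(\sigma_2\cdots\sigma_n)=(1)\otimes\std(\sigma_2\cdots\sigma_n),
\]
so $\Delta_{1,n-1}(\sigma)=\bullet\otimes D(\sigma)$ with $D(\sigma):=\std(\sigma_2\cdots\sigma_n)\in\calb_{n-1}$. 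This is the one hypothesis that needs genuine verification, but it is immediate from the ``deconcatenate and standardise'' formula for the coproduct on $\fqsym$.

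Next I would identify the iterated map $D^{n-k}$ with the ``observe the last $k$ items'' map used to formulate the theorem. By coassociativity, $\Delta_{1^{n-k},k}(\sigma)=\bullet^{\otimes(n-k)}\otimes D^{n-k}(\sigma)$, and unpacking the iterated standardisation gives
\[
D^{n-k}(\sigma)=\std(\sigma_{n-k+1}\cdots\sigma_n).
\]
This is exactly the map that, under the to-do list interpretation, records the relative order of the bottom $k$ tasks (equivalently, the relative last-moved times of the bottom $k$ cards). So observing the last $k$ items is precisely the lumping $D^{n-k}$.

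With these identifications in place, the two claims are just the two parts of Theorem~\ref{thm:recursivelumping}: part (i) says the chain driven by $\ter_n$ on $\calbn$ lumps via $D^{n-k}$ to the $\tfrac{n-k}{n}$-lazy version of the $\ter_k$ chain on $\calb_k$, yielding the first assertion; part (ii) says the chain driven by $\binter_n(q_2)$ lumps via $D^{n-k}$ to the $\binter_k(q_2)$ chain, yielding the second assertion with the same parameter $q_2$. I expect no genuine obstacle here; the only subtlety is the bookkeeping to confirm that $D^{n-k}$ really corresponds to ``observing the last $k$ items'' in both the to-do list and card-shuffle formulations, which is a direct unwinding of the standardisation operation on a suffix.
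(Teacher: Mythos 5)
Your proposal is correct and follows essentially the same route as the paper: the paper likewise observes that $\Delta_{1,n-1}(\sigma)=\bullet\otimes\std(\sigma_2\cdots\sigma_n)$ puts $\fqsym$ within the hypotheses of Theorem~\ref{thm:recursivelumping}, identifies $D^{n-k}$ with observing the bottom $k$ items, and then reads off both assertions from parts (i) and (ii) of that theorem. Your explicit verification that $D^{n-k}(\sigma)=\std(\sigma_{n-k+1}\cdots\sigma_n)$ is exactly the bookkeeping the paper leaves implicit.
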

It is possible to see these two lumpings via an elementary argument.
For each step of the top-to-random-with-standardisation chain on $n$
cards, one of these two scenarios occur:
\begin{itemize}
\item With probability $\frac{n-k}{n}$, the removed top card is reinserted
in one of positions $1,2,\dots,n-k$ (counting from the top). When
observing only the bottommost $k$ cards, we see no change.
\item With the complementary probability of $\frac{k}{n}$, the removed
top card is reinserted in one of positions $n-k+1,n-k+2,\dots n$,
chosen uniformly. The insertion pushes the $n-k+1$th card into the
$n-k$th position, so when observing only the bottommost $k$ cards,
the top card amongst these $k$ appears removed. Although the inserted
card is not the ``top'' card of this apparent removal, it is nevertheless
the most recently touched card, whether we are observing the entire
deck or just the bottommost $k$ cards. And the chain tracks only
the relative times that cards are last touched, so this difference
in card is invisible.
\end{itemize}
As for binomial-top-to-random: view each step of the chain in terms
of reinserted positions as follows:
\begin{enumerate}[label=\arabic*.]
\item Select each position out of $\{1,2,\dots,n\}$ independently with
probability $(1-q_{2})$. Let $r$ denote the number of selected positions.
\item Select uniformly a permutation $(\tau_{1}\dots\tau_{r})\in\mathfrak{S}_{r}$.
\item Remove the top $r$ cards and reinsert into the positions chosen in
1., in the relative order chosen in 2. (i.e. the card previously in
position $\tau_{1}$ is reinserted to the topmost selected position,
the card previously in position $\tau_{2}$ is reinserted to the second-topmost
selected position, ...). 
\end{enumerate}
When observing only the bottom $k$ positions, we see that they are
each independently selected as in 1.. The relative order of reinserted
cards is that of $\tau_{r'+1}\tau_{r'+2}\dots\tau_{r}$ for some $r'$,
and this is uniform on $\mathfrak{S}_{r-r'}$ when $\tau$ is uniform.
So the change in the bottom $k$ positions follows exactly steps 1.,
2., 3. above, with $k$ in place of $n$. (As discussed above for
the top-to-random shuffle, the cards that are pushed above and outside
the observation area are not the cards that are reinserted into the
observation area, but since they are the last-touched cards, the chain
does not notice this difference.)

In view of Theorem \ref{thm:multistepprobofshuffles}, this recursive-lumping
theorem can be restated in terms of top-to-random shuffles (although
the relationship is no longer a lumping):
\begin{thm}[Distribution of bottommost cards under top-to-random shuffles]
 \label{thm:recursivelumping-shuffle} After $t$ top-to-random shuffles
(resp. binomial-top-to-random shuffles) of a deck of $n$ cards, the
probability distribution on $\mathfrak{S}_{k}$ given by standardising
the values of the bottommost $k$ cards is equal to the distribution
on a deck of $k$ cards after $t$ steps of an $\frac{n-k}{n}$-lazy
version of the top-to-random shuffle (resp. after $t$ steps of a
binomial-top-to-random shuffle, with the same parameter $q_{2}$).\qed
\end{thm}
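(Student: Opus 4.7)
The plan is to chain together the two preceding theorems (Theorem \ref{thm:multistepprobofshuffles} on equidistribution and Theorem \ref{thm:recursivelumping-fqsym} on recursive lumping) by passing to $\fqsym$ and back. Throughout, the $n$-card deck is started at the identity arrangement $\llbracket 1\cdots n\rrbracket$, since this is what Theorem \ref{thm:multistepprobofshuffles} requires and since shuffling is label-equivariant.

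First I would invoke Theorem \ref{thm:multistepprobofshuffles} to transfer the question from $\calsh$ to $\fqsym$: the distribution on $\sn$ after $t$ top-to-random shuffles from $\llbracket 1\cdots n\rrbracket$ equals that of the to-do list chain on $\fqsym$ after $t$ steps, started from $(1,2,\dots,n)$. Next I would observe that the operation ``standardise the bottommost $k$ cards'' is precisely the lumping map $D^{n-k}:\sn\to\mathfrak{S}_k$ of Theorem \ref{thm:recursivelumping-fqsym}; pushing forward both distributions under $D^{n-k}$ preserves equality, and on the $\fqsym$ side Theorem \ref{thm:recursivelumping-fqsym} identifies the pushforward as the distribution after $t$ steps of the $\frac{n-k}{n}$-lazy to-do list chain on $k$ items, started from $D^{n-k}(1,2,\dots,n)=(1,2,\dots,k)$.

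Finally, I would note that this lazy chain is itself a descent-operator chain on $\mathfrak{S}_k$, namely the one driven by $m\Delta_P$ for the distribution $P$ with $P((k))=\frac{n-k}{n}$ and $P((1,k-1))=\frac{k}{n}$ (so that $m\Delta_P=\frac{n-k}{n}\id+\frac{1}{n}m\Delta_{1,k-1}=\frac{n-k}{n}\id+\frac{k}{n}\ter_k$, matching Definition \ref{def:lazychain}). A second application of Theorem \ref{thm:multistepprobofshuffles}, now in the opposite direction and on the smaller algebra, shows that after $t$ steps this $\fqsym$-chain from the identity is equidistributed with the $\calsh$-chain for the same $P$ started from $\llbracket 1\cdots k\rrbracket$, which is exactly the $\frac{n-k}{n}$-lazy top-to-random shuffle on $k$ cards. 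The binomial case is identical, using the second statement of Theorem \ref{thm:recursivelumping-fqsym}: no laziness appears, and the relevant $P$ is the binomial-top-to-random distribution with the same parameter $q_2$.

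The argument is essentially a diagram chase between the two Hopf algebras, and no step is genuinely hard; the only place requiring care is the verification that $D^{n-k}$ sends the identity to the identity (immediate from $D(\sigma_1\dots\sigma_n)=\std(\sigma_2\dots\sigma_n)$) and that the lazy operator on $\calsh$ genuinely models a lazy shuffle (immediate since $m\Delta_{(k)}=\id$ on $\calh_k$ and $\ter_k$ on $\calsh$ is the top-to-random shuffle).
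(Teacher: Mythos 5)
Your argument is correct and is essentially the paper's own (unwritten) proof: the theorem is stated as an immediate consequence of combining Theorem \ref{thm:multistepprobofshuffles} with Theorem \ref{thm:recursivelumping-fqsym}, started from the identity, which is exactly the diagram chase you perform. Your only elaboration is to make explicit the second application of Theorem \ref{thm:multistepprobofshuffles} in degree $k$, after identifying the lazy chain as the $m\Delta_{P}$-chain with $P((k))=\frac{n-k}{n}$ and $P((1,k-1))=\frac{k}{n}$ (via Lemma \ref{lem:dooblazy} and $\id=m\Delta_{(k)}$), a detail the paper leaves implicit.
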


\subsection{Eigenvalues and eigenvectors}

An easy application of Theorem \ref{thm:tob2r-evectors-cocommutative}.i
shows that
\begin{prop}
The eigenvalues of the top-to-random-with-standardisation and binomial-top-to-random-with-standardisation
chains are $\beta_{j}=\frac{j}{n}$ and $\beta_{j}=q_{2}^{n-j}$ respectively
($j\in[0,n-2]\cup\{n\}$), and the multiplicity of $\beta_{j}$ is
$\dim\calh_{n-j}-\dim\calh_{n-j-1}=(n-j)!-(n-j-1)!$.
\end{prop}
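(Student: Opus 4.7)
The plan is to directly apply Theorem \ref{thm:tob2r-evectors-cocommutative}.i to $\fqsym$. This reduces the proposition to a routine generating function calculation, as the relevant parts of the Hopf-algebraic data are transparent: $\fqsym$ has the single permutation $(1)$ as its sole element in degree 1, so $\dim\calh_{1}=1$, and the Hilbert series is $\sum_{n\geq0}\dim\calhn\, x^{n}=\sum_{n\geq0}n!\, x^{n}$.

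Substituting into the general multiplicity formula from Theorem \ref{thm:tob2r-evectors-cocommutative}.i, the multiplicity of $\beta_{j}$ is the coefficient of $x^{n-j}y^{j}$ in
\[
\left(\frac{1-x}{1-y}\right)^{1}\sum_{n\geq0}n!\, x^{n}=(1-x)\left(\sum_{k\geq0}y^{k}\right)\left(\sum_{n\geq0}n!\, x^{n}\right).
\]
Since $\frac{1}{1-y}$ contributes $y^{j}$ with coefficient 1, I would extract the coefficient of $x^{n-j}$ in $(1-x)\sum_{n\geq0}n!\, x^{n}$, which is $(n-j)!-(n-j-1)!$ when $n-j\geq 1$, and which is $0!=1$ when $j=n$. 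This is precisely the claimed multiplicity, recovering in particular the value $1$ at $j=n$ and the value $2!-1!=1$ at $j=n-2$, with the intermediate cases $j=n-1$ omitted because $\beta_{n-1}$ does not appear in the allowed index set.

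As a consistency check, I would telescope $\sum_{j=0}^{n-2}\bigl((n-j)!-(n-j-1)!\bigr)+1=(n!-1)+1=n!=\dim\calh_{n}$, confirming that the multiplicities account for a full eigenbasis (which is consistent with $\fqsym$ being cocommutative, so that part iv of Theorem \ref{thm:tob2r-evectors-cocommutative} applies). The eigenvalues themselves read off immediately from Theorem \ref{thm:tob2r-evectors-cocommutative}.i: $\beta_{j}=j/n$ for $\ter_{n}$ and $\beta_{j}=q_{2}^{n-j}$ for $\binter_{n}(q_{2})$, with $j\in[0,n-2]\cup\{n\}$.

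There is no real obstacle here; the only subtlety worth noting is the exclusion of $j=n-1$ (for which the formula would give multiplicity $1!-0!=0$ anyway), and the fact that the top eigenvalue $1$ corresponds to the uniform stationary distribution already identified after Theorem \ref{thm:stationarydistribution}.
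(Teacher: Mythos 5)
Your proposal is correct and follows exactly the route the paper intends: the paper offers no separate argument beyond citing Theorem \ref{thm:tob2r-evectors-cocommutative}.i, whose ``in particular'' clause (applicable since $\calb_{1}=\{(1)\}$, so $\dim\calh_{1}=1$) gives the multiplicity $\dim\calh_{n-j}-\dim\calh_{n-j-1}=(n-j)!-(n-j-1)!$ directly, with the eigenvalues read off from the same statement. Your explicit coefficient extraction from $(1-x)(1-y)^{-1}\sum_{n}n!\,x^{n}$ and the telescoping check are just a spelled-out verification of that clause, so the approaches coincide.
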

Observe that the multiplicity $(n-j)!-(n-j-1)!$ is precisely the
number of permutations in $\sn$ fixing pointwise $1,2,\dots,j$ but
not $j+1$.  Indeed, we can associate an eigenvector of eigenvalue
$\beta_{j}$ to each such permutation $\tau$:
\begin{thm}[Eigenbasis of top-to-random-with-standardisation chains]
 \label{thm:spectrum-fqsym}Given a permutation $\tau\in\sn$, let
$j+1$ be the smallest number in $\{1,2,\dots n\}$ not fixed pointwise
by $\tau$. So the number $i$ defined by $\tau_{i}=j+1$ satisfies
$i>j+1$. Define the function $\f_{\tau}:\sn\rightarrow\mathbb{R}$
by 
\[
\f_{\tau}(\sigma)=\begin{cases}
1 & \mbox{if }(\sigma_{j+1}\sigma_{j+2}\dots\sigma_{n})\mbox{ is in the same relative order as }(\tau_{j+1}\tau_{j+2}\dots\tau_{n});\\
-1 & \mbox{if }(\sigma_{j+1}\sigma_{j+2}\dots\sigma_{n})\mbox{ is in the same relative order as ((}j+1)\tau_{j+1}\tau_{j+2}\dots\tau_{i-1}\tau_{i+1}\tau_{i+2}\dots\tau_{n});\\
0 & \mbox{otherwise.}
\end{cases}
\]
Then $\f_{\tau}$ is a right eigenfunction of top-to-random-with-standardisation
and binomial-top-to-random-with-standardisation with eigenvalue $\beta_{j}$,
and $\{\f_{\tau}|\tau\in\sn\}$ is a basis.
\end{thm}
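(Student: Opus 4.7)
The plan is to apply the recursive lumping of Theorem~\ref{thm:recursivelumping-fqsym} to reduce the claim to an elementary two-term cancellation on a smaller symmetric group, then handle the basis property by a support argument combined with dimension counting.

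First, $\f_\tau$ is a pull-back: its value at $\sigma$ depends only on $\std(\sigma_{j+1},\ldots,\sigma_n)$, which is precisely $D^j(\sigma)$ in the notation of Section~\ref{sec:partsof1} (where $\Delta_{1,n-1}(\sigma)=(1)\otimes D(\sigma)$). So $\f_\tau=\f'_{\tilde\tau}\circ D^j$, where $\tilde\tau:=\std(\tau_{j+1},\ldots,\tau_n)\in\mathfrak{S}_{n-j}$ satisfies $\tilde\tau_1\neq 1$ (when $j<n$) and $\f'_{\tilde\tau}$ is the analogous two-valued function on $\mathfrak{S}_{n-j}$. By Theorem~\ref{thm:recursivelumping-fqsym}, pulling back along $D^j$ converts an eigenfunction of eigenvalue $\mu$ for the $\frac{j}{n}$-lazy $\ter_{n-j}$ (resp.\ for $\binter_{n-j}(q_2)$) into an eigenfunction of eigenvalue $\frac{j}{n}+(1-\frac{j}{n})\mu$ (resp.\ $\mu$) for the chain on $\mathfrak{S}_n$. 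So it suffices to show $\ter_k\f'_{\tilde\tau}=0$ for $k=n-j$ and every $\tilde\tau\in\mathfrak{S}_k$ with $\tilde\tau_1\neq 1$. The binomial case then follows from the expansion $\binter_k(q_2)=\sum_r\binom{k}{r}q_2^{k-r}(1-q_2)^r\trer_k$, together with the expression of $\trer_k$ as a polynomial in $\ter_k$ with zero constant term, which is valid on $\fqsym$ by the remark after Proposition~\ref{prop:tob2r-poly} since $\dim\fqsym_1=1$. The boundary case $\tau=\id$, $j=n$ is immediate, as $\f_\tau\equiv 1$ is automatically a $1$-eigenfunction of any stochastic chain.

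The heart of the verification is a two-term cancellation: $\tilde\tau'$ is obtained from $\tilde\tau$ by deleting the letter $1$ at position $i$ and re-inserting it at position $1$, so deleting the $1$ from either $\tilde\tau$ or $\tilde\tau'$ yields the \emph{same} word $(\tilde\tau_1,\ldots,\tilde\tau_{i-1},\tilde\tau_{i+1},\ldots,\tilde\tau_k)$ of length $k-1$. A one-step transition of $\ter_k$ assigns probability $1/k$ to $\rho$ precisely when deleting the $1$ from $\rho$ and standardising gives $\std(\tilde\sigma_2,\ldots,\tilde\sigma_k)$, and zero otherwise, so $K_{\ter_k}(\tilde\sigma,\tilde\tau)=K_{\ter_k}(\tilde\sigma,\tilde\tau')$ for every $\tilde\sigma$. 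Hence $(\ter_k\f'_{\tilde\tau})(\tilde\sigma)=K_{\ter_k}(\tilde\sigma,\tilde\tau)-K_{\ter_k}(\tilde\sigma,\tilde\tau')=0$.

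For the basis claim, eigenfunctions with distinct eigenvalues are independent, so it suffices to check independence within each $\beta_j$-eigenspace. Fixing $j<n$, the functions $\{\f_\tau\}$ correspond bijectively to $\{\tilde\tau\in\mathfrak{S}_{n-j}:\tilde\tau_1\neq 1\}$. Each $\f'_{\tilde\tau}$ attains the value $+1$ only at $\tilde\sigma=\tilde\tau$, whose first entry is not $1$, whereas its $-1$ values occur only at $\tilde\sigma$ with $\tilde\sigma_1=1$. Evaluating any purported linear relation at an $\tilde\sigma$ with $\tilde\sigma_1\neq 1$ therefore isolates the single coefficient of $\f'_{\tilde\sigma}$ and forces it to vanish. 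The identity $1+\sum_{j=0}^{n-2}\big((n-j)!-(n-j-1)!\big)=n!=\dim\fqsym_n$ completes the proof. The only place requiring careful bookkeeping is the matching of laziness factors in the reduction step, in particular that the binomial recursive lumping is \emph{not} lazy, so that the eigenvalue on $\mathfrak{S}_n$ comes out directly as $q_2^{n-j}$ rather than some weighted combination.
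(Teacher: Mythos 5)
Your proof is correct, but it takes a genuinely different route from the paper's. The paper works in the dual Hopf algebra $\fqsym^{*}$: it exhibits the kernel element $p=\bar{\tau}^{*}-(1\bar{\tau}_{1}\dots\bar{\tau}_{\bar{i}-1}\bar{\tau}_{\bar{i}+1}\dots\bar{\tau}_{n-j})^{*}\in\ker\Delta_{1,n-j-1}$, feeds it into the general eigenvector construction of Theorem \ref{thm:tob2r-evectors-cocommutative} via the Doob correspondence (Proposition \ref{prop:doobtransform-efns}.R), and proves independence by the triangularity $\f_{\tau}(\tau)=1$, $\f_{\tau'}(\tau)=0$ within each fixed $j$. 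You instead push the problem down through the recursive lumping (Theorem \ref{thm:recursivelumping-fqsym}) and observe on $\mathfrak{S}_{n-j}$ that the two columns of the $\ter_{n-j}$ transition matrix indexed by $\tilde{\tau}$ and by its move-$1$-to-front image coincide, so the difference of their indicators is a $0$-eigenfunction; note that your column identity is exactly the dual formulation of the paper's kernel condition, so the cancellation at the heart of both arguments is the same, but your scaffolding (lumping, plus the polynomial expression of $\trer_{k}$ in $\ter_{k}$ from the remark after Proposition \ref{prop:tob2r-poly}, which applies since $\dim\fqsym_{1}=1$, and which transfers to transition matrices because $\eta\equiv1$) is elementary and bypasses the PBW/primitive machinery entirely. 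What the paper's route buys is uniformity: the same theorem drives the tree chain of Section \ref{sec:cktrees} and produces eigenvectors in $\fqsym^{*}$ that can be transported to subquotients; what your route buys is a transparent probabilistic explanation of the $\pm1$ shape of $\f_{\tau}$ and of why the binomial eigenvalue is $q_{2}^{n-j}$ with no laziness correction. Two small points: the sentence ``converts an eigenfunction of eigenvalue $\mu$ for the $\frac{j}{n}$-lazy $\ter_{n-j}$ into an eigenfunction of eigenvalue $\frac{j}{n}+(1-\frac{j}{n})\mu$'' should say ``for $\ter_{n-j}$ itself'', since the lazy (lumped) chain already has eigenvalue $\frac{j}{n}+(1-\frac{j}{n})\mu$ and pullback along a lumping preserves the eigenvalue; and in the binomial step you could avoid the polynomial relation altogether, because the same two-column cancellation applies verbatim to $\trer_{n-j}$ for every $r\geq1$ (deleting the letters $1,\dots,r$ from $\tilde{\tau}$ and from $\tilde{\tau}'$ yields the same word), leaving only the $r=0$ term $q_{2}^{n-j}\id$ in the expansion of $\binter_{n-j}(q_{2})$.
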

The proof is at the end of this section.
\begin{example}
Let $\tau=12534$, so $j=2$ and $i=4$ (because $\tau_{4}=3$). Then
$\f_{\tau}(\sigma)$ is 1 if the last three letters of $\sigma$ are
in the same relative order as $534$, i.e. ``high then low then middle''.
And $\f_{\tau}(\sigma)$ is $-1$ if the last three letters of $\sigma$
are in the same relative order as $354$, i.e. ``low then high then
middle''. For example $\f_{\tau}(35{\color{red}412})=1$, $\f_{\tau}(24{\color{red}153})=1$,
$\f_{\tau}(25{\color{red}431})=0$.
\end{example}
The eigenbasis of Theorem \ref{thm:spectrum-fqsym} yields the probability
distribution of the position of the newest task amongst the bottom
$n-j$ tasks, or the last card that was touched in those bottom $n-j$
cards.
\begin{cor}
\label{cor:probestimate-fqsym} After $t$ iterates of top-to-random
(resp. binomial-top-to-random), with or without standardisation, starting
from the identity permutation, the probability that the smallest value
among $\{\sigma_{j+1},\sigma_{j+2},\dots,\sigma_{n}\}$ is $\sigma_{j+k}$
is 
\[
\begin{cases}
\frac{1}{n-j}\left(1+\beta_{j}^{t}(n-j-1)\right) & \mbox{ if }k=1;\\
\frac{1}{n-j}\left(1-\beta_{j}^{t}\right) & \mbox{ if }k\neq1;
\end{cases}
\]
where $\beta_{j}=\frac{j}{n}$ (resp. $\beta_{j}=q_{2}^{n-j}$).\end{cor}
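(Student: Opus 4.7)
The plan is to apply the recursive lumping theorem (Theorem~\ref{thm:recursivelumping-fqsym}) and reduce to a direct computation on a chain over $n-j$ letters. The event that $\sigma_{j+k}$ is the smallest value amongst $\sigma_{j+1},\ldots,\sigma_n$ depends only on the standardisation of $(\sigma_{j+1},\ldots,\sigma_n)$, so by that theorem it has the same probability when computed in the lumped chain on $\mathfrak{S}_{n-j}$: this is the $\tfrac{j}{n}$-lazy $\ter_{n-j}$ chain in the top-to-random case, and the $\binter_{n-j}(q_2)$ chain in the binomial case. Starting from $\id\in\sn$, the lumped chain starts from $\id\in\mathfrak{S}_{n-j}$, and our event translates to ``the letter $1$ is at position $k$ at time $t$''.

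For either lumped chain on $m=n-j$ letters, the position of the letter $1$ is easy to describe: by the to-do list relabelling convention, $1$ always labels the most recently inserted letter, so its position at time $t$ equals the position at which the most recent insertion across all $t$ steps was placed, if any occurred. The product rule in $\fqsym$ gives $(1)\cdot w=(1)\shuffle(w\text{ with all letters increased by }1)$, which shows that each individual insertion places the new $1$ at a uniformly chosen position of the resulting $m$-letter permutation. For $\binter_m(q_2)$ the most recent insertion corresponds to the outermost factor of the iterated product $(1)\cdot[(1)\cdot[\cdots\cdot w]]$, and hence the same uniformity holds. If no insertion occurs in any of the $t$ steps, then $1$ remains at its initial position $1$.

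It remains to compute $p_0$, the probability of no insertions across $t$ steps. For the $\tfrac{j}{n}$-lazy $\ter_m$ chain each step is lazy with probability $\tfrac{j}{n}$, giving $p_0=(j/n)^t=\beta_j^t$. For $\binter_m(q_2)$ a step performs zero insertions with probability $q_2^m=q_2^{n-j}$, giving $p_0=q_2^{(n-j)t}=\beta_j^t$. Combining,
\[
P(1\text{ at position }1)=p_0+(1-p_0)\cdot\tfrac{1}{n-j}=\tfrac{1}{n-j}\bigl(1+\beta_j^t(n-j-1)\bigr),
\]
and $P(1\text{ at position }k)=(1-p_0)/(n-j)=(1-\beta_j^t)/(n-j)$ for $k\neq 1$, matching the claim. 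The ``without standardisation'' case then follows from Theorem~\ref{thm:multistepprobofshuffles}, which equates the time-$t$ distributions on $\sn$ of the standardised and unstandardised chains when started from the identity. The one real subtlety is the uniformity-of-insertion claim for $\binter_m(q_2)$, which requires unpacking the insertion order implicit in the iterated product interpretation of $\trer_n$, but presents no deeper obstacle.
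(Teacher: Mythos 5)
Your proof is correct, but it is not the route the paper's own proof takes: the paper proves Corollary \ref{cor:probestimate-fqsym} by summing the eigenfunctions $\f_{\tau}$ of Theorem \ref{thm:spectrum-fqsym} over suitable $\tau$ to build the functions $\f_{1}$ and $\f_{k}$ (which are, up to affine shifts, indicators of the events in question), and then applying Proposition \ref{prop:eigenfunction-expectation} to their expectations. Your argument --- lump via Theorem \ref{thm:recursivelumping-fqsym} to the $\frac{j}{n}$-lazy $\ter_{n-j}$ (resp.\ $\binter_{n-j}(q_2)$) chain, note that the letter $1$ is uniformly positioned as soon as one reinsertion has occurred, and compute the no-reinsertion probability $\beta_j^t$ --- is exactly the elementary alternative the paper itself records in the remark immediately following its proof, so you have independently rediscovered that route rather than the main one. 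The trade-off: the eigenfunction proof illustrates the machinery the paper is selling (the explicit eigenbasis plus expectations of right eigenfunctions) and extends to other pattern statistics of the bottom $n-j$ entries, whereas your argument is shorter, self-contained, and makes the ``without standardisation'' transfer via Theorem \ref{thm:multistepprobofshuffles} explicit, but is tailored to the position-of-the-minimum statistic. Your flagged subtlety for $\binter$ does check out: in the expansion of $m\Delta_{1^r,n-r}$ every placement of the labels $1,\dots,r$ (uniform choice of the $r$-subset of positions together with a uniform assignment of labels to it) occurs with coefficient $1$, so the marginal position of the label $1$ is uniform on $\{1,\dots,n-j\}$, and since the number removed at each step is independent of positions, conditioning on later steps having no insertions does not disturb this.
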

\begin{proof}
Fix $j$. Let $\bar{\sigma}=\std(\sigma_{j+1}\dots\sigma_{n})\in\mathfrak{S}_{n-j}$,
so the desired probabilities are of $\bar{\sigma}_{k}=1$.

Consider first the case $k=1$. Let $\f_{1}=\sum\f_{\tau}$, summing
over all $\tau$ that fix $1,2,\dots,j$ pointwise, but not $j+1$.
We show 
\[
\f_{1}(\sigma)=\begin{cases}
-(n-j-1) & \mbox{if }\bar{\sigma}_{1}=1;\\
1 & \mbox{otherwise.}
\end{cases}
\]
If $\bar{\sigma}_{1}\neq1$ (i.e. $\sigma_{j+1}$ is not the smallest
amongst $\sigma_{j+2},\dots,\sigma_{n}$), then the only summand contributing
to $\f(\sigma)$ is $\tau=12\dots j(\bar{\sigma}_{1}+j)(\bar{\sigma}_{2}+j)\dots(\bar{\sigma}_{n-j}+j)$,
and $\f_{\tau}(\sigma)=1$. If $\bar{\sigma}_{1}=1$, then the only
non-zero summands in $\f_{1}(\sigma)$ come from $\tau$ of the form
$12\dots j(\bar{\sigma}_{2}+j)\dots(\bar{\sigma}_{\bar{i}}+j)(1+j)(\bar{\sigma}_{\bar{i}+1}+j)\dots(\bar{\sigma}_{n-j}+j)$,
where $\bar{i}\in(2,n-j)$. Hence there are $n-j-1$ terms all contributing
$-1$.

Writing $\mathbbm{1}$ for an indicator function, we have 
\begin{align}
\Prob(\bar{\sigma}_{1}=1) & =\Expect(\mathbbm{1}\{\bar{\sigma}_{1}=1\})\nonumber \\
 & =\Expect\left(\frac{1}{n-j}(1-\f_{1})\right)\nonumber \\
 & =\frac{1}{n-j}\left(1-\beta_{j}^{t}\f_{1}(x_{0})\right)\nonumber \\
 & =\frac{1}{n-j}\left(1+\beta_{j}^{t}(n-j-1)\right).\label{eq:prob-fqsym}
\end{align}
(The third equality uses the linearity of expectations and that $\f_{1}$
is an eigenvector with eigenvalue $\beta_{j}$.)

For $k>1$, consider $\f_{k}=\sum\f_{\tau}$ summing over all $\tau$
that fix $1,2,\dots,j$ pointwise, and satisfy $\tau_{k+j}=j+1$.
Then 
\[
\f_{k}(\sigma)=\begin{cases}
1 & \mbox{ if }\bar{\sigma}_{k}=1;\\
-1 & \mbox{ if }\bar{\sigma}_{1}=1;\\
0 & \mbox{ otherwise},
\end{cases}
\]
with the contributions in the first two cases coming from $\tau=12\dots j(\bar{\sigma}_{1}+j)(\bar{\sigma}_{2}+j)\dots(\bar{\sigma}_{n-j}+j)$
and $\tau=12\dots j(\bar{\sigma}_{2}+j)\dots(\bar{\sigma}_{k}+j)(1+j)(\bar{\sigma}_{k+1}+j)\dots(\bar{\sigma}_{n-j}+j)$
respectively. So 
\[
\Prob(\bar{\sigma}_{k}=1)-\Prob(\bar{\sigma}_{1}=1)=\Expect(\f_{k})=\beta_{j}^{t}\f_{k}(x_{0})=-\beta_{j}^{t},
\]
and substitute for $\Prob(\bar{\sigma}_{1}=1)$ from (\ref{eq:prob-fqsym}).
\end{proof}
\begin{rem*}

Corollary \ref{cor:probestimate-fqsym} also follows from the following
elementary argument. By the recursive lumpings of Theorem \ref{thm:recursivelumping-fqsym},
it suffices to calculate the distribution of the card/task labelled
1 under a $\frac{j}{n}$-lazy version of the chain driven by $\ter_{n-j}$,
or under the chain driven by $\binter_{n-j}(q_{2})$. Note that, because
a reinserted card is relabelled 1 and placed in a uniformly chosen
position, the card labelled 1 would be uniformly distributed in position
at time $t$ as long as at least one reinsertion has happened by time
$t$. Let $P(t)$ be the probability that no reinsertion has happened
by time $t$; then card 1 is at the top at time $t$ with probability
$P(t)+(1-P(t))\frac{1}{n-j}$, and in any other position with probability
$(1-P(t))\frac{1}{n-j}$. To complete the proof, 
\[
P(t)=P(1)^{t}=\begin{cases}
\left(\frac{j}{n}\right)^{t} & \mbox{ for the }\frac{j}{n}\mbox{-lazy version of the chain driven by }\ter_{n-j};\\
\left(q_{2}^{n-j}\right)^{t} & \mbox{ for the chain driven by }\binter_{n-j}(q_{2}).
\end{cases}
\]

\end{rem*}
\begin{proof}[Proof of Theorem \ref{thm:spectrum-fqsym}]
 To calculate right eigenfunctions, we must work in the dual $\fqsym^{*}$,
where $\Delta_{1,n-1}$ is the removal of the letter 1 (followed by
standardisation) and $m_{1,n-1}(\bullet^{*}\otimes\sigma^{*})$ is
the sum over all permutations whose last $n-1$ letters standardise
to $\sigma^{*}$.

Given $\tau,j$ as in the theorem statement, let $\bar{\tau}=\std(\tau_{j+1}\dots\tau_{n})\in\mathfrak{S}_{n-j}$.
Then the $\bar{i}$ with $\bar{\tau}{}_{\bar{i}}=1$ satisfies $\bar{i}>1$.
Note that, in $\fqsym^{*}$, we have $\Delta_{1,n-j-1}(\bar{\tau}^{*})=(\bar{\tau}_{1}\dots\bar{\tau}_{\bar{i}-1}\bar{\tau}_{\bar{i}+1}\dots\bar{\tau}{}_{n-j})^{*}=\Delta_{1,n-j-1}(1\bar{\tau}_{1}\dots\bar{\tau}_{\bar{i}-1}\bar{\tau}_{\bar{i}+1}\dots\bar{\tau}{}_{n-j})^{*}$.
So $p:=\bar{\tau}^{*}-(1\bar{\tau}_{1}\dots\bar{\tau}_{\bar{i}-1}\bar{\tau}_{\bar{i}+1}\dots\bar{\tau}{}_{n-j})^{*}\in\ker\Delta_{1,n-j-1}^{*}$
(and $p$ is nonzero because $\bar{i}>1$), and thus, by Proposition
\ref{prop:doobtransform-efns}.R and Theorem \ref{thm:tob2r-evectors-cocommutative},
a right eigenfunction is given by 
\begin{align*}
\f_{\tau}(\sigma) & =m_{1,\dots1,n-j}(\bullet^{*}\otimes\dots\otimes\bullet^{*}\otimes p)\mbox{ evaluated on }\mbox{\ensuremath{\sigma}}\\
 & =(\bullet^{*}\otimes\dots\otimes\bullet^{*}\otimes p)\mbox{ evaluated on }\Delta_{1,\dots,1,n-j}\mbox{\ensuremath{\sigma}}\\
 & =p(\std(\sigma_{j+1}\sigma_{j+2}\dots\sigma_{n}))\\
 & =\begin{cases}
1 & \mbox{if }\std(\sigma_{j+1}\sigma_{j+2}\dots\sigma_{n})=\bar{\tau};\\
-1 & \mbox{if }\std(\sigma_{j+1}\sigma_{j+2}\dots\sigma_{n})=1\bar{\tau}_{1}\dots\bar{\tau}_{\bar{i}-1}\bar{\tau}_{\bar{i}+1}\dots\bar{\tau}{}_{n};\\
0 & \mbox{otherwise.}
\end{cases}
\end{align*}
To show that $\{\f_{\tau}|\tau\in\sn\}$ form a basis of eigenfunctions,
it suffices to show that the $\f_{\tau}$ for a fixed $j$ are linearly
independent. Indeed, $\f_{\tau}(\tau)=1$ and $\f_{\tau'}(\tau)=0$
for any $\tau'\neq\tau$ with the ``same $j$'' as $\tau$ (i.e.
the smallest number not fixed by $\tau'$ is also the smallest number
not fixed by $\tau$.)
\end{proof}

\newcommand{\etalchar}[1]{$^{#1}$}


\end{document}